\documentclass{amsart}

\usepackage{amsmath,amsfonts,amssymb,amscd,comment,euscript, mathtools}
\usepackage{etoolbox}
\usepackage{stmaryrd}
\usepackage[all]{xy}
\usepackage{graphicx}
\usepackage{enumerate, enumitem} 
\usepackage[breaklinks, colorlinks=true, linktocpage]{hyperref} 
\usepackage{pifont}
\usepackage[usenames,dvipsnames]{xcolor}
\usepackage{tikz-cd, rotating}
\usepackage{tikz}
\usetikzlibrary{arrows.meta}
\usetikzlibrary{decorations.markings}
\usetikzlibrary{calc}
\usepackage{xcolor}
\usepackage{soul}
\usepackage{stmaryrd} 

\usepackage[maxbibnames=9,backend=bibtex]{biblatex}
\usepackage{amsthm}
\usepackage{thmtools}
\usepackage[capitalize]{cleveref}   

\renewcommand\rightmark{\footnotesize Algebraic cobordism rings of wonderful varieties and matroids}

\theoremstyle{plain}
\newtheorem{lem}{Lemma}[section]
\newtheorem{cor}[lem]{Corollary}
\newtheorem{prop}[lem]{Proposition}

\newtheorem{theo}[lem]{Theorem}

\theoremstyle{definition}
\newtheorem{egg}[lem]{Example}
\newtheorem{rmk}[lem]{Remark}
\newtheorem{defn}[lem]{Definition}

\newcommand{\arxiv}[1]{\href{http://arxiv.org/abs/#1}{\tt arXiv:\nolinkurl{#1}}}
\newcommand{\arxivpdf}[1]{\href{http://arxiv.org/pdf/#1}{\tt arXiv:\nolinkurl{#1}}}

\newcommand{\mrm}{\mathrm}

\newcommand{\mbb}{\mathbb}
\newcommand{\mcal}{\mathcal}
\newcommand{\CH}{\mathrm{CH}}
\newcommand{\cone}{\mathrm{cone}}

\begin{document}

\title[]{Algebraic cobordism rings of wonderful varieties and matroids}
\author[]{Raj Gandhi and Ethan Partida}

\address{
    Department of Mathematics \\
    Cornell University
}
\email{rg593@cornell.edu}

\address{
    Department of Mathematics \\
    Brown University
}
\email{ethan\_partida@brown.edu}

\begin{abstract}
    We give two combinatorial presentations for the algebraic cobordism ring $\Omega^*(M)$ of the toric variety of the Bergman fan of any loopless matroid $M$. As a consequence of our presentations, we obtain an $\Omega^*(\mrm{pt})$-algebra isomorphism $\Omega^*(M) \simeq \CH^*(M) \otimes_{\mbb{Z}}  \Omega^*(\mrm{pt})$, where $\CH^*(M)$ is the Chow ring of $M$ and $\Omega^*(\mrm{pt})$ is the algebraic cobordism ring of the point. This isomorphism generalizes, in part, the exceptional integral isomorphism between the Chow ring and $K$-ring of a matroid, studied in the recent works of Berget--Eur--Spink--Tseng and Larson--Li--Payne--Proudfoot. For a complex hyperplane arrangement $\mcal{H}$, we prove that the algebraic cobordism ring of the wonderful variety $W_\mcal{H}$ of $\mcal{H}$ and the algebraic cobordism ring of the toric variety of the matroid underlying $\mcal{H}$ are isomorphic, and that both rings coincide with the complex cobordism ring of $W_\mcal{H}$.
\end{abstract}
	
\maketitle
\tableofcontents

\section{Introduction}
    Let $\mcal{H}$ be an arrangement of hyperplanes in a complex vector space $L$ whose intersection is the origin in $L$. The arrangement $\mcal{H}$ gives rise to a loopless matroid $M_\mcal{H}$ whose flats are indexed by the various intersections of hyperplanes in $\mcal{H}$. De Concini and Procesi defined a smooth compactification $W_{\mcal{H}}$ of the complement of the hyperplane arrangement in $L$, known as the \textit{wonderful compactification} of $\mcal{H}$ \cite{DP95}, whose boundary is a simple normal crossings divisor. Much of the geometry of $W_{\mcal{H}}$ is dictated by invariants of the matroid $M_{\mcal{H}}$. As $W_{\mcal{H}}$ is a smooth, projective variety, such invariants of $M_{\mcal{H}}$ must satisfy strong structural properties. Sometimes these properties can be given a purely combinatorial explanation. This, in turn, lets one prove new structural results about matroids, even those not realizable over any field.

    Given a  loopless matroid $M$, there is a (usually non-compact) toric variety $X_M$ canonically associated to $M$ whose Chow ring $\CH^*(X_M)$ has an explicit description \cite{FY04}. Feichtner and Yuzvinsky proved that when $M_{\mcal{H}}$ is the matroid of a complex hyperplane arrangement $\mcal{H}$, there is a canonical inclusion $W_{\mcal{H}} \hookrightarrow X_{M_\mcal{H}}$ which induces an isomorphism of Chow rings $\CH^*(X_{M_\mcal{H}})\simeq \CH^*(W_\mcal{H})$ \cite{FY04}. As the cycle class map is an isomorphism for $W_{\mcal{H}}$ (Lemma \ref{lem:cycle-class}), this gives the Chow ring of $X_{M_\mcal{H}}$ the structure of the cohomology ring of a smooth, complex, projective variety. An important consequence of this is that $\CH^*(X_{M_{\mcal{H}}})$ has the K\"ahler package \cite{EL}. Remarkably, Adiprasito--Huh--Katz proved that $\CH^*(X_{M})$ has the K\"ahler package for every loopless matroid $M$, regardless of whether or not $M$ is realizable \cite{AHK18}. This result is a key step in their proof of the Heron--Rota--Welsh conjecture \cite{AHK18}.

    In recent work \cite{LLPP24}, Larson--Li--Payne--Proudfoot studied the Grothendieck ring of algebraic vector bundles on $W_{\mcal{H}}$, which we denote by $K^0(W_{\mcal{H}})$ (following the notation of Levine and Morel \cite{LM07}), and refer to as the $K$-ring  of $W_{\mcal{H}}$. They proved that the inclusion $W_{\mcal{H}} \hookrightarrow X_{M_{\mcal{H}}}$ induces an isomorphism of $K$-rings $K^0(X_{M_\mcal{H}})\simeq K^0(W_\mcal{H})$. The authors also proved that, for any loopless matroid $M$, the $K$-ring of $X_M$ admits structures such as Adams operations, an Euler characteristic map, and Serre duality. Surprisingly, they also showed that the Chow ring and $K$-ring of $X_M$ are integrally isomorphic. This isomorphism is known as an \emph{exceptional isomorphism} in the literature and was first proven in the Boolean case by Berget--Eur--Spink--Tseng \cite{BEST}. The exceptional isomorphism is not directly related to the Chern character isomorphism between the rational $K$-ring and the rational Chow ring of $W_\mcal{H}$.

    The functors $\CH^*(-)$ and $K^0(-)$ which send a smooth complex variety to its Chow ring and $K$-ring are examples of \emph{oriented cohomology theories} (\Cref{subsection:cobordism}). There is a universal oriented cohomology theory $\Omega^*(-)$ called \emph{algebraic cobordism} constructed by Levine and Morel \cite{LM07}. Levine and Morel's result is an analogue of the fundamental observation of Quillen that complex cobordism $MU^*(-)$ is the universal complex-oriented cohomology theory \cite{Q71}. For a smooth complex variety $X$, there is a natural morphism $\Omega^*(X)\to MU^*(X)$ which is an isomorphism when $X$ is a point, but is not an isomorphism in general (\Cref{rmk:cobordismComparison}). For any free oriented cohomology theory $h^*(-)$, of which $\CH^*(-)$ and $K^0(-)$ are examples, the natural morphism $\Omega^*(X)\to h^*(X)$ is surjective and $h^*(X)$ can be recovered from $\Omega^*(X)$ (\Cref{dfn:free-theory}). Computations of algebraic cobordism rings are generally very difficult and explicit ring presentations are uncommon. Some of the few families of varieties for which presentations of their algebraic cobordism rings are known are smooth toric varieties \cite{KU13}, complete flag varieties \cite{HK11, CPZ}, Pﬁster quadrics \cite{VY7}, and wonderful symmetric varieties of minimal rank \cite{KK13}. We refer the reader to Section \ref{section:formal-group-laws-and-algebraic-cobordism} for an introduction to algebraic cobordism. 
    
    In this paper, we study the algebraic cobordism rings of wonderful compactifications $W_{\mcal{H}}$ and toric varieties of Bergman fans of loopless matroids $X_M$. Our first result is a ``simplicial'' presentation of the ring $\Omega^*(X_M)$. In this presentation, the generators of $\Omega^*(X_M)$ are lifts of the ``simplicial generators" for $\CH^*(X_M)$, introduced by Backman--Eur--Simpson \cite{BES24}, to the algebraic cobordism ring of $X_M$. We derive the simplicial presentation from careful manipulations of the universal formal group laws that appear in the presentation of $\Omega^*(X_M)$ deduced from Krishna and Uma's description of the algebraic cobordism ring of a smooth toric variety \cite{KU13}. In the following, we use $\mrm{pt}= \operatorname{Spec}(\mbb{C})$ and refer the reader to Sections \ref{section:wonderful-varieties} and \ref{section:formal-group-laws-and-algebraic-cobordism} for undefined terminology. 

    \begin{theo}\label{theo:intro1}(\cref{thrm:simplicial})
        Let $M$ be a loopless matroid. The algebraic cobordism ring of $X_M$ is isomorphic to
        \[\frac{\Omega^*(\mrm{pt})[h_G: \text{ $G$ a non-empty flat of $M$}]}{\left\langle (h_G-h_{G\vee H})(h_H-h_{G\vee H}): G,H \text{ non-empty flats of $M$}\right\rangle+\left\langle h_G: G \text{ atom}\right\rangle }  \]
        as an $\Omega^*(\mrm{pt})$-algebra. This presentation yields an isomorphism of $\Omega^*(\mrm{pt})$-algebras:
          \begin{equation}\label{eqn:simplicial}
        \Omega^*(X_M)\simeq \mrm{CH}^*(X_M)\otimes_{\mbb{Z}} \Omega^*(\mrm{pt}).
        \end{equation}
    \end{theo}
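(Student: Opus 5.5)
The plan is to start from the Krishna--Uma presentation of the algebraic cobordism ring of a smooth toric variety \cite{KU13}, applied to $X_M$. In that presentation $\Omega^*(X_M)$ is generated over $\Omega^*(\mrm{pt})$ by the classes $x_F$ of the torus-invariant divisors, one for each ray, i.e.\ each non-empty proper flat $F$. The relations are of two kinds. First, $x_{F_1}\cdots x_{F_k}=0$ whenever $\{F_1,\dots,F_k\}$ is not a chain. Second, for every character $\chi$ of the torus there is a linear-type relation, now expressed through the universal formal group law:
\[\sum^{F}_{\rho}[\langle \chi, v_\rho\rangle]_F(x_\rho)=0 .\]
Here $[n]_F$ denotes the $n$-series and $\sum^F$ the formal-group sum. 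For the Bergman fan, $v_F=e_F=\sum_{i\in F}e_i$, and it is convenient to take the characters $e_i^*-e_j^*$.

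Next, I would mimic the Backman--Eur--Simpson change of variables. In the Chow ring one sets $h_G=-\sum_{F\supseteq G}x_F$, summing over proper flats. In cobordism I would instead define
\[h_G := \textstyle[-1]_F\Big(\sum^F_{F\supseteq G,\,F\neq E} x_F\Big),\]
using the formal group sum and formal inverse; here $h_E$ should be $0$ or a suitable hyperplane class $\alpha$, chosen according to the normalization in \cite{BES24}. With this definition the linear relation for $e_i^*-e_j^*$ reads $h_{\{i\}}=h_{\{j\}}$, since $\langle e_i^*-e_j^*,e_F\rangle\in\{0,\pm1\}$ and no higher $n$-series appear. So the relations $h_{\text{atom}}=0$ (after the appropriate normalization) should come from the linear relations. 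One then inverts: the $x_F$ are recovered from the $h_G$ by formal-group subtractions along covers, via Möbius inversion done in the formal group. The main obstacle is the step that follows. One must show that the monomial relations $x_Fx_{F'}=0$ for incomparable $F,F'$ generate the same ideal as the quadrics $(h_G-h_{G\vee H})(h_H-h_{G\vee H})$.

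My approach to that obstacle is as follows. Observe that $h_G$ and $h_{G\vee H}$ differ by the formal sum of the $x_F$ with $F\supseteq G$ but $F\not\supseteq H$, and similarly with $G$ and $H$ swapped. A formal group difference $a-_Fb$ equals $(a-b)\cdot u$ with $u$ a unit in the completed ring, and here the ring is graded and nilpotent in positive degree. Hence $h_G-h_{G\vee H}$ equals a unit times the ordinary sum $\sum x_F$ over $F\supseteq G$, $F\not\supseteq H$, plus terms of the same support. Any flat $F\supseteq G$ with $F\not\supseteq H$ and any flat $F'\supseteq H$ with $F'\not\supseteq G$ are incomparable. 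Therefore the product of the two differences lies in the monomial ideal. Conversely, an induction on rank / the poset recovers each $x_Fx_{F'}$ from these products, exactly as in \cite{BES24}, with the units absorbed. I would argue this by filtering by degree and using a graded Nakayama-type argument: the associated graded map reduces modulo $\mathbb{L}^{<0}$ to the known Chow statement.

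For the isomorphism \eqref{eqn:simplicial}, note that the presentation above is the Chow presentation of \cite{BES24} with $\mbb{Z}$ replaced by $\Omega^*(\mrm{pt})$. Its relations have integer coefficients, with no formal group law coefficients remaining. Base change therefore gives $\CH^*(X_M)\otimes_{\mbb{Z}}\Omega^*(\mrm{pt})$ immediately.
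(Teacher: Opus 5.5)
Your proposal follows the paper's proof. The shared steps are: the Krishna--Uma presentation with the characters $e_i^*-e_j^*$; a formal-group version of the Backman--Eur--Simpson change of variables, inverted by M\"{o}bius inversion in the formal group; the unit argument of \cref{lem:formal_dif} for $\phi(\mcal{J}_1^M)\subseteq\mcal{I}_1^M$ (your index set $F\supseteq G$, $F\not\supseteq H$ is the correct one); the induction on coranks for the reverse containment; and base change using nilpotency of the generators.

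The only real difference is normalization. The paper adjoins a variable $x_E$ (\cref{cor:second-FY-presentation}) and includes $-_{F_U}x_E$ in every $h_G$, so the variable counts match and $h_G=0$ holds on the nose for atoms. With your choice $h_E=0$, the linear relations only give $h_a=h_{a'}$ for atoms $a,a'$, so you must finish with the integral linear shift $h_G\mapsto h_G-h_{a_0}$, which leaves every quadric unchanged, to reach the stated presentation.
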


    The isomorphism (\ref{eqn:simplicial}) is unexpected, and it immediately implies that there is an integral ring isomorphism $K^0(X_M)\xrightarrow{\sim} \CH^*(X_M)$ (\Cref{cor:exceptional-explanation}). In fact, for any free oriented cohomology theory $\mrm{h}^*(-)$, we have that $\mrm{h}^*(X_M)$ is isomorphic to $\Omega^*(X_M) \otimes_{\Omega^*(\mrm{pt})} \mrm{h}^*(\mrm{pt})$ as an $\mrm{h}^*(\mrm{pt})$-algebra. Thus, the isomorphism (\ref{eqn:simplicial}) can be viewed as a generalization of the exceptional isomorphism $K^0(X_M)\xrightarrow{\sim} \CH^*(X_M)$. The only other varieties $X$ that the authors are aware of which have an $\Omega^*(\mrm{pt})$-isomorphism $\Omega^*(X)\simeq \mrm{CH}^*(X)\otimes_{\mbb{Z}}\Omega^*(\mrm{pt})$ are projective space \cite{LM07}, del Pezzo surfaces \cite[Proposition 4.1]{DZ26}, and complete flag varieties \cite[Theorem 1.1]{HK11}. Even if $X$ is a smooth, projective toric variety, there can fail to be such an isomorphism. Indeed, Speyer \cite{Speyer} gives an example of a smooth, projective toric variety $X$ such that $\mrm{CH}^*(X)$ and $K^0(X)$ are not isomorphic as rings, and from this it follows that $\Omega^*(X)$ and $\mrm{CH}^*(X)\otimes_{\mbb{Z}}\Omega^*(\mrm{pt})$ are not isomorphic as $\Omega^*(\mrm{pt})$-algebras. This example suggests that the isomorphism (\ref{eqn:simplicial}) is very special. It would be interesting to have a more systematic study of the varieties $X$ for which $\Omega^*(X)\simeq \mrm{CH}^*(X)\otimes_{\mbb{Z}}\Omega^*(\mrm{pt})$. For completeness of our exposition, we have included Speyer's example as \cref{egg:smooth-projective-toric} in this article. 

    As our next main result, we prove that the inclusion $W_{\mcal{H}} \hookrightarrow X_{M_{\mcal{H}}}$ induces an isomorphism of algebraic cobordism rings $\Omega^*(X_{M_\mcal{H}})\simeq \Omega^*(W_\mcal{H})$.
    \begin{theo}\label{theo:intro2}(\cref{theo:pullback-in-cobordism})
        Let $W_{\mcal{H}}$ be the wonderful compactification of a hyperplane arrangement $\mcal{H}$ and $M_{\mcal{H}}$ its associated matroid. The inclusion $\iota: W_{\mcal{H}} \hookrightarrow X_{M_{\mcal{H}}}$ induces an isomorphism of $\Omega^*(\mrm{pt})$-algebras:
        \[\iota^*: \Omega^*(X_{M_{\mcal{H}}})\xrightarrow{\sim} \Omega^*(W_{\mcal{H}}).  \]
    \end{theo}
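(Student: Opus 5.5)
The plan is to deduce the statement from the Chow-ring isomorphism of Feichtner--Yuzvinsky together with a structural lemma on algebraic cobordism: a map of $\Omega^*(\mrm{pt})$-algebras that becomes an isomorphism after applying $-\otimes_{\Omega^*(\mrm{pt})}\mbb{Z}$ is itself an isomorphism, provided source and target are suitably free or finitely generated. Both $X_{M_\mcal{H}}$ and $W_\mcal{H}$ are smooth. By Levine--Morel, for any smooth $X$ the canonical map $\Omega^*(X)\otimes_{\Omega^*(\mrm{pt})}\mbb{Z}\to \CH^*(X)$ is an isomorphism. This map is natural in $X$, so it is compatible with $\iota^*$. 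Hence $\iota^*\otimes\mbb{Z}$ identifies with $\CH^*(X_{M_\mcal{H}})\to\CH^*(W_\mcal{H})$, which is an isomorphism by \cite{FY04}.

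\textbf{Surjectivity.} Let $C=\operatorname{coker}(\iota^*)$. Right exactness of $\otimes$ gives $C\otimes_{\Omega^*(\mrm{pt})}\mbb{Z}=0$, that is, $C=\mbb{L}^{<0}C$, where $\mbb{L}^{<0}=\Omega^{<0}(\mrm{pt})$ is the augmentation ideal of the Lazard ring. The module $C$ is graded, and $\Omega^*(W_\mcal{H})$ vanishes in degrees above $\dim W_\mcal{H}$. So a graded Nakayama argument applies: an element of $C$ of maximal degree $d$ would have to be a sum of products $a\cdot c$ with $a\in\mbb{L}^{<0}$ and $c$ of degree $>d$, which forces it to be $0$. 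Inducting downward in degree, $C=0$. Here I use that $\Omega^*(W_\mcal{H})$ is generated as an $\Omega^*(\mrm{pt})$-module by finitely many classes of bounded degree; this holds because it is generated by lifts of any generating set of $\CH^*$.

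\textbf{Injectivity.} This is the main obstacle, since a kernel is not controlled by right exactness. My plan is to use \Cref{theo:intro1}: $\Omega^*(X_{M_\mcal{H}})\simeq \CH^*(X_{M_\mcal{H}})\otimes_{\mbb{Z}}\Omega^*(\mrm{pt})$, which is a free $\Omega^*(\mrm{pt})$-module on a basis of $\CH^*(X_M)$ (free over $\mbb{Z}$ by the known Gröbner basis of Feichtner--Yuzvinsky). Choose a homogeneous $\mbb{Z}$-basis $\{b_i\}$ of $\CH^*(X_{M_\mcal{H}})$ and lifts $\tilde b_i$. By surjectivity, the elements $\iota^*\tilde b_i$ generate $\Omega^*(W_\mcal{H})$.

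It then suffices to show that $\Omega^*(W_\mcal{H})$ is free on these generators. For this I would invoke the fact that $W_\mcal{H}$ has an affine paving, or more directly, that it is obtained from $\mbb{P}(L)$ by iterated blow-ups along smooth centers (strict transforms of the flats). The blow-up formula for algebraic cobordism then expresses $\Omega^*(W_\mcal{H})$ as a free $\Omega^*(\mrm{pt})$-module whose rank equals the rank of $\CH^*(W_\mcal{H})$ over $\mbb{Z}$. Alternatively, I can use the cycle-class/comparison argument: $\Omega^*(W_\mcal{H})\to MU^*(W_\mcal{H})$ is an isomorphism for cellular varieties, and $MU^*$ of a space with even-cell, torsion-free cohomology is free over $MU^*$ of rank equal to that of $H^*$.

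Given freeness of equal finite rank with compatible degrees, $\iota^*$ is a surjection between free graded $\Omega^*(\mrm{pt})$-modules with the same graded rank. In each degree both sides are finitely generated free $\mbb{Z}$-modules of equal rank, so a surjection between them is an isomorphism. This gives injectivity. Finally, $\iota^*$ is a ring map and $\Omega^*(\mrm{pt})$-linear, so it is an isomorphism of $\Omega^*(\mrm{pt})$-algebras. The step needing the most care is establishing freeness of $\Omega^*(W_\mcal{H})$. My first attempt will be the blow-up formula applied along the De Concini--Procesi construction.
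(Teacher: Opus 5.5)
Your argument is correct, but it runs differently from the paper's.

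\textbf{Surjectivity.} You reduce modulo $\mbb{L}^{<0}$ to the Feichtner--Yuzvinsky isomorphism and apply graded Nakayama to the cokernel. The paper instead shows by hand that toric divisors pull back to the $D_G$ (\cref{lem:intersection}) and that products of the $[D_G]$ generate (\cref{lem:lifts-generate}). These rest on the same downward-induction mechanism. For the cokernel, boundedness above $\dim W_{\mcal{H}}$ already suffices, so your finite-generation remark is not needed.

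\textbf{Injectivity.} This is where the routes genuinely differ. The paper never proves directly that $\Omega^*(W_{\mcal{H}})$ is free. It composes $\iota^*$ with $\vartheta$ into $MU^{2*}(W_{\mcal{H}}(\mbb{C}))$. Freeness of that target comes from topology: torsion-free cohomology plus Stong's Atiyah--Hirzebruch lemma (\cref{lem:ahssApplication}), and the cycle-class isomorphism (\cref{lem:cycle-class}, via Keel's blow-up lemma). Freeness of the source comes from \cref{thrm:simplicial}. As a by-product the paper gets $\Omega^*(W_{\mcal{H}})\simeq MU^{2*}(W_{\mcal{H}}(\mbb{C}))$.

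You stay purely algebraic. The price is an additive blow-up formula for $\Omega^*$ along smooth centers, which the paper never uses and which you must cite. It is available, for instance, by lifting Manin's blow-up decomposition of Chow motives to cobordism motives, or from blow-up results such as \cite{DZ26}. Your induction on dimension works because, as in the proof of \cref{lem:cycle-class}, each center is a disjoint union of wonderful varieties of restricted arrangements.

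Once $\Omega^*(W_{\mcal{H}})$ is known to be free, you do not even need \cref{thrm:simplicial}. The surjection $\iota^*$ onto a free graded module splits. Its kernel $K$ then satisfies $K\otimes_{\mbb{L}^*}\mbb{Z}=0$, because the split sequence stays exact after tensoring and the Chow pullback is injective. Since $K$ is bounded above, $K=0$ by your Nakayama argument.

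\textbf{What to drop.} Remove the affine-paving alternative and the ``cellular'' comparison $\Omega^*\to MU^*$. An affine paving of $W_{\mcal{H}}$ for an arbitrary arrangement is not a standard fact, and nothing in your argument requires it.
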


    Along the way to proving \Cref{theo:intro2}, we prove that $\Omega^*(W_{\mcal{H}})$ is isomorphic, as an $\Omega^*(\mrm{pt})$-algebra, to the complex cobordism ring $MU^*(W_{\mcal{H}})$ of $W_{\mcal{H}}$ (\Cref{theo:pullback-in-cobordism}). Comparisons between the complex and algebraic cobordism rings of $W_{\mcal{H}}$ are a key technical aspect of our proof of \Cref{theo:intro2}. 
    
    In a very recent preprint, Debnath and Zeng give a presentation of the oriented cohomology ring of the blowup of a smooth scheme along a smooth center \cite{DZ26}. As $W_\mcal{H}$ is an iterated blowup of projective space and $\Omega^*(-)$ is an oriented cohomology theory, it should be possible to derive a presentation of the algebraic cobordism ring of $W_H$ from Debnath and Zeng's result. However, such a ring presentation will likely be a few steps removed from the presentation of $\Omega^*(W_\mcal{H})$ given by Theorems \ref{theo:intro1} and \ref{theo:intro2}. In fact, the analogous translation between two different presentations of the Chow ring of $W_\mcal{H}$ is one of the main results of \cite{FY04}.
    
    We close by noting that the class $[W_{\mcal{H}}\to \mrm{pt}]$ of the wonderful compactification in the algebraic (and also complex) cobordism ring of a point only depends on the matroid $M_{\mcal{H}}$ of $\mcal{H}$. This follows from results of Cheng on the Chern numbers of the tangent bundle of $W_{\mcal{H}}$ \cite{Cheng25}. In future work, the first author plans to give a recursive formula for the class of $W_{\mcal{H}}$ in $\Omega^*(\mrm{pt})$ using recent techniques of Ellis-Bloor \cite{EB26}, who gives a recursive formula for the class of $\overline{\mcal{M}_{0,n}}$ in $\Omega^*(\mrm{pt})$. Obtaining such a formula can be seen as the first step towards explicitly describing the proper push-forward map $\Omega^*(W_\mcal{H})\to \Omega^*(\mrm{pt})$. See \cref{rmk:future-work} for further details. 

    This paper is organized as follows. In \cref{section:wonderful-varieties}, we recall matroids, wonderful compactifications, and Bergman fans. In \cref{section:formal-group-laws-and-algebraic-cobordism}, we recall the theory of formal group laws and oriented cohomology theories.  In \cref{section:algebraic-cobordism-XM}, we describe the simplicial presentation of $\Omega^*(X_M)$ and prove \Cref{theo:intro1}. In \cref{cobordism-of-wonderful}, we prove \Cref{theo:intro2} and study the class of $[W_{\mcal{H}}\to \mrm{pt}]$ in both the algebraic and complex cobordism rings of a point.

    \subsection*{Acknowledgments}
    We thank Kenneth Blakey, Melody Chan, Liam Keenan, Carly Klivans, Allen Knutson, Matt Larson, Marc Levine, Shiyue Li, Ruizhen Liu, Anubhav Nanavaty, and Burt Totaro for helpful conversations and correspondences, and we thank the anonymous referees for their remarks on an extended abstract of this document submitted to the conference ``Formal Power Series and Algebraic Combinatorics". We thank Sam Payne for pointing out an error with \cref{egg:smooth-projective-toric} in an earlier draft of this paper. We also thank David E Speyer for discussing \cref{egg:smooth-projective-toric} with us. R.G. was partially supported by a Postgraduate Scholarship - Doctoral - from the Natural Sciences and Engineering Research Council of Canada. E.P. was partially supported by NSF Grant DMS-2053288, a U.S. Department of Education GAANN award, and the Simons Foundation SFI-MPS-SDF-00015018.

\section{Recollection of wonderful varieties and matroids}\label{section:wonderful-varieties}
    In \cref{subsection:matroids-and-hyperplane-arrangements}, we recall matroids and hyperplane arrangements. In \cref{subsection:wonderful}, we recall the wonderful compactifications of De Concini-Procesi. In \cref{subsection:bergman}, we introduce the conventions for fans and toric varieties that we will follow in this paper and recall the Bergman fan of a matroid.
    
\subsection{Matroids and hyperplane arrangements}\label{subsection:matroids-and-hyperplane-arrangements}
    We will recall the definition of a matroid, and describe the matroid corresponding to a hyperplane arrangement.

    \begin{defn}
        Let $E$ be a finite set. A loopless \textbf{matroid} $M$ on the ground set $E$ is a collection $\mcal{F}$ of subsets of $E$ called \textbf{flats} that satisfy:
        \begin{itemize}
            \item $\emptyset\in \mcal{F}$.
            \item If $G,H\in \mcal{F}$, then $G\cap H\in\mcal{F}$.
            \item For any $G\in\mcal{F}$ and any $i\in E\setminus G$, there is a unique, inclusion-minimal flat $H\in \mcal{F}$ containing $i$ and $G$.
        \end{itemize}
        For two flats $G$ and $H$, their \textbf{meet} $G\wedge H$ is the intersection $G\cap H$ and their \textbf{join} $G\vee H$ is the smallest flat containing $G\cup H$. Two flats $H$ and $G$ are called \textbf{incomparable} if $H\not\subseteq G$ and $G\not\subseteq H$. If $H$ and $G$ are not incomparable, then they are \textbf{comparable}.
        When ordered by inclusion, the set $\mcal{F}$ forms a geometric lattice.  The \textbf{rank} $\mrm{rk}(F)$ of a flat $F$ is equal to the length of the largest chain of comparable  flats properly contained in $F$.  An \textbf{atom} is a flat of rank $1$. It follows from the axioms that $E\in \mcal{F}$. Let $r$ be the rank of $E$. In this case, we say that $M$ has rank $r$. The \textbf{corank} of a flat $G$ of $M$ is the number $r-\mrm{rk}(G)$.
    \end{defn}

   \begin{defn}
      A complex, central and essential \textbf{hyperplane arrangement} $\mcal{H}$ is a multiset of hyperplanes $\{H_i\}_{i\in E}$ in a complex vector space $L$ indexed by a finite set $E$ such that $\bigcap_{i\in E} H_i =\{0\}$. 
   \end{defn}

    In this paper, all hyperplane arrangements will be complex, central and essential. We henceforth drop the extra adjectives. 

    Every hyperplane arrangement $\mcal{H}=\{H_i\}_{i\in E}$ determines a loopless matroid $M_{\mcal{H}}$ on the ground set $E$. For a subset $S\subseteq E$, let $L_S$ be the linear subspace $L_S = \bigcap_{i\in S} H_i$. The set of flats of $M_{\mcal{H}}$ is equal to
    \[\mcal{F}_{\mcal{H}} = \{ F \subseteq E : \text{ if } i\in E\setminus F, \text{ then } L_{F\cup i} \neq L_F\}. \]
    In this case, the rank of a flat $F$ is equal to the codimension of $L_F$ in $L$. 

    For an integer $n\geq 1$, let $[n] = \{1,2,\ldots, n\}$.
    \begin{egg}
        Let $\mcal{H}$ be an arrangement of generic hyperplanes indexed by a finite set $E$ in an $r$-dimensional vector space $L$. The flats of $M_\mcal{H}$ consist of all subsets $S\subseteq E$ with size strictly less than $r$ along with the set $E$. In this case, we refer to $M_\mcal{H}$ as the \textbf{uniform matroid} of rank $r$ on the ground set $E$. If $r=|E|$, then we say that $M_\mcal{H}$ is the \textbf{Boolean matroid} on the ground set $E$. 
        When $E=[n]$, then we denote $M_{\mcal{H}}$ by $U_{r,n}$.
    \end{egg}
    
    \begin{egg}\label{egg:U23}
        Consider the hyperplane arrangement $\mcal{H}=\{H_1,H_2,H_3\}$ in $\mbb{C}^2$, where $H_1=\mrm{span}_{\mbb{C}}\{(1,-1)\}$, $H_2=\mrm{span}_{\mbb{C}}\{(1,1)\}$, and $H_3=\mrm{span}_{\mbb{C}}\{(1,0)\}$. The matroid underlying this arrangement is $M_{\mcal{H}}=U_{2,3}$. This matroid has a single flat of rank $2$, corresponding to the intersection $H_1\cap H_2\cap H_3$, which is the origin in $\mbb{C}^2$. There are three flats of rank $1$, which correspond to the hyperplanes $H_1$, $H_2$, and $H_3$. Finally, there is a single flat of rank $0$, corresponding to the empty intersection, i.e., the ambient vector space $\mbb{C}^2$. We illustrate this example in Figure \ref{fig:hyperplane-arrangement} below.
        
        \begin{figure}[h]
        \begin{tikzpicture}[scale=0.5]
            \draw[black, thick] (-2, 0) node[left] {$H_3$} -- (2, 0) ;
            \draw[black, thick] (-2, -2) node[left] {$H_2$}-- (2, 2) ;
            \draw[black, thick] (-2, 2) node[left] {$H_1$} -- (2, -2)  ;
            \filldraw[black] (0,0) circle (2pt);
        \end{tikzpicture}  \quad\quad\quad\quad\quad\quad \begin{tikzpicture}[scale=.5]
          \node (one) at (0,2) {$\{1,2,3\}$};
          \node (a) at (-2,0) {$\{1\}$};
          \node (b) at (0,0) {$\{2\}$};
          \node (c) at (2,0) {$\{3\}$};
          \node (zero) at (0,-2) {$\emptyset$};
          \draw (zero) -- (a) -- (one) -- (b) -- (zero) -- (c) -- (one);
        \end{tikzpicture}\caption{On the left, a hyperplane arrangement whose corresponding matroid is $U_{2,3}$, and on the right, the lattice of flats of $U_{2,3}$.
        }\label{fig:hyperplane-arrangement}
        \end{figure}
    \end{egg}

\subsection{Wonderful varieties of hyperplane arrangements}\label{subsection:wonderful}
    We recall the definition of the wonderful compactification of a hyperplane arrangement $\mcal{H}$ with respect to the maximal building set. Wonderful compactifications were introduced in the work of de Concini and Procesi \cite{DP95}. There, they define wonderful compactifications with respect to any choice of \emph{building set}. A building set is a subset of the flats of $M_{\mcal{H}}$ which obeys certain axioms. The set $\mcal{F}_{\mcal{H}}\setminus \emptyset$ is always a building set and is known as the maximal building set. Wonderful compactifications, as we define them here, will always be with respect to the maximal building set.
    
    Let $\mcal{H}$ be a hyperplane arrangement in a complex vector space $L$ whose hyperplanes are indexed by a finite set $E$. Now projectivise each hyperplane $H_i$ in $\mcal{H}$ to obtain a collection of hyperplanes $\mbb{P}\mcal{H}= \{\mbb{P}(H_i)\}_{i\in E}$ inside the projective space $\mbb{P}(L)$. The wonderful variety $W_{\mcal{H}}$ is a smooth compactification of the open subset $\mbb{P}(L)\setminus \bigcup_{i\in E}\mbb{P}(H_i)$ of $\mbb{P}(L)$, whose boundary is a simple normal crossings divisor. We will now discuss the construction of $W_\mcal{H}$. 

    \begin{defn}
        The \textbf{wonderful variety} $W_{\mcal{H}}$ of a hyperplane arrangement $\mcal{H}$ can be constructed as an iterated blowup of $\mbb{P}(L)$. 
        \begin{enumerate}
            \item First, blow up $\mbb{P}(L)$ at $\mbb{P}(L_G)$ for all corank $1$ flats $G$ in $M_\mcal{H}$. Call the resulting variety $W_{\mcal{H}}^{(1)}$.
            \item Next, blow up $W_{\mcal{H}}^{(1)}$ at the strict transforms of $\mbb{P}(L_G)$ for all corank $2$ flats $G$ in $M_\mcal{H}$. Call the resulting variety $W_{\mcal{H}}^{(2)}$.
            \item Continue in this way to obtain $W_{\mcal{H}}^{(k)}$ for each $k=1,\ldots,r-1$. This results in a sequence of blow-ups
            \[\pi \colon W_{\mcal{H}}^{(r-1)}\to W_{\mcal{H}}^{(r-2)}\to \dotsb W_{\mcal{H}}^{(2)}\to W_{\mcal{H}}^{(1)}\to \mbb{P}(L)\]
            and we define $W_{\mcal{H}}=W_{\mcal{H}}^{(r-1)}$.
            \end{enumerate}  
    \end{defn}

    \begin{theo}(\cite{DP95})
        The iterated blow-up $\pi\colon W_{\mcal{H}}\to \mbb{P}(L)$ is a smooth compactification of $\mbb{P}(L)\setminus \bigcup_{i\in E}\mbb{P}(H_i)$ whose boundary is a simple normal crossings divisor. The irreducible components of this divisor are indexed by the proper non-empty flats of $M_{\mcal{H}}$.
    \end{theo}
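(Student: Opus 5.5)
The plan is to argue by induction along the tower of blowups $W^{(k)}_{\mcal{H}}\to W^{(k-1)}_{\mcal{H}}$, keeping track of the strict transforms of all the projectivized subspaces $\mbb{P}(L_F)$. The invariant to carry through the induction is the following \emph{wonderful arrangement} condition. At stage $k$, consider the collection consisting of:
\begin{itemize}
\item the exceptional divisors $D_G$, for flats $G$ of corank at most $k$;
\item the strict transforms $\widetilde{\mbb{P}(L_F)}$, for flats $F$ of corank greater than $k$.
\end{itemize}
We want $W^{(k)}_{\mcal{H}}$ to be smooth, every member of the collection to be smooth, and the members to meet in the following controlled way. Two strict transforms $\widetilde{\mbb{P}(L_F)}$ and $\widetilde{\mbb{P}(L_{F'})}$ intersect cleanly, in the strict transform of $\mbb{P}(L_{F\vee F'})$; this is empty if $F\vee F'$ has corank at most $k$, since that locus has already been blown up. A divisor $D_G$ and a strict transform $\widetilde{\mbb{P}(L_F)}$ meet transversally when $G$ and $F$ are comparable, and have empty intersection when $G$ and $F$ are incomparable.

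The base case $k=0$ is just $\mbb{P}(L)$ with its linear subspaces. The subspace $\mbb{P}(L_F)$ has codimension $\mrm{rk}(F)$, and $L_F\cap L_{F'}=L_{F\vee F'}$, so distinct subspaces intersect cleanly.

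For the inductive step, the key input is a local fact about blowing up a smooth center. Let $Z\subset Y$ be smooth and $V\subset Y$ smooth.
\begin{itemize}
\item If $V$ and $Z$ intersect cleanly, then the strict transform of $V$ is the blowup of $V$ along $V\cap Z$. In particular it is smooth.
\item If $V\supseteq Z$, the strict transform of $V$ meets the exceptional divisor transversally.
\item If $V\cap Z$ is a proper clean intersection that is neither $V$ nor $Z$, the strict transforms separate exactly along the locus that was blown up.
\end{itemize}
At stage $k$ we blow up the strict transforms of $\mbb{P}(L_G)$ for all flats $G$ of corank $k$. By the invariant these centers are pairwise disjoint: for two distinct corank-$k$ flats, $G\vee G'$ has smaller corank, so its locus has already been removed. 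The simultaneous blowup is therefore well defined and smooth. The local fact then propagates the invariant. In particular, a corank-$k$ flat $G$ and a flat $F$ of larger corank are incomparable exactly when the strict transform of $\mbb{P}(L_F)$ meets the center without containing it. In that case the intersection is the locus of $F\vee G$, of corank at most $k$, which the blowup separates.

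At the end ($k=r-1$) all members of the collection are divisors $D_G$, one for each proper non-empty flat $G$, and we must check they form a simple normal crossings divisor. It suffices to show that a family $\{D_{G_i}\}$ has non-empty intersection only if the $G_i$ form a chain, and that for a chain the intersection is transverse. Transversality at each point comes from the successive transverse intersections recorded in the invariant, computed in local coordinates adapted to the flag $L_{G_1}\supset L_{G_2}\supset\cdots$. We also check that $W_{\mcal{H}}\setminus\bigcup D_G$ maps isomorphically onto $\mbb{P}(L)\setminus\bigcup_i\mbb{P}(H_i)$. This holds because every blowup center lies over $\bigcup_i \mbb{P}(H_i)$. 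Conversely, the hyperplanes themselves contribute divisors: the strict transform of $\mbb{P}(H_i)$ is $D_{\mrm{cl}(i)}$, the divisor of the corank $r-1$ atom flat, which is blown up at the last stage (a divisor blowup is an isomorphism, so it just names the component). Since $\pi$ is projective and $\mbb{P}(L)$ is complete, $W_{\mcal{H}}$ is a compactification.

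The main obstacle is the clean-intersection bookkeeping in the inductive step. The delicate case is when two higher-corank strict transforms meet along a locus that itself gets blown up later. There one must verify that cleanness is preserved, using the lattice identity $L_F\cap L_{F'}=L_{F\vee F'}$ and the fact that joins of the relevant flats have strictly smaller corank. I would handle this with explicit local coordinates: choose a basis of $L^\vee$ adapted to a maximal chain of flats, where all centers are coordinate subspaces. The blowup then becomes a standard toric-type chart computation.
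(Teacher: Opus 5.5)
The paper gives no proof of this statement; it is quoted from De Concini--Procesi. They construct $W_{\mcal{H}}$ as a closure in $\mbb{P}(L)\times\prod_G\mbb{P}(L/L_G)$ and cover it by explicit affine charts attached to maximal chains of flats and adapted bases. Your inductive route therefore has to stand on its own. The peripheral parts are fine: the open stratum, compactness, and the identification of the atom divisors with the strict transforms of the $\mbb{P}(H_i)$. The trouble is that the inductive step, which carries all the content, is never carried out, and part of what you say about it is wrong.

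\textbf{The case analysis.} If $\mrm{corank}(G)=k<\mrm{corank}(F)$ and $F\not\subseteq G$, then $F\vee G\supsetneq G$ has corank $<k$. By your own invariant at stage $k-1$, the strict transform of $\mbb{P}(L_F)$ then misses the center altogether; it does not meet it ``without containing it.'' The cases that actually need proof are two:
\begin{itemize}
\item the center lies inside both $\widetilde{\mbb{P}(L_F)}$ and $\widetilde{\mbb{P}(L_{F'})}$ with $F\vee F'\subsetneq G$;
\item the position of the center relative to the divisors $D_{G'}$, $G\subsetneq G'$, that already exist.
\end{itemize}

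\textbf{The invariant is only pairwise.} Pairwise transversality does not give simple normal crossings, so the final inference ``transversality at each point comes from the successive transverse intersections recorded in the invariant'' is invalid as it stands. For example, in $\mbb{C}^3$ the line $Z=\{x=y,\ z=0\}$ is transverse to $\{x=0\}$ and to $\{y=0\}$ separately. Yet in the chart $z=(x-y)z_1$ of the blowup of $Z$, the exceptional divisor $\{x=y\}$ and the strict transforms $\{x=0\}$, $\{y=0\}$ all contain the line $\{x=y=0\}$. When you blow up $\widetilde{\mbb{P}(L_{G_1})}$, you need it to be transverse to every stratum $D_{G_2}\cap\dots\cap D_{G_m}$. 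That has to be proved; it cannot be read off from pairwise data.

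\textbf{The coordinate fix does not exist.} Unless $M$ is Boolean, there is no basis of $L^\vee$, and not even a local coordinate system, in which all centers are coordinate subspaces. If $F$ is a rank-$2$ flat containing three atoms, the three hyperplanes through $\mbb{P}(L_F)$ are never simultaneously coordinate hyperplanes near a point of $\mbb{P}(L_F)$, since their common intersection has codimension $2$. A basis adapted to a chain coordinatizes only that chain, while the delicate case you single out involves incomparable $F,F'$.

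\textbf{The missing idea is a localization step.} Suppose $q\in W^{(k)}_{\mcal{H}}$ lies on exactly the divisors $D_{G_1},\dots,D_{G_m}$; by your disjointness statements, $G_1\subsetneq\dots\subsetneq G_m$ is a chain.
\begin{itemize}
\item Every earlier blowup whose center is not indexed by some $G_i$ is an isomorphism near the image of $q$. Indeed, later centers meet an exceptional divisor transversally or not at all, so its total preimage is its strict transform. Hence if the image of $q$ lay on the center indexed by $G$, then $q\in D_G$.
\item Every $\widetilde{\mbb{P}(L_F)}$ through $q$ has $F\subsetneq G_1$.
\end{itemize}
So near $q$ the tower is just the iterated blowup of the flag $\mbb{P}(L_{G_m})\subset\dots\subset\mbb{P}(L_{G_1})$. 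A flag together with the next center and two subspaces containing its largest member \emph{can} be made simultaneously coordinate. Only after this reduction does your toric chart computation become legitimate, both for propagating the invariant and for the simple normal crossings property of the $D_G$ at the end.
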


    \begin{defn}\label{defn:divisor-in-W}
        For each flat $G$ of  $M_{\mcal{H}}$, let $U_G$ be the open locus $U_G=\mbb{P}(L_G)\setminus \bigcup_{G\subsetneq H\neq E}\mbb{P}(L_H)$ and define the divisor $D_G$ to be the closure of the preimage $\pi^{-1}(U_G)$ in $W_\mcal{H}$.
    \end{defn}

    \begin{egg}\label{egg:U23wonderful}
        Consider the hyperplane arrangement $\mcal{H}$ of \cref{egg:U23}, whose underlying matroid is $M_\mcal{H}=U_{2,3}$. We have $L=\mbb{C}^2$, and $\mbb{P}(L)=\mbb{P}^1$. The hyperplanes $\mbb{P}(H_1)$, $\mbb{P}(H_2)$, and $\mbb{P}(H_3)$ are three distinct points in $\mbb{P}(L)$. Observe that the blow-up of $\mbb{P}^1$ at a point is isomorphic to $\mbb{P}^1$. As $W_{\mcal{H}}$ is the blow-up of $\mbb{P}^1$ at three distinct points, it follows that $W_\mcal{H}\simeq \mbb{P}^1$. In this case, $W_\mcal{H}\simeq \mbb{P}^1$ is a smooth compactification of $\mbb{P}(L)\setminus\{\mbb{P}(H_1),\mbb{P}(H_2),\mbb{P}(H_3)\}$, whose boundary is the simple normal crossings divisor $D_{G_1}+D_{G_2}+D_{G_3}$, where $D_{G_i}=\mbb{P}(H_i)$. We illustrate $W_\mcal{H}$, together with the boundary divisor, in Figure \ref{fig:wonderful} below.
        \begin{figure}[h]
        \begin{tikzpicture}[scale=0.5]
        \draw[black, thick] (-8, 0) -- (8, 0) ;
        \filldraw[black] (-4,0) circle (2pt);
        \node[above] at (-4,0) {$\mbb{P}(H_1)$} ;
        \filldraw[black] (0,0) circle (2pt) ;
         \node[above] at (0,0) {$\mbb{P}(H_2)$} ;
        \filldraw[black] (4,0) circle (2pt) ;
         \node[above] at (4,0) {$\mbb{P}(H_3)$} ;
        \end{tikzpicture}
        \caption{The wonderful variety of the hyperplane arrangement $\mcal{H}$ from \cref{egg:U23}.}\label{fig:wonderful}
        \end{figure}
    \end{egg}

\subsection{Toric varieties and Bergman fans}\label{subsection:bergman}
    Let $N$ be a lattice of rank $n$ and $N_{\mbb{R}}$ be the $n$-dimensional vector space $N_{\mbb{R}}\coloneqq N \otimes_\mbb{Z} \mbb{R}$.  A $k$-dimensional, rational, simplicial cone  $\sigma \subset N_{\mbb{R}}$ is a subset that can be written as 
    \[\sigma= \cone\{v_1,\ldots, v_k\} \coloneqq \{\lambda_1v_1+\ldots + \lambda_k v_k : \lambda_i\geq 0 \}\subset N_{\mbb{R}}\] 
    where $v_1,\ldots, v_k$ are $k$ linearly independent vectors in $N$. A cone $\tau$ is a \textbf{face} of $\sigma$ if it can be written as $\tau = \cone\{v_{i_1},\ldots, v_{i_\ell}\}$ for some subset $\{v_{i_1},\ldots, v_{i_\ell}\} \subset \{v_1,\ldots, v_k\}$. We declare $\cone(\emptyset)= \{0\}$ so that $\{0\}$ is a face of every cone. A \textbf{rational, simplicial fan} $\Sigma$ in $N_{\mbb{R}}$ is a non-empty collection of rational, simplicial cones $\sigma \in \Sigma$ in $N_{\mbb{R}}$ such that
    \begin{itemize}
    \item If $\sigma \in \Sigma$ and $\tau$ is a face of $\sigma$, then $\tau \in \Sigma$, and
    \item Every two cones $\sigma, \tau \in \Sigma$ intersect along a common face.
    \end{itemize}
    We say that $\Sigma$ is \textbf{complete} if the union of all of its cones is equal to the whole vector space $N_{\mbb{R}}$. Let $\Sigma(1)$ denote the set of rays, i.e. one dimensional cones, of $\Sigma$. For a ray $\rho\in \Sigma(1)$, the \textbf{primitive ray generator} $\nu_\rho\in N$ of $\rho$ is the generator of the semigroup $N\cap \rho$. We say that $\Sigma$ is \textbf{unimodular} if, for all cones $\sigma\in \Sigma$, the set of primitive ray generators of $\sigma$ can be extended to a basis of $N$.

    Let $T$ be an $n$-dimensional complex torus, and let $N=\mrm{Hom}(\mbb{C}^*,T)$ be its lattice of one-parameter subgroups. Denote the lattice of characters $\mrm{Hom}(T,\mbb{C}^*)$ of $T$ by $N^\vee$. A rational fan $\Sigma$ in $N_{\mathbb{R}}$ gives rise to a toric variety $X_\Sigma$ with torus $T$. There is a rich dictionary between the combinatorics of $\Sigma$ and the geometry of $X_\Sigma$. We refer the reader to the textbook \cite{fultonToric} for details.

    Let $M$ be a loopless matroid on ground set $E$ and $\mbb{Z}^E$ be the lattice with basis $\{e_i : i \in E\}$. For a subset $G\subset E$, define $e_G = \sum_{i\in G} e_i \in \mathbb{Z}^E$. Let $N_E$ be the lattice $\mbb{Z}^E/\mbb{Z}e_E$ and $\overline{e}_G$ denote the image of $e_G$ in $N_E$.

    \begin{defn}\label{defn:bergman-fan}
        The \textbf{Bergman fan} $\Sigma_M$ of $M$ is the unimodular fan in $\mathbb{R}^E/\mathbb{R} e_E = N_E \otimes_\mbb{Z} \mbb{R}$ with cones
        \[\sigma_{\mcal{G}} = \text{cone}\{\overline{e}_G\}_{G\in \mathcal{G}}, \quad \text{for $\mathcal{G}$ a flag of proper and non-empty flats of $M$.} \]
        We will denote the toric variety of $\Sigma_M$ by $X_M$.
    \end{defn}

    The fan $\Sigma_M$ is a  unimodular fan and hence the toric variety $X_M$ is smooth. However, except when $M$ is the Boolean matroid, the fan $\Sigma_M$ is not complete and $X_{M}$ is not proper.
    \begin{egg}\label{egg:U23bergman}
        Let $e_1,e_2,e_3$ be the standard basis vectors in $\mbb{R}^3$. The flags of flats for $U_{2,3}$ are 
        \[\emptyset\subseteq \{1\}\subseteq \{1,2,3\},\quad \emptyset\subseteq \{2\}\subseteq \{1,2,3\},\quad \emptyset\subseteq \{3\}\subseteq \{1,2,3\}.\]
        Therefore, the Bergman fan of $U_{2,3}$ lies in $\mbb{R}^3/(e_1+e_2+e_3)$ and has cones 
        \[\sigma_1=\mrm{span}(e_1),\quad \sigma_2=\mrm{span}(e_2),\quad \sigma_3=\mrm{span}(e_3)=\mrm{span}(-e_1-e_2).\]
        The toric variety $X_{U_{2,3}}$ is $\mbb{P}^2\setminus\{\text{3 distinct points}\}$.
    \end{egg}

    Let $\mcal{H}=\{H_i\}_{i\in E}$ be a hyperplane arrangement in a complex vector space $L$ and $M_{\mcal{H}}$ be the corresponding matroid. There is an embedding $\iota\colon W_{\mcal{H}}\hookrightarrow X_{M_\mcal{H}}$ constructed in the following manner. After choosing equations for the hyperplanes of $\mcal{H}$, one obtains a map $L\hookrightarrow \mbb{C}^E$ which descends to a map $\mbb{P}(L)\hookrightarrow \mbb{P}(\mbb{C}^E)$. This map has the property that the projectivization of every hyperplane of $\mcal{H}$ is the intersection of $\mbb{P}(L)$ with the corresponding coordinate hyperplane of $\mbb{P}(\mbb{C}^E)$. Let $X_{\mcal{A}_E}$ be the variety constructed from $\mbb{P}(\mbb{C}^E)$ by first blowing up all of the coordinate points, then blowing up all of the strict transforms of the coordinate lines, and so on. The wonderful variety $W_{\mcal{H}}$ is the strict transform of $\mbb{P}(L)$ under this sequence of blowups and there is a commutative diagram
    \[\begin{tikzcd}
    	{W_{\mcal{H}}} & {X_{\mcal{A}_E}} \\
    	{\mbb{P}(L)} & {\mbb{P}(\mbb{C}^E)}
    	\arrow[hook, from=1-1, to=1-2]
    	\arrow[from=1-1, to=2-1]
    	\arrow[from=1-2, to=2-2]
    	\arrow[hook, from=2-1, to=2-2]
    \end{tikzcd}\]
    where the vertical arrows are blow down maps.
    
    The variety $X_{\mcal{A}_E}$ is the toric variety of the Bergman fan of the Boolean matroid on the ground set $E$. The toric variety $X_{M_\mcal{H}}$ is the subvariety of $X_{\mcal{A}_E}$ consisting of all torus orbits of $X_{\mcal{A}_E}$ which meet $W_{\mcal{H}}$. Consequently, the embedding $W_\mcal{H}\hookrightarrow X_{\mcal{A}_E}$ factors through an embedding $\iota: W_{\mcal{H}} \hookrightarrow X_{M_{\mcal{H}}}$. 
    
    For us, the essential information about the embedding $\iota:W_{\mcal{H}}\hookrightarrow X_{M_{\mcal{H}}}$ is the following lemma. This lemma follows from the description of $W_{\mcal{H}}$ as the strict transform of $\mbb{P}(L)$ in $X_{\mcal{A}_E}$ and the fact that $\iota$ induces an isomorphism between the Chow ring of $X_{M_{\mcal{H}}}$ and $W_{\mcal{H}}$ \cite[Corollary 2]{FY04}. 
    \begin{lem}(cf.\cite[Corollary 2]{FY04})\label{lem:intersection}
        Let $\rho_G$ be a ray of the Bergman fan of $M_{\mcal{H}}$ corresponding to a proper, non-empty flat $G$,  $D_{\rho_G}$ be the corresponding toric divisor of $X_{M_{\mcal{H}}}$, and $D_G$ be the corresponding exceptional divisor of $W_{\mcal{H}}$. The smooth subschemes $W_\mcal{H}$ and $D_{\rho_G}$ meet properly in $X_{M_{\mcal{H}}}$ and their scheme-theoretic intersection is
        \[D_{\rho_G}\cap W_{\mcal{H}}=D_G. \]
    \end{lem}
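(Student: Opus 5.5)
The plan is to work inside the ambient toric variety $X_{\mcal{A}_E}$ and then restrict to the open subvariety $X_{M_\mcal{H}}$. Since $X_{M_\mcal{H}}$ is the union of the torus orbits of $X_{\mcal{A}_E}$ that meet $W_\mcal{H}$, the divisor $D_{\rho_G}$ of $X_{M_\mcal{H}}$ is the restriction of the toric divisor of $X_{\mcal{A}_E}$ attached to the ray $\overline{e}_G$. So it suffices to compute the intersection in $X_{\mcal{A}_E}$.

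That toric divisor is the exceptional divisor $\widetilde{E}_G$ produced when the strict transform of the coordinate subspace $\mbb{P}(\mbb{C}^{E\setminus G})=\{x_i=0: i\in G\}$ is blown up. For the case $|G|=1$, it is instead the strict transform of the coordinate hyperplane. By construction, $\mbb{P}(L)\cap \mbb{P}(\mbb{C}^{E\setminus G})=\mbb{P}(L_G)$, and this holds scheme-theoretically because $L_G$ is cut out by the linear forms $x_i$, $i\in G$, restricted to $L$. The goal is then to track this equality step by step through the iterated blowups.

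The key step is the standard behaviour of blowups under strict transforms. If $Y\subset X$ is a smooth closed subvariety and $Z\subset X$ is a smooth center with $Y\cap Z$ smooth, then the strict transform $\widetilde Y$ is the blowup of $Y$ along $Y\cap Z$. Moreover, the exceptional divisor of $\mrm{Bl}_Z X$ restricts on $\widetilde Y$ to the exceptional divisor of $\mrm{Bl}_{Y\cap Z}Y$, scheme-theoretically. When $Y\cap Z$ has the expected codimension in $Y$, the intersection is transverse and proper. If $Y\cap Z=\emptyset$, or $Z$ corresponds to a non-flat subset $S$, then nothing happens on $Y$. In that case $L_S=L_{\overline{S}}$ for the closure $\overline{S}$, and the corresponding divisor misses $X_{M_\mcal{H}}$.

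We would apply this inductively, in the order of the building-set blowups. At each stage we would check the following: the strict transform of $\mbb{P}(L)$ meets the current center, namely the strict transform of $\{x_i=0,\ i\in G\}$, exactly in the strict transform of $\mbb{P}(L_G)$. This is precisely the center used in the definition of $W_\mcal{H}$. The exceptional divisor therefore pulls back to $D_G$, with multiplicity one. Properness of the intersection follows by a dimension count: $D_{\rho_G}\cap W_\mcal{H}=D_G$ has codimension one in $W_\mcal{H}$.

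The main obstacle is that the boolean blowup order (by dimension of coordinate subspaces) and the flat order (by corank) differ. One has to verify that blowing up non-flat coordinate subspaces does not affect $\mbb{P}(L)$ transversality. One also has to verify that the later blowups do not change the scheme structure of the intersection. Alternatively, one can avoid this bookkeeping, as the paper suggests, and deduce multiplicity one from \cite[Corollary 2]{FY04}. There, $\iota^*[D_{\rho_G}]=[D_G]$ in $\CH^1$, which forces the reduced, proper intersection once set-theoretic equality and smoothness are known.
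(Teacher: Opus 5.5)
Your alternative route is essentially the paper's argument. The paper also derives the lemma from the description of $W_{\mcal{H}}$ as the strict transform of $\mbb{P}(L)$ in $X_{\mcal{A}_E}$, which supplies the set-theoretic equality and properness, together with \cite[Corollary 2]{FY04}. Your multiplicity-one step is sound once you note that $\iota^*\mcal{O}(D_{\rho_G})\simeq\mcal{O}(mD_G)$ for some $m\geq 1$, and that $[D_G]$ is a nonzero element (a nonzero effective divisor on a projective variety) of the free abelian group $\CH^1(W_{\mcal{H}})$.

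The ordering obstacle you flag in your main route is harmless. The Boolean blowup order restricted to flats refines containment, and the paper itself (proof of Lemma~\ref{lem:cycle-class}) constructs $W_{\mcal{H}}$ using an arbitrary total order of this kind.
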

  
\section{Recollection of formal group laws and algebraic cobordism}\label{section:formal-group-laws-and-algebraic-cobordism}
    In \cref{subsection:formal-group}, we recall the theory of formal group laws, and in Sections \ref{subsection:cobordism} and \ref{subsection:construction}, we recall the theory of algebraic cobordism and oriented cohomology theories. 
    
\subsection{Formal group laws}\label{subsection:formal-group}
    Formal group laws are an important technical tool in the study of oriented cohomology theories. We explain in \cref{subsection:cobordism} that every oriented cohomology theory comes equipped with a formal group law which governs how the first Chern class of the tensor product of two line bundles on a smooth complex variety can be expressed in terms of the first Chern classes of the individual line bundles. The axioms defining a formal group law reflect properties of the tensor product operation on line bundles, as we explain in \cref{rmk:FGL-motivation}. In this subsection, we review the basic properties of formal group laws that we need in this article, closely following the exposition in \cite{F68}.
    
	\begin{defn}\label{defn:FGL}(\cite[Ch. III, \S 1]{F68})
		Let $R$ be a commutative ring. A one-dimensional, commutative \textbf{formal group law} $(R,F)$ is a formal power series $F(x,y)=\sum_{i,j\geq 0}a_{i,j}x^iy^j\in R\llbracket x,y\rrbracket$, such that for all $w,v,z\in R\llbracket x,y\rrbracket$:
		\begin{enumerate}
			\item\label{itm:1} $F(0,v)=v$,
			\item\label{itm:2} $F(w,v)=F(v,w)$, and
			\item\label{itm:3} $F(w,F(v,z))=F(F(w,v),z)$.
		\end{enumerate}
       
	\end{defn}

    In this article, all formal group laws are one-dimensional and commutative. For brevity, we will drop the adjectives and refer to them simply as ``formal group laws."

    \begin{rmk}\label{rmk:FGL-motivation}
        The three axioms defining a formal group law in \cref{defn:FGL} reflect the fact that, for any three line bundles $\mcal{L},\mcal{M},\mcal{N}$ on a smooth, complex variety $X$:
        \begin{enumerate}
           \item $\mcal{O}_X\otimes_{\mcal{O}_X} \mcal{L}\simeq \mcal{L}$,
            \item $\mcal{L}\otimes_{\mcal{O}_X}  \mcal{M}\simeq \mcal{M}\otimes_{\mcal{O}_X}  \mcal{L}$, and
            \item $\mcal{L}\otimes_{\mcal{O}_X} (\mcal{M}\otimes_{\mcal{O}_X} \mcal{N})\simeq (\mcal{L}\otimes_{\mcal{O}_X} \mcal{M})\otimes_{\mcal{O}_X} \mcal{N} $.
        \end{enumerate}
     \end{rmk}

    \begin{rmk}\label{rem:FGL}
        Let $F(x,y)=\sum_{i,j\geq0}a_{i,j}x^iy^j\in R\llbracket x,y\rrbracket$ be a formal group law. Axiom (\ref{itm:1}) in \cref{defn:FGL} implies that $a_{0,0}=0$ and $a_{1,0}=a_{0,1}=1$. Axiom (\ref{itm:2}) implies $a_{i,j}=a_{j,i}$ for all $i,j$. The relations among the $a_{i,j}$ arising  from Axiom (\ref{itm:3}) are complicated.
    \end{rmk}
    
    \begin{lem}(\cite[Ch. I, Section 3, Proposition 1]{F68}) \label{lem:formal_inv}
		Given a formal group law $(R,F)$, there is a unique formal power series \[-_Fx:=-x-c_2x^2-c_3x^3-\dotsb\in R\llbracket x\rrbracket\] called the \textbf{formal inverse} of $(R,F)$ that satisfies $F(x,-_Fx)=0$.
	\end{lem}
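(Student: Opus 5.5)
We will construct the inverse coefficient by coefficient. The unknown power series $g(x)=-x-c_2x^2-c_3x^3-\dotsb$ will be determined by solving $F(x,g(x))=0$ degree by degree. This equation is meaningful because $g$ has no constant term, so substituting it into $F\in R\llbracket x,y\rrbracket$ gives a well-defined series in $R\llbracket x\rrbracket$. Write $g(x)=\sum_{k\ge1} b_k x^k$. By \cref{rem:FGL}, $F(x,y)=x+y+\sum_{i,j\ge1}a_{i,j}x^iy^j$: indeed $a_{0,0}=0$, $a_{1,0}=a_{0,1}=1$, and axiom (\ref{itm:1}) gives $a_{i,0}=a_{0,i}=0$ for $i\ge2$, since $F(x,0)=x$ follows from symmetry together with $F(0,y)=y$.

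Next we expand $F(x,g(x))=x+g(x)+\sum_{i,j\ge1}a_{i,j}x^i g(x)^j$. The coefficient of $x^n$ in this series is $b_n$, plus $1$ when $n=1$, plus a polynomial expression in the $a_{i,j}$ and in $b_1,\dots,b_{n-1}$. The reason is that each term $x^ig(x)^j$ with $i,j\ge1$ has lowest degree $i+j\ge2$, so its $x^n$-coefficient involves only the $b_k$ with $k\le n-i\le n-1$.

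Setting the coefficient of $x^n$ to zero then forces $b_1=-1$ and, for $n\ge2$, $b_n=-P_n(a_{i,j},b_1,\dots,b_{n-1})$ for an explicit polynomial $P_n$. Induction on $n$ shows that this recursion has exactly one solution. This gives existence and uniqueness simultaneously, with $c_n:=-b_n\in R$.

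The only delicate point is the triangularity claim: in degree $n$ the unknown $b_n$ appears linearly with coefficient $1$, and nothing else of degree $\ge n$ appears. This follows from $a_{i,0}=a_{0,j}=0$ for $(i,j)\ne(1,0),(0,1)$, so that every mixed term has $i,j\ge1$. With this checked, the inductive construction goes through, and the associativity axiom is not needed at all.
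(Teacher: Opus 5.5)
Your proof is correct and complete. The normalization $F(x,y)=x+y+\sum_{i,j\ge1}a_{i,j}x^iy^j$ follows from axioms (1) and (2) exactly as you say. The $x^n$-coefficient of $F(x,g(x))$ is $\delta_{n,1}+b_n+P_n(a_{i,j};b_1,\dots,b_{n-1})$, and the resulting triangular recursion gives existence and uniqueness at once. In fact it gives uniqueness among all series with zero constant term, with $b_1=-1$ derived rather than assumed.

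The paper does not prove this lemma; it only cites Fr\"ohlich. Your degree-by-degree construction is the standard elementary argument, and you are right that associativity plays no role.
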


    \begin{egg}
		The \textbf{additive formal group law} over $\mbb{Z}$ is $F_A(x,y)=x+y$, and $-_{F_A}x=-x$.
	\end{egg}
	\begin{egg}
		The \textbf{multiplicative-periodic formal group law} over $\mbb{Z}[\beta,\beta^{-1}]$ is $F_M(x,y)=x+y- \beta xy$, and $-_{F_M}x=\frac{-x}{1-\beta x}$. 
	\end{egg}

    Definition \ref{defn:Lazard} below is important for us, as the \textit{Lazard ring} is the algebraic cobordism ring of a point.  
    \begin{defn}\label{defn:Lazard}
		The \textbf{Lazard ring} $\mbb{L}^*$ is the quotient of the free $\mbb{Z}$-algebra with generators $a_{i,j}$, $i,j\geq0$, modulo the relations imposed by the axioms of the formal group law. The ring $\mbb{L}^*$ is graded, with the degree of the generator $a_{i,j}$ being $1-i-j$. The \textbf{universal formal group law} is the formal group law $(\mbb{L}^*,F_U)$ defined by $F_U(x,y)=\sum_{i,j\geq 0}a_{i,j}x^iy^j\in \mbb{L}^*\llbracket x,y\rrbracket$.
	\end{defn}

    The formal group law $(\mbb{L}^*, F_U)$ is universal in the following sense. This universal property follows immediately from the construction of  $(\mbb{L}^*, F_U)$.
    
    \begin{prop}\label{rmk:Lazard-ring} 
        For every formal group law $(R,F)$, there exists a unique ring homomorphism 
        \[f:\mbb{L}^*\to R\]
        such that 
        \[f_*F_U \coloneqq \sum_{i,j\geq 0}f(a_{i,j})x^iy^j = F. \]
	\end{prop}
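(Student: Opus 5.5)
The universal property should follow directly from the definition of $\mbb{L}^*$ as a quotient of a polynomial ring. Write $P=\mbb{Z}[a_{i,j}: i,j\geq 0]$ for the free commutative $\mbb{Z}$-algebra on the symbols $a_{i,j}$. Let $I\subseteq P$ be the ideal generated by the coefficients of the formal power series
\[F_P(0,x)-x,\qquad F_P(x,y)-F_P(y,x),\qquad F_P(x,F_P(y,z))-F_P(F_P(x,y),z),\]
where $F_P(x,y)=\sum a_{i,j}x^iy^j\in P\llbracket x,y\rrbracket$. Then $\mbb{L}^*=P/I$. By \cref{defn:FGL}, it suffices to impose the axioms with $w,v,z$ equal to the variables $x,y,z$, since the general case follows by substitution. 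Each coefficient of these series is a polynomial in finitely many $a_{i,j}$, because $a_{0,0}$-type constant terms are handled by axiom (1) as in \cref{rem:FGL}.

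For existence, let $(R,F)$ be a formal group law with $F=\sum b_{i,j}x^iy^j$. By the universal property of polynomial rings there is a unique ring map $\tilde f\colon P\to R$ with $a_{i,j}\mapsto b_{i,j}$. Applying $\tilde f$ coefficientwise commutes with substitution and composition of power series, so it sends each of the three series above to the corresponding series for $F$. These vanish because $F$ satisfies the axioms. Hence $\tilde f(I)=0$, and $\tilde f$ descends to $f\colon \mbb{L}^*\to R$ with $f_*F_U=F$.

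For uniqueness, any $f$ with $f_*F_U=F$ must satisfy $f(a_{i,j})=b_{i,j}$. Since the images of the $a_{i,j}$ generate $\mbb{L}^*$ as a ring, $f$ is determined.

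The main point needing care is the claim that coefficientwise application of a ring map commutes with composition of power series. Composition $F(x,F(y,z))$ is well defined because $F$ has no constant term. Each coefficient of the composite is a finite polynomial expression in the coefficients, so a ring homomorphism preserves it. The rest is formal.
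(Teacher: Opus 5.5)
Your argument is correct and is exactly what the paper means when it says the universal property follows immediately from the construction of $(\mbb{L}^*,F_U)$. Existence comes from descending the map out of the polynomial ring, and uniqueness from the fact that the $a_{i,j}$ generate $\mbb{L}^*$.

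One small point to tidy: $F_P(x,F_P(y,z))$ is not actually defined over $P$ itself. Since $a_{0,0}$ is not zero in $P$, its constant coefficient would be the infinite sum $\sum_j a_{0,j}a_{0,0}^j$. So you should first quotient by the coefficients coming from axioms (1) and (2), which kill $a_{0,0}$, and only then form the associativity series and take its coefficients in that quotient.
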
  

    A fundamental result of Lazard gives a particularly simple presentation of $\mbb{L}^*$. This presentation is not at all apparent from our definition of $\mbb{L}^*$.

    \begin{theo}[\cite{L55}]\label{theo:Lazard-ring}
        The Lazard ring is isomorphic to a polynomial ring in infinitely many variables $\mbb{L}^*\simeq \mbb{Z}[t_1,t_2,\ldots]$ where the generator $t_{i}$ has degree $-i$.
    \end{theo}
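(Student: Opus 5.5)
The plan is to follow Lazard's original strategy: separate the problem into the structure of the indecomposables and the obstruction to lifting them. Grade $\mbb{L}^*$ by the degree in \cref{defn:Lazard}, so that $\mbb{L}^*$ is concentrated in non-positive degrees with $\mbb{L}^0=\mbb{Z}$. Write $I=\mbb{L}^{<0}$ for the augmentation ideal and $Q=I/I^2$ for the module of indecomposables. In each degree $-i$, choose an element $t_i\in\mbb{L}^{-i}$ whose image generates $Q^{-i}$. This requires first proving that $Q^{-i}\simeq\mbb{Z}$. A standard graded Nakayama argument then shows that the $t_i$ generate $\mbb{L}^*$ as a ring, giving a surjection $\mbb{Z}[t_1,t_2,\ldots]\to\mbb{L}^*$. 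For injectivity, I would construct a formal group law over $\mbb{Z}[t_1,t_2,\ldots]$ and use the universal property of \cref{rmk:Lazard-ring} to obtain a map back. The two maps are then compared degree by degree.

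To compute $Q$, I would work modulo decomposables. Write $F(x,y)=x+y+\sum a_{ij}x^iy^j$, and look at the component of total degree $n=i+1$. Modulo $I^2$, the associativity axiom (\ref{itm:3}) linearizes to a 2-cocycle condition: the homogeneous degree-$n$ part $\Gamma(x,y)$ must satisfy $\Gamma(y,z)-\Gamma(x+y,z)+\Gamma(x,y+z)-\Gamma(x,y)=0$. Commutativity (\ref{itm:2}) additionally forces $\Gamma$ to be symmetric. The key step, which I expect to be the main obstacle, is \emph{Lazard's symmetric 2-cocycle lemma}. Over any ring $A$, every homogeneous symmetric degree-$n$ 2-cocycle is an $A$-multiple of
\[
C_n(x,y)=\tfrac{1}{d_n}\bigl((x+y)^n-x^n-y^n\bigr),
\]
where $d_n=p$ if $n$ is a power of a prime $p$, and $d_n=1$ otherwise. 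I would prove this first over $\mbb{Q}$, where every cocycle is a coboundary $f(x+y)-f(x)-f(y)$. I would then pass to $\mbb{F}_p$, where the obstruction lies in the $p$-power case, and conclude over $\mbb{Z}$ by localizing and comparing. This is a delicate but classical combinatorial computation with binomial coefficients and their gcds. The lemma gives $Q^{-(n-1)}\simeq\mbb{Z}$, generated by the class whose cocycle is $C_n$.

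For injectivity, I would build an explicit formal group law over $B=\mbb{Z}[t_1,t_2,\ldots]$ realizing these generators. First take the logarithm $\log(x)=x+\sum m_{n-1}x^n$ over $B\otimes\mbb{Q}$, choosing the $m$'s so that $F(x,y)=\exp(\log x+\log y)$ has integral coefficients. One can instead use a Hazewinkel-type recursive construction, taking $m_{n-1}$ congruent to $t_{n-1}/d_n$ modulo decomposables, so that integrality is automatic. The resulting law induces a map $\phi:\mbb{L}^*\to B$. By construction, $\phi$ sends $t_{n-1}$ to $\pm t_{n-1}$ plus decomposables, so the composite $B\to\mbb{L}^*\to B$ is an isomorphism on indecomposables. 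A graded endomorphism of a connected polynomial ring with finitely generated pieces that is bijective on indecomposables is an isomorphism. Hence $B\to\mbb{L}^*$ is injective, and therefore an isomorphism, which identifies $\mbb{L}^*\simeq\mbb{Z}[t_1,t_2,\ldots]$ with $\deg t_i=-i$.
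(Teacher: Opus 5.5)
The paper gives no proof of this statement; it is quoted from Lazard \cite{L55}, so the comparison is with the classical proof. Your generation half is the standard argument: linearize the axioms modulo $I^2$ to get symmetric $2$-cocycles, apply Lazard's cocycle lemma with $C_n=\frac{1}{d_n}\bigl((x+y)^n-x^n-y^n\bigr)$, and use graded Nakayama to get a surjection $\sigma\colon B=\mbb{Z}[t_1,t_2,\ldots]\to\mbb{L}^*$. Be aware that essentially all the difficulty of the theorem sits in the $\mbb{F}_p$ case of the cocycle lemma, which you name but do not carry out. It also helps to be precise about what yields $Q^{-(n-1)}\simeq\mbb{Z}$. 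For every abelian group $A$, $\mathrm{Hom}(Q^{-(n-1)},A)$ is the group of symmetric homogeneous degree-$n$ cocycles over $A$. Since $Q^{-(n-1)}$ is finitely generated, one-dimensionality for $A=\mbb{Q}$ and for every $A=\mbb{F}_p$ forces $Q^{-(n-1)}\simeq\mbb{Z}$, using also that $C_n$ has content one. A purely integral statement would not detect torsion in $Q^{-(n-1)}$, so ``concluding over $\mbb{Z}$'' is neither needed nor sufficient.

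Your injectivity argument takes a genuinely heavier route than the usual one. An integral formal group law over $B$ whose logarithm has $m_{n-1}\equiv t_{n-1}/d_n$ modulo decomposables does exist; it is Hazewinkel's universal law. With it, your argument (the composite $B\to\mbb{L}^*\to B$ is bijective on indecomposables, hence an isomorphism) is correct. But integrality is not ``automatic'': it is exactly Hazewinkel's functional-equation lemma. Your first option, choosing the $m$'s so that $F$ comes out integral, is not yet an argument. The standard finish avoids integrality entirely. Over a $\mbb{Q}$-algebra every formal group law has a unique logarithm, so $\mbb{L}^*\otimes\mbb{Q}\simeq\mbb{Q}[m_1,m_2,\ldots]$ with $\deg m_i=-i$. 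Hence $\sigma\otimes\mbb{Q}$ is a degreewise surjection between $\mbb{Q}$-vector spaces of equal finite dimension, so it is injective. Then $\ker\sigma$ is a subgroup of the torsion-free group $B$ that vanishes after tensoring with $\mbb{Q}$, so it is zero. Your route buys an explicit universal law over $\mbb{Z}[t_1,t_2,\ldots]$ with a closed-form logarithm. The rational dimension count buys a proof whose only integral input is the cocycle lemma.
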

  
    For a formal group law $(R,F)$, elements $g_1,\dotsc,g_k\in R\llbracket x_1,\dotsc,x_n\rrbracket$, and $a\in \mbb{Z}$, we define
    \[g_1+_F g_2:= F(g_1,g_2);\quad\quad a\cdot_F g_1:=\underbrace{g_1+_F g_1+_F\dotsb+_F g_1}_{\text{$a$ times}};\quad\quad \left.\sum_{i\in[k]}\right.^F g_i :=g_1+_F g_2+_F\dotsb+_F g_k.\]
    
    We now record two lemmas, \cref{lem:distributive-FGL} and \cref{lem:formal_dif}, for later use. \cref{lem:distributive-FGL} follows immediately from the definitions. As we are not aware of a reference for \cref{lem:formal_dif}, we include its proof.
    \begin{lem}\label{lem:distributive-FGL}
        Let $(R,F)$ be a formal group law. For any $a,b\in\mbb{Z}$ and $g\in R\llbracket x\rrbracket$, we have
        \[(a+b)\cdot_F g=(a\cdot_F g)+_F(b\cdot_F g).\]
    \end{lem}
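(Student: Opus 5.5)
The approach is to recognize $a\cdot_F g$ as the $a$-th power of $g$ in an abelian group, and then read off the identity as the exponent law $g^{a+b}=g^a g^b$. Throughout I assume, as is implicit for the series to make sense, that $g$ has zero constant term, i.e.\ $g\in xR\llbracket x\rrbracket$. For negative $a$, I interpret $a\cdot_F g$ as $(-a)\cdot_F(-_F g)$, using the formal inverse of \cref{lem:formal_inv}, and I set $0\cdot_F g=0$.

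First I check that $G=xR\llbracket x\rrbracket$ with the operation $u+_F v=F(u,v)$ is an abelian group.
\begin{itemize}
\item The operation is well defined: substituting series with zero constant term into $F(x,y)$ converges $x$-adically. The result again has zero constant term because $a_{0,0}=0$ (\cref{rem:FGL}).
\item The identity element is $0$, by axiom (\ref{itm:1}) of \cref{defn:FGL}.
\item Commutativity and associativity are axioms (\ref{itm:2}) and (\ref{itm:3}).
\item Inverses are given by $-_F u:=(-_F x)|_{x=u}$. Substituting $x=u$ into the identity $F(x,-_Fx)=0$ of \cref{lem:formal_inv} gives $F(u,-_F u)=0$. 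Substitution is legitimate because it is a continuous ring homomorphism $R\llbracket x\rrbracket\to R\llbracket x\rrbracket$ sending $x\mapsto u$.
\end{itemize}

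Next I observe that, with the above conventions, $n\cdot_F g$ is exactly the $\mbb{Z}$-multiple $ng$ in the abelian group $(G,+_F)$. For $n>0$ this is the iterated sum, for $n=0$ it is the identity, and for $n<0$ it is $|n|$ copies of the inverse. The claim then becomes the standard fact that $n\mapsto ng$ is a group homomorphism $\mbb{Z}\to G$.

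To keep the proof self-contained, I verify that fact by cases on signs.
\begin{itemize}
\item If $a,b\ge 0$, the identity follows by regrouping the iterated sum using associativity.
\item If $a,b\le 0$, apply the previous case to $-_F g$.
\item Mixed signs, say $a\ge 0>b$: if $a+b\ge 0$, write $a=(a+b)+(-b)$. The nonnegative case gives $a\cdot_F g=((a+b)\cdot_F g)+_F((-b)\cdot_F g)$. Then add $b\cdot_F g=(-b)\cdot_F(-_F g)$ to both sides and cancel using $g+_F(-_F g)=0$ together with associativity and commutativity. The case $a+b<0$ is symmetric, using $-_F g$.
\end{itemize}

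The only step needing care is the well-definedness of substitution and of the inverse on $G$. I expect no real obstacle: everything else is formal group theory.
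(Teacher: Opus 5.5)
Your proof is correct and takes the same approach as the paper. The paper only asserts that the lemma follows immediately from the definitions, and your argument unpacks that claim: you treat $(x R\llbracket x\rrbracket,+_F)$ as an abelian group and read the identity as the exponent law. Your explicit conventions are the ones the paper uses implicitly: $g$ has zero constant term, and $a\cdot_F g:=(-a)\cdot_F(-_F g)$ for $a<0$. The paper needs the latter for possibly negative multiples such as $\langle \chi, v_\rho\rangle\cdot_{F_U} x_\rho$ and $\mu(H,G)\cdot_{F_U}(-_{F_U}h_G)$.
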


    \begin{lem}
    \label{lem:formal_dif}
        Let $(R,F)$ be a formal group law and suppose $g_1,\ldots , g_k \in R\llbracket x_1,\dotsc,x_n\rrbracket$, where each $g_i$ has constant term $0$. If $I\subseteq[k]$, then there are $h_j\in R\llbracket x_1,\dotsc,x_n\rrbracket$, with $j\in [k]\setminus I$ and each $h_j$ having constant term $1$, such that 
        \[\left(\left.\sum_{i\in [k]}\right.^F g_i\right) - \left(\left.\sum_{i\in I}\right.^F g_i\right)=\sum_{j\in [k]\setminus I}g_j h_j. \]
    \end{lem}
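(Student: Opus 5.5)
We prove \cref{lem:formal_dif} by induction on $|[k]\setminus I|$, reducing everything to a single basic identity for the formal group law. We first isolate that identity. For power series $u,v$ with zero constant term, write
\[
F(u,v)-u=\sum_{i,j\ge 0}a_{i,j}u^iv^j-u .
\]
By \cref{rem:FGL}, $a_{0,0}=0$ and $a_{1,0}=a_{0,1}=1$. The term $a_{1,0}u$ cancels the subtracted $u$, and the terms $a_{i,0}u^i$ with $i\ge 2$ vanish because $F(x,0)=x$. Every remaining term is divisible by $v$, so
\[
F(u,v)-u=v\Bigl(1+\sum_{i\ge 0,\, j\ge 1,\,(i,j)\neq(0,1)}a_{i,j}u^iv^{j-1}\Bigr)=v\,h(u,v),
\]
where $h$ has constant term $1$. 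Convergence is automatic: $u$ and $v$ lie in the maximal ideal $(x_1,\dots,x_n)$, so the substitution is well defined in $R\llbracket x_1,\dots,x_n\rrbracket$.

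Next we reduce to a convenient ordering of the summands. By associativity and commutativity (Axioms (\ref{itm:2}) and (\ref{itm:3})), the formal sum $\sum^F_{i\in[k]} g_i$ is independent of the order and bracketing of its terms. We may therefore assume $I=\{1,\dots,m\}$ and $[k]\setminus I=\{m+1,\dots,k\}$. For $m\le \ell\le k$, set $S_\ell=\sum^F_{i\le \ell}g_i$. If $I=\emptyset$, we take $S_0=0$, which is consistent with Axiom (\ref{itm:1}).

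The main step is a telescoping sum:
\[
S_k-S_m=\sum_{\ell=m+1}^{k}(S_\ell-S_{\ell-1})=\sum_{\ell=m+1}^{k}\bigl(F(S_{\ell-1},g_\ell)-S_{\ell-1}\bigr)=\sum_{\ell=m+1}^k g_\ell\, h(S_{\ell-1},g_\ell).
\]
Each $S_{\ell-1}$ has constant term $0$, since every $g_i$ does and $F$ has no constant term. Hence $h_\ell:=h(S_{\ell-1},g_\ell)$ is a well-defined power series with constant term $1$, and these $h_\ell$ are the required series.

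We expect no real obstacle here. The only points needing care are the degenerate case $I=\emptyset$ and the use of associativity and commutativity to reorder the summands. The identity $F(u,v)-u\in v\cdot(1+\mathfrak m)$ carries the whole argument.
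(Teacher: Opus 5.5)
Your argument is correct and is essentially the paper's: both rest on the identity $F(u,v)-u=v\,h(u,v)$ with $h$ of constant term $1$, and the paper's inner induction on the number of summands is precisely your telescoping in the case $I=\emptyset$. The only difference is that the paper first splits off the block indexed by $I$, writing the full sum as $F\bigl(\sum^F_{i\in I}g_i,\ \sum^F_{j\notin I}g_j\bigr)$, and then multiplies two units. Your single telescope starting at $S_m$ does both steps at once, so the induction on $|[k]\setminus I|$ you announce at the start is never actually used.
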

    \begin{proof}
        Without loss of generality, we assume that $I=[\ell]$ for some $\ell< k$. By definition, we have
        \begin{align*}
        \left.\sum_{i\in [k]}\right.^Fg_i&=(g_1+_F\dotsb+_Fg_\ell)+_F(g_{\ell+1}+_F\dotsb+_F g_k)\\
        &=(g_1+_F\dotsb+_Fg_\ell)+(g_{\ell+1}+_F\dotsb+_F g_k)+\sum_{i,j\geq 1}a_{i,j}(g_1+_F\dotsb+_Fg_\ell)^i(g_{\ell+1}+_F\dotsb+_F g_k)^j.
        \end{align*}
        Thus, 
        \begin{align*}
        \left(\left.\sum_{i\in [k]}\right.^F g_i\right) - \left(\left.\sum_{i\in I}\right.^F g_i\right)&=(g_{\ell+1}+_F\dotsb+_Fg_k)+\sum_{i,j\geq 1}a_{i,j}(g_1+_F\dotsb+_Fg_\ell)^i(g_{\ell+1}+_F\dotsb+_F g_k)^j\\
        &=(g_{\ell+1}+_F\dotsb+_F g_k)\left(1+\sum_{i,j\geq 1}a_{i,j}(g_1+_F\dotsb+_Fg_\ell)^{i}(g_{\ell+1}+_F\dotsb+_F g_k)^{j-1}\right).     
        \end{align*}
        
        If $\ell=k-1$, then we can take $h_{k}:=\left(1+\sum_{i,j\geq 1}a_{i,j}(g_1+_F\dotsb+_Fg_\ell)^{i}(g_{\ell+1}+_F\dotsb+_F g_k)^{j-1}\right)$. As each $g_i$ has constant term $0$, the constant term of $h_k$ is $1$, concluding the proof of the lemma in this case.
        
        Now assume that $\ell<k-1$. We claim that $(g_{\ell+1}+_F\dotsb+_F g_k)=g_{\ell+1}h_{\ell+1}'+\dotsb+ g_kh_k'$, where each $h_j'\in R\llbracket x_1,\dotsc,x_n\rrbracket$ has constant term $1$. If this claim is true, then setting
        \[h_j:=h_j'\cdot\left(1+\sum_{i,j\geq 1}a_{i,j}(g_1+_F\dotsb+_Fg_\ell)^{i}(g_{\ell+1}+_F\dotsb+_F g_k)^{j-1}\right)\]
        concludes the proof of the lemma. 
        
        We will now prove the claim, using induction on the number of summands $s$ that appear in $g_1+_F\dotsb 
        +_F g_s$. For the base case, we note that $g_1=g_1\cdot h_1'$, where $h_1'=1$. Now assume that $g_1+_Fg_2+_F\dotsb+_F g_{s-1}=g_1h_1'+g_2h_2'+\dotsb+ g_{s-1}h_{s-1}'$ for some $h_{1}',h_{2}',\dotsc,h_{s-1}'\in R\llbracket x_1,\dotsc,x_n\rrbracket$, where each $h_{i}'$ has constant term $1$. We have
        \begin{align*}
        (g_1+_F\dotsb +_F g_{s})&=(g_1+_F\dotsb+_F g_{s-1})+_Fg_s\\
        &=(g_1+_F\dotsb+_F g_{s-1})+g_s+\sum_{i,j\geq 1}a_{i,j}(g_1+_F\dotsb+_F g_{s-1})^i g_{s}^j  \\
        &=g_1h_1'+\dotsb+g_{s-1}h_{s-1}'+g_s\left(1+\sum_{i,j\geq 1}a_{i,j}(g_1+_F\dotsb+_F g_{s-1})^ig_s^{j-1}\right).
        \end{align*}
        Setting $h_{s}':=\left(1+\sum_{i,j\geq 1}a_{i,j}(g_1+_F\dotsb+_F g_{s-1})^ig_s^{j-1}\right)$ concludes the induction step.
    \end{proof}

\subsection{Introduction to algebraic cobordism}\label{subsection:cobordism}
    Algebraic cobordism was constructed by Levine and Morel in \cite{LM07}. In this subsection, we will give a very brief summary of the construction of the algebraic cobordism ring from \cite{LM07}. We will outline the construction of algebraic cobordism in detail in \cref{subsection:construction}.

    Let $\textbf{Sch}_{\mbb{C}}$ be the category of separated schemes of finite type over $\mbb{C}$, and let $\textbf{Sm}_{\mbb{C}}$ be the full subcategory of $\textbf{Sch}_{\mbb{C}}$ consisting of schemes that are smooth and quasi-projective over $\mbb{C}$. Denote the category of commutative, $\mbb{Z}$-graded rings by $\mathbf{Ring}$. An oriented cohomology theory is a contravariant functor $\mrm{h}^*\colon \textbf{Sm}_{\mbb{C}}\to \mathbf{Ring}$\footnote{If $\mrm{h}^*$ is an oriented cohomology theory and $X\in\textbf{Sm}_{\mbb{C}}$, then we emphasize that the definition of an oriented cohomology theory (\`a la \cite{LM07}) requires that the ring $\mrm{h}^*(X)$ is commutative and graded, and does \textit{not} require that $\mrm{h}^*(X)$ is graded commutative. This choice of convention is explained in \cite[page VIII and page 2]{LM07}.}, satisfying several axioms that generalize the usual Eilenberg--Steenrod axioms from algebraic topology\footnote{In \cite{LM07}, Levine--Morel defined \emph{oriented cohomology theories} for schemes over an arbitrary base field. For simplicity, we focus our attention to schemes defined over the base field $\mbb{C}$ in this paper.}. A morphism of oriented cohomology theories is a natural transformation of functors that commutes with push-forwards. Examples of oriented cohomology theories include the \textit{Chow theory} $\CH^*$ that sends a scheme $X$ to its Chow ring $\CH^*(X)$ (\cref{egg:CH}), the \textit{graded $K$-theory} $K^*$ that sends a scheme $X$ to $K^0(X)\otimes_{\mbb{Z}} \mbb{Z}[\beta,\beta^{-1}]$ where $K^0(X)$ is the Grothendieck ring of algebraic vector bundles on $X$ and $\beta$ is a formal variable of degree $-1$ that artificially introduces a grading (\cref{egg:graded-K}), and the universal oriented cohomology theory \textit{algebraic cobordism} $\Omega^*$, which we now briefly describe.
        
    For a scheme $X\in \textbf{Sm}_\mbb{C}$, the algebraic cobordism ring $\Omega^*(X)$ is a ring that is generated by projective morphisms $f:Y\to X$ from smooth, quasi-projective, and irreducible varieties $Y$, modulo certain relations. The group $\Omega^k(X)$ is generated by the classes $[f\colon Y\to X]$ such that $k=\mrm{dim}(X)-\mrm{dim}(Y)$. The relations defining $\Omega^*(X)$, which we will recall in \cref{subsection:construction}, will imply that $\Omega^k(X)=0$ for all $k> \mrm{dim}(X)$. The algebraic cobordism ring $\Omega^*(\mrm{pt})$ of a point $\mrm{pt}=\mrm{Spec}(\mbb{C})$ is isomorphic to the Lazard ring $\mbb{L}^*$ and is generated by the classes of projective spaces $[\mbb{P}^n\to\mrm{pt}]$ and the classes of so-called Milnor hypersurfaces $[H_{n,m}\to\mrm{pt}]$ (\cite[Section 2.5.3]{LM07}). The structure map $X\to\mrm{pt}$ induces a pullback $\Omega^*(\mrm{pt})\to \Omega^*(X)$, giving $\Omega^*(X)$ the structure of a graded $\mbb{L}^*$-module.

    Given an oriented cohomology theory $\mrm{h}^*$ and a vector bundle $E\to X$ of rank $n$ in $\textbf{Sm}_{\mbb{C}}$, there are unique elements $c_i(E)\in \mrm{h}^*(X)$, for $i=0,\dotsc,n$, characterized by the axioms in \cite[pp. 3]{LM07}, which include the Whitney sum formula and the requirement that the $c_i$ commute with pullbacks. The elements $c_i(E)$ are called \textbf{Chern classes}. 
        
    Every oriented cohomology theory $\mrm{h}^*$ gives rise  to a formal group law $(\mrm{h}^*(\mrm{pt}), F)$. The power series $F$ is uniquely determined by the \textbf{Quillen formula}: if $\mcal{L}_1\to X$ and $\mcal{L}_2\to X$ are line bundles in $\textbf{Sm}_{\mbb{C}}$, then $c_1(\mcal{L}_1\otimes \mcal{L}_2)=F(c_1(\mcal{L}_1),c_1(\mcal{L}_2))$.
	
	\begin{egg}(\cite[Example 1.1.4]{LM07})\label{egg:CH}
        The \textbf{Chow theory} $\mrm{CH}^*$ which sends $X\in\textbf{Sm}_{\mbb{C}}$ to its Chow ring $\mrm{CH}^*(X)$ of algebraic cycles modulo rational equivalence defines an oriented cohomology theory. 
		The formal group law associated to $\CH^*$ is the {additive formal group law} over $\mrm{CH}^*(\mrm{pt})\simeq \mbb{Z}$. Indeed, for any $X\in \textbf{Sm}_{\mbb{C}}$ and line bundles $\mcal{L}_1$, $\mcal{L}_2$ on $X$, we have
        $c_1(\mcal{L}_1\otimes \mcal{L}_2)=c_1(\mcal{L}_1)+c_1(\mcal{L}_2)$.
    \end{egg}

    \begin{egg}(\cite[Example 1.1.5]{LM07})\label{egg:graded-K}
        Given $X\in\textbf{Sm}_{\mbb{C}}$, let $K^0(X)$ be the Grothendieck ring of algebraic vector bundles on $X$. The \textbf{graded $K$-theory} is the functor $K^*$ that sends $X\in\textbf{Sm}_{\mbb{C}}$ to the graded ring $K^0(X)\otimes_{\mbb{Z}}\mbb{Z}[\beta,\beta^{-1}]$, where $\mrm{deg}(\beta)=-1$. Given any morphism $f\colon X\to Y$ in $\textbf{Sm}_{\mbb{C}}$, the pullback in $K^*$ is defined by the formula
        \[f^*([\mcal{E}]\cdot \beta^n):=[f^*(\mcal{E})]\cdot \beta^{n}\in K^0(X)\otimes_{\mbb{Z}}\mbb{Z}[\beta,\beta^{-1}],\]
        where $\mcal{E}$ is a vector bundle on $Y$ and $n\in\mbb{Z}$. Given any projective morphism $f\colon X\to Y$ of pure codimension $d$, the pushforward in $K^*$ is defined by
        \[f_*([\mcal{E}]\beta^{n}):=\sum_{i=0}^\infty(-1)^i[R^i f_*(\mcal{E})] \cdot \beta^{n-d}\in K^0(Y)\otimes_{\mbb{Z}}\mbb{Z}[\beta,\beta^{-1}].\]
        Given a line bundle $\mcal{L}$ on $X\in\textbf{Sm}_{\mbb{C}}$, the first Chern class of $\mcal{L}$ is $c_1(\mcal{L})=(1-[\mcal{L}^\vee])\cdot \beta^{-1}$.
        The formal group law corresponding to $K^*$ is the multiplicative-periodic formal group law over $K^*(\mrm{pt})\simeq \mbb{Z}[\beta,\beta^{-1}]$. Indeed, given any two line bundles $\mcal{L}_1$ and $\mcal{L}_2$ on $X\in\textbf{Sm}_{\mbb{C}}$, we have $c_1(\mcal{L}_1\otimes \mcal{L}_2)=F_M(c_1(\mcal{L}_1),c_1(\mcal{L}_2))$.
    \end{egg}

    \begin{egg}\label{egg-complex-cobordism}
        Consider the complex cobordism functor $MU^*$, interpreted by Quillen as a complex-oriented cohomology theory \cite{Q69}. The functor $MU^*$ takes an $n$-dimensional complex manifold $X$ to its (graded) complex cobordism ring $MU^*(X)$. See, e.g., \cite[Section 1]{T97} for exposition on complex cobordism. The degree $2n-i$ piece, $MU^{2n-i}(X)$, is the free abelian group generated by the proper maps $M\to X$, where $M$ is a stably almost complex manifold  of real dimension $i$, modulo certain relations. The functor $MU^*$ is \textit{universal} among complex-oriented cohomology theories, in the sense that, for any complex-oriented cohomology theory $\mrm{h}^*$, there is a unique morphism of complex-oriented cohomology theories $MU^*\to \mrm{h}^*$ \cite{Q71}. As with algebraic cobordism, the complex cobordism ring of a point is isomorphic to the Lazard ring $\mbb{L}^*$.

        Let $X\in\textbf{Sm}_{\mbb{C}}$. The set of complex points $X(\mbb{C})$ of $X$ is a complex manifold. The functor $MU^*$ on $\textbf{Sm}_{\mbb{C}}$ which takes a scheme $X$ to the complex cobordism ring $MU^{2*}(X(\mbb{C}))$ is an oriented cohomology theory. The formal group law arising from $MU^*$ is the universal formal group law $F_U$ over $\mbb{L}^*$.
    \end{egg}

    \begin{rmk}\label{rmk:cobordismComparison}
        The oriented cohomology theories $MU^*$ and $\Omega^*$ have the same formal group law. However, $\Omega^*(-)\not\simeq MU^*(-)$ as oriented cohomology theories. For example, if $X$ is an elliptic curve, then $X(\mbb{C})$ is homeomorphic to a real torus and
        \[\Omega^*(X)\otimes_{\mbb{L}^*} \mbb{Z} \simeq \CH^*(X) \not\simeq H^*(X(\mbb{C})) \simeq MU^*(X(\mbb{C})) \otimes_{\mbb{L}^*} \mbb{Z} \]
        as abelian groups. This tells us that the formal group law does \textit{not} uniquely identify an oriented cohomology theory. 
    \end{rmk}

    Algebraic cobordism is the universal oriented cohomology theory in the following sense.
    \begin{theo}\label{theo:universal}(\cite[Theorem 1.2.6]{LM07})
        Given an oriented cohomology theory $\mrm{h}^*$, there is a unique morphism of oriented cohomology theories $\vartheta\colon \Omega^*\to \mrm{h}^*$.\footnote{In fact, Levine-Morel proved that algebraic cobordism is the universal oriented cohomology theory for schemes defined over \textit{any} base field of characteristic $0$. 
        }
    \end{theo}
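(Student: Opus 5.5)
The plan is to follow the strategy of \cite{LM07}: build $\vartheta$ directly on the generators of $\Omega^*(X)$, check that it kills the defining relations, and then get uniqueness from the fact that $\Omega^*(X)$ is generated by push-forwards of fundamental classes. I will use the Levine--Morel description of $\Omega^*(X)$, recalled in \cref{subsection:construction}. There, $\Omega^*(X)$ is a quotient of the free group $\mcal{Z}^*(X)$ on isomorphism classes of \emph{cobordism cycles} $(f\colon Y\to X;\mcal{L}_1,\dotsc,\mcal{L}_r)$, where $f$ is projective, $Y$ is smooth and irreducible, and the $\mcal{L}_i$ are line bundles on $Y$. The quotient is taken modulo three families of relations: the dimension axiom (Dim), the section axiom (Sect), and the formal group law axiom (FGL).

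First, given an oriented cohomology theory $\mrm{h}^*$, I would define
\[\tilde\vartheta_X\bigl(f\colon Y\to X;\mcal{L}_1,\dotsc,\mcal{L}_r\bigr)=f_*\bigl(c_1(\mcal{L}_1)\cdots c_1(\mcal{L}_r)\cdot 1_Y\bigr)\in \mrm{h}^*(X),\]
using the push-forward and Chern classes of $\mrm{h}^*$. Then I would check the relations one at a time.

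\begin{itemize}
\item (Dim) If $r>\dim Y$, the product of the first Chern classes vanishes in $\mrm{h}^*(Y)$. In fact it suffices to know this after descending to a smaller variety, which follows from the nilpotence of $c_1$ in $\mrm{h}^*$ on a variety of dimension $<r$.
\item (Sect) If $s$ is a section of $\mcal{L}$ with smooth zero locus $i\colon Z\hookrightarrow Y$, then the axioms of an oriented theory give $c_1(\mcal{L})=i_*(1_Z)$. The projection formula then turns the required identity into functoriality of push-forward.
\item (FGL) Let $\phi\colon \mbb{L}^*\to \mrm{h}^*(\mrm{pt})$ be the classifying map of the formal group law of $\mrm{h}^*$, from \cref{rmk:Lazard-ring}. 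The Quillen formula in $\mrm{h}^*$ says that $c_1(\mcal{L}\otimes\mcal{M})=\phi_*F_U(c_1(\mcal{L}),c_1(\mcal{M}))$. So the FGL relation, written with $\mbb{L}^*$-coefficients and extended $\mbb{L}^*$-linearly through $\phi$, maps to zero.
\end{itemize}

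This yields a map of groups $\vartheta_X\colon\Omega^*(X)\to\mrm{h}^*(X)$. By construction it commutes with projective push-forward, since push-forward in $\Omega^*$ is composition of $f$ and push-forward in $\mrm{h}^*$ is functorial. It also commutes with first Chern class operators.

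Next, I would show that $\vartheta$ commutes with pull-backs and is multiplicative, so that it is a morphism of oriented cohomology theories. For a smooth morphism $g$ this is immediate, because the pull-back in both theories is fibre product. For a general morphism I would factor it as a closed immersion (the graph) followed by a smooth projection, which reduces the claim to regular embeddings. There Levine--Morel define pull-back via a moving lemma or a deformation to the normal bundle, in the form of Fulton-style refined Gysin maps built from Chern classes of excess bundles. I would verify that the same formula holds in $\mrm{h}^*$. Multiplicativity then follows, since the product is the external product pulled back along the diagonal.

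For uniqueness, suppose $\vartheta'$ is any morphism of oriented theories. It must send $1_Y$ to $1_Y$, and it commutes with push-forwards and Chern classes. Hence it sends $[f\colon Y\to X;\mcal{L}_1,\dotsc,\mcal{L}_r]=f_*(c_1(\mcal{L}_1)\cdots c_1(\mcal{L}_r)1_Y)$ to the same element as $\vartheta$. Since these classes generate $\Omega^*(X)$, we get $\vartheta'=\vartheta$.

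The main obstacle is the compatibility with pull-backs along non-smooth maps, that is, proving that $\vartheta$ respects the Gysin pull-back along regular embeddings. In $\Omega^*$ this pull-back is built using resolution of singularities and a generalized degree/moving argument, which is where characteristic $0$ enters. My first attempt would be to use deformation to the normal cone: this reduces the claim to the zero section of a vector bundle. There the pull-back in both theories is given by the top Chern class and the projective bundle formula, and these agree by the previous step.
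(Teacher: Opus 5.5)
The paper gives no proof of this statement; it simply quotes \cite[Theorem 1.2.6]{LM07}. Measured against the Levine--Morel argument, your outline has the right shape, and your uniqueness argument and your checks of (Sect) and (FGL) are fine.

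The genuine gap is the step you flag yourself: compatibility of $\vartheta$ with $i^*$ for a closed embedding $i\colon Z\hookrightarrow X$ in $\textbf{Sm}_{\mbb{C}}$. Both naturality and multiplicativity rest on it. An oriented cohomology theory in the sense of \cite{LM07} has no localization sequence, no specialization map, and no refined Gysin or excess-intersection calculus. For non-transverse $f$, its axioms give no formula for $i^*f_*(\cdots)$, so there is no ``same formula in $\mrm{h}^*$'' to verify.

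Deformation to the normal cone does not provide one either:
\begin{itemize}
\item If you lift $x\in\Omega^*(X)$ to the deformation space $D$ by pulling back along $D\to X$, the resulting identity $i^*x=s_0^*(x|_{N_ZX})$ is a tautology.
\item If you lift geometrically, via a resolution of the closure of $Y\times\mbb{G}_m$ in $D$, you need $\vartheta$ to commute with restriction to the special fibre $N_ZX\subset D$ of a class that is not transverse to it. That is again a pull-back along a regular embedding, so the reduction is circular.
\end{itemize}
The zero-section case is indeed easy, but the reason is that $s_0^*=(p^*)^{-1}$ with $p$ smooth. Top Chern classes compute $s_0^*s_{0*}$, not $s_0^*$.

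What closes the gap is a moving lemma on the $\Omega^*$ side: $\Omega^*(X)$ is generated by cobordism cycles $[f\colon Y\to X,\mcal{L}_1,\dotsc,\mcal{L}_r]$ with $f$ transverse to $i$. Its proof uses resolution of singularities, which is one of the places characteristic zero enters. On such cycles, both $i^*_{\Omega}$ and $i^*_{\mrm{h}}$ are computed by transverse base change (cf.\ \cref{lem:smooth_pullbacks}). Since $\vartheta$ commutes with push-forwards and Chern classes, this gives $\vartheta\circ i^*=i^*\circ\vartheta$, and multiplicativity then follows through the diagonal.

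Separately, your justification of (Dim) is not valid. Nilpotence of $c_1$ gives no bound on the order of nilpotence, and it says nothing about products of $r$ different classes. Here is an argument that works, on a smooth $V$ with $\dim V<r$.
\begin{enumerate}
\item Write $\mcal{L}_i=\mcal{M}_i\otimes\mcal{N}_i^{\vee}$ with $\mcal{M}_i,\mcal{N}_i$ very ample. The formal group law puts $c_1(\mcal{L}_1)\cdots c_1(\mcal{L}_r)$ in the ideal generated by products $c_1(\mcal{P}_1)\cdots c_1(\mcal{P}_r)$ with every $\mcal{P}_i$ very ample.
\item By Bertini, $c_1(\mcal{P}_1)=j_*(1_{V_1})$ for a smooth divisor $j\colon V_1\hookrightarrow V$. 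The projection formula then gives $c_1(\mcal{P}_1)\cdots c_1(\mcal{P}_r)=j_*\bigl(c_1(\mcal{P}_2|_{V_1})\cdots c_1(\mcal{P}_r|_{V_1})\bigr)$.
\item Iterating ends on a zero-dimensional variety with at least one factor left. There every line bundle is trivial, and $c_1(\mcal{O})=0$ by homotopy invariance.
\end{enumerate}
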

    
    The following comparison theorems of Levine-Morel, \cref{theo:universal-CH} and \cref{theo:universal-K}, tell us how the Chow theory $\mrm{CH}^*$ and graded $K$-theory $K^*$ can be recovered from algebraic cobordism $\Omega^*$ through the universal morphisms $\Omega^*\to\mrm{CH}^*$ and $\Omega^*\to K^* $ established by \cref{theo:universal}. These comparison theorems show that $\mrm{CH}^*$ and $K^*$ lie in a family of oriented cohomology theories called \textit{free oriented cohomology theories}. We define free oriented cohomology theories in \cref{dfn:free-theory} and record the fact that $\mrm{CH}^*$ and $K^*$ are free oriented cohomology theories in \cref{cor:free-OCT}. 
    
    \begin{theo}\label{theo:universal-CH}(\cite[Theorem 1.2.19]{LM07})
		The universal morphism $\Omega^*\to \CH^*$ induces an isomorphism of oriented cohomology theories $\Omega^*(-)\otimes_{\mbb{L}^*}\mbb{Z}\xrightarrow{\sim}\CH^*(-)$, where the map $\mbb{L}^*\to \mbb{Z}$ is induced by the additive group law $F_A(x,y)=x+y$ over $\mbb{Z}$ (see \cref{rmk:Lazard-ring}).
	\end{theo}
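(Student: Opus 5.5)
The plan follows Levine--Morel's argument, which has two stages: first show that $\CH^*$ is the universal theory with additive formal group law, then compare it with the base change of $\Omega^*$ to $\mbb{Z}$.

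\emph{Stage one: $\Omega^*\otimes_{\mbb{L}^*}\mbb{Z}$ is an oriented cohomology theory.} We first check that $X\mapsto \Omega^*(X)\otimes_{\mbb{L}^*}\mbb{Z}$ is again an oriented cohomology theory. Pullbacks, pushforwards and Chern classes are all $\mbb{L}^*$-linear, so they descend to the tensor product. The axioms involved (projective bundle formula, homotopy invariance, transversal base change) are preserved under base change along $\mbb{L}^*\to\mbb{Z}$. The projective bundle formula and homotopy invariance survive because they are statements about free modules or isomorphisms, and these are stable under tensoring. Tensoring changes the formal group law to its pushforward along $\mbb{L}^*\to\mbb{Z}$, which is the additive law $F_A$.

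\emph{Stage one, continued: the two comparison maps.} Because $\CH^*$ has the additive law, \cref{theo:universal} gives a morphism $\vartheta:\Omega^*\to\CH^*$. This morphism factors through $\Omega^*\otimes_{\mbb{L}^*}\mbb{Z}$, since $\mbb{L}^*$ acts on $\CH^*(X)$ through $\CH^*(\mrm{pt})=\mbb{Z}$ via the classifying map of $F_A$. For the inverse, we use the universality of $\CH^*$ among theories with additive formal group law. This universality rests on Fulton's construction: a class $[Z]$ of an integral subvariety is sent to the pushforward of the fundamental class of a resolution $\tilde Z\to Z$. We must then check two things. The first is independence of the choice of resolution. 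The second is compatibility with rational equivalence, which we show by writing rational equivalence as a double-point or blow-up relation and using that the additive law kills the correction terms.

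\emph{Stage two: surjectivity and injectivity.} Surjectivity of $\Omega^*(X)\otimes\mbb{Z}\to\CH^*(X)$ follows from resolution of singularities, because every cycle class $[Z]$ is the image of $[\tilde Z\to X]$. Injectivity is the main obstacle. It amounts to showing that every class $[Y\to X]$ whose image $f_*[Y]$ in $\CH^*$ vanishes already lies in $\mbb{L}^{<0}\Omega^*(X)$. Equivalently, we must show that $[\tilde Z\to X]$ modulo $\mbb{L}^{<0}$ is independent of the resolution and respects rational equivalence. We would attack this with the generalized degree formula of Levine--Morel: for a projective $f:Y\to X$ with image $Z$, the class $[Y\to X]-\deg(Y/Z)[\tilde Z\to X]$ is a sum of terms of the form $\omega_i\cdot[\tilde Z_i\to X]$ with $\omega_i\in\mbb{L}^{<0}$ and $Z_i\subsetneq Z$. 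After tensoring with $\mbb{Z}$, this reduces every generator to the classes of resolutions of subvarieties.

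\emph{Completing injectivity.} Once generators are reduced in this way, it remains to check the relations of rational equivalence, which we do using the localization sequence for $\Omega_*$ together with the corresponding sequence for $\CH_*$. We then argue by noetherian induction on dimension, comparing $\Omega_*(Z)$, $\Omega_*(X)$ and $\Omega_*(X\setminus Z)$ and applying the five lemma to pass to the generic point.

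The degree formula, and through it the localization sequence, is where I expect the real difficulty to lie. Both depend on resolution of singularities and weak factorization, so I would take them from \cite{LM07}.
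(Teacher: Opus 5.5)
The paper does not prove this statement; it quotes it from \cite{LM07}. There it is deduced from the Borel--Moore version for $\Omega_*$ on $\textbf{Sch}_{\mbb{C}}$, roughly as in your Stage two. The generalized degree formula shows that $\Omega_*(X)\otimes_{\mbb{L}^*}\mbb{Z}$ is generated by the classes $\eta_Z=[\tilde Z\to X]\otimes 1$, and that $\eta_Z$ does not depend on the resolution. One then checks that $[Z]\mapsto\eta_Z$ kills rational equivalence, which gives an inverse to $\vartheta$.

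Your use of the degree formula, and of resolution for surjectivity, is right. The inverse, however, cannot come from ``the universality of $\CH^*$ among theories with additive formal group law''. The theory $\Omega^*\otimes_{\mbb{L}^*}\mbb{Z}$ is tautologically initial among such theories, so that universality is equivalent to the statement you are proving. It is a corollary of the theorem, not an input. Proving it directly for an arbitrary additive theory would also require showing that the resolution-defined maps commute with pullbacks, which you do not address. All you need is a group homomorphism $\CH_*(X)\to\Omega_*(X)\otimes_{\mbb{L}^*}\mbb{Z}$ inverse to $\vartheta$; compatibility with the remaining structure is then inherited from $\vartheta$.

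The genuine gap is the step where you actually verify rational equivalence. The localization sequences $\Omega_*(Z)\to\Omega_*(X)\to\Omega_*(X\setminus Z)\to 0$ and $\CH_*(Z)\to\CH_*(X)\to\CH_*(X\setminus Z)\to 0$ are only right exact. A five-lemma chase with such rows gives surjectivity of the middle map, but not injectivity.

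For injectivity you would need every element of $\ker(\CH_*(Z)\to\CH_*(X))$ to lift to $\ker(\Omega_*(Z)\otimes_{\mbb{L}^*}\mbb{Z}\to\Omega_*(X)\otimes_{\mbb{L}^*}\mbb{Z})$. The elements of that kernel are cycles on $Z$ that become rationally equivalent to zero only on $X$, via divisors of rational functions on subvarieties not contained in $Z$. Controlling them is exactly the relation you set out to prove. Noetherian induction and passing to the generic point cannot see these divisors, since they live on proper closed subsets.

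What is needed is the direct argument you only gestured at in Stage one:
\begin{enumerate}
\item Take $W\subseteq X$ integral of dimension $n+1$ and $f\in k(W)^*$. Choose a resolution $\pi\colon\tilde W\to W$ on which $f$ becomes a morphism $\tilde f\colon\tilde W\to\mbb{P}^1$, with $\tilde f^*(0)=\sum n_iD_i$ and $\tilde f^*(\infty)=\sum m_jD'_j$ simple normal crossings divisors.
\item The formal group law of $\Omega^*\otimes_{\mbb{L}^*}\mbb{Z}$ is additive. So computing $c_1(\tilde f^*\mcal{O}(1))$ from either fibre gives $\sum n_i[D_i\to\tilde W]=\sum m_j[D'_j\to\tilde W]$ in $\Omega^*(\tilde W)\otimes_{\mbb{L}^*}\mbb{Z}$. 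This formal-group-law step is what handles the multiplicities; double point relations alone do not.
\item Push forward to $X$. Components contracted to lower-dimensional subvarieties vanish, because $\Omega_n(T)\otimes_{\mbb{L}^*}\mbb{Z}=0$ for $n>\dim T$ by the generation statement.
\item By the degree formula, every other $D_i$ contributes $\deg(D_i/\pi(D_i))\,\eta_{\pi(D_i)}$, and likewise for the $D'_j$.
\item Since $\pi_*\mrm{div}(\tilde f)=\mrm{div}(f)$ as cycles, this yields $\sum_V\mrm{ord}_V(f)\,\eta_V=0$.
\end{enumerate}
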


    \begin{theo}\label{theo:universal-K}(\cite[Theorem 1.2.18]{LM07})
		The universal morphism $\Omega^*\to K^*$ induces an isomorphism of oriented cohomology theories
			$\Omega^*(-)\otimes_{\mbb{L}^*}\mbb{Z}[\beta,\beta^{-1}]\xrightarrow{\sim} K^*(-)$, where the map $\mbb{L}^*\to \mbb{Z}[\beta,\beta^{-1}]$ is      induced by the multiplicative group law $F_M(x,y)=x+y-\beta xy$ over $\mbb{Z}[\beta,\beta^{-1}]$ (see \cref{rmk:Lazard-ring}).
	\end{theo}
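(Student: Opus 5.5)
The plan is to deduce the statement from the universal property of algebraic cobordism, refined to theories with a prescribed formal group law, together with a dévissage by localization. Throughout, $\mbb{L}^*\to\mbb{Z}[\beta,\beta^{-1}]$ is the map classifying $F_M$ given by \cref{rmk:Lazard-ring}.

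\textbf{Step 1 (the comparison map).} By \cref{theo:universal} there is a unique morphism $\vartheta\colon\Omega^*\to K^*$. On a point it sends $F_U$ to the formal group law of $K^*$, which is $F_M$ by \cref{egg:graded-K}. Hence $\vartheta$ is $\mbb{L}^*$-linear for the $\mbb{L}^*$-module structure on $K^*$ coming from $\mbb{L}^*\to\mbb{Z}[\beta,\beta^{-1}]$. It therefore factors through a natural transformation
\[\bar\vartheta\colon \Omega^*(-)\otimes_{\mbb{L}^*}\mbb{Z}[\beta,\beta^{-1}]\to K^*(-).\]
One checks that the source is again an oriented cohomology theory, with pullbacks, pushforwards and Chern classes extended by base change, and that $\bar\vartheta$ commutes with these structures. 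The remaining task is to show $\bar\vartheta_X$ is bijective for every $X\in\textbf{Sm}_{\mbb{C}}$.

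\textbf{Step 2 (surjectivity).} I would show that $K^0(X)$ is generated by classes of the form $f_*(1)$ with $f\colon Y\to X$ projective and $Y$ smooth. The group $K^0(X)=G_0(X)$ is generated by classes $[\mcal{O}_Z]$ of integral closed subvarieties $Z\subseteq X$. Using resolution of singularities, $[\mcal{O}_Z]$ differs from $f_*[\mcal{O}_{\tilde Z}]$ for a resolution $f\colon\tilde Z\to Z$ by classes supported on lower-dimensional subvarieties. Induction on dimension then shows these pushforwards generate, and each $f_*(1)$ is $\vartheta([f\colon Y\to X])$, so $\bar\vartheta_X$ is surjective.

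\textbf{Step 3 (injectivity).} This is the main obstacle. I would extend both theories to oriented Borel--Moore theories on $\textbf{Sch}_{\mbb{C}}$, namely $\Omega_*\otimes_{\mbb{L}^*}\mbb{Z}[\beta^{\pm1}]$ and $G_0[\beta^{\pm1}]$. Both satisfy a localization exact sequence $A(Z)\to A(X)\to A(X\setminus Z)\to 0$, for $\Omega_*$ by Levine--Morel's localization theorem and for $G_0$ by Quillen's. A noetherian induction on $X$ then reduces injectivity to the generic points, that is, to finitely generated field extensions $k(X)/\mbb{C}$. For the induction to run, the comparison maps on $Z$, $X$ and $X\setminus Z$ must be compatible; this holds because $\bar\vartheta$ commutes with projective pushforward and open restriction.

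At a function field both sides should reduce to $\mbb{Z}[\beta,\beta^{-1}]$ in the appropriate degree, with $\bar\vartheta$ the identity. On the $K$-side this is immediate. On the cobordism side, $\Omega^*(k)\otimes\mbb{Z}[\beta^{\pm1}]$ is the colimit over open $U\subseteq X$, and the needed computation $\Omega_*(k)=\mbb{L}_*$ is exactly where the \emph{generalized degree formula} of Levine--Morel enters. This formula writes every class $[Y\to X]$ as $[Y\to X]_{\text{gen}}\cdot[X]$ plus classes pushed forward from proper closed subsets, with coefficients in $\mbb{L}^*$. Tensoring it down to $\mbb{Z}[\beta^{\pm1}]$ shows that $\Omega^*(X)\otimes\mbb{Z}[\beta^{\pm1}]$ is generated by $[X]$ over $\mbb{Z}[\beta^{\pm1}]$ modulo classes supported on proper closed subsets. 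Comparing with the filtration of $G_0$ by codimension of support, and using that $\bar\vartheta$ sends $[X]$ to $[\mcal{O}_X]$, gives injectivity on the graded pieces and hence, by induction, on $X$.

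I expect establishing the degree formula, equivalently the computation at generic points, to be the hardest step. Everything else is formal bookkeeping with localization and resolution of singularities.
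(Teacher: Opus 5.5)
The paper gives no proof here. It quotes the result from Levine--Morel, and the statement is equivalent to saying that $K^*$ is the universal oriented theory with formal group law $F_M$. Your Steps 1 and 2 are fine, and surjectivity via resolutions is correct. Step 3, however, has a genuine gap.

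Write $\Omega_\beta(-)=\Omega_*(-)\otimes_{\mbb{L}^*}\mbb{Z}[\beta,\beta^{-1}]$. The localization sequences you use are only right exact, and right exactness cannot propagate \emph{injectivity} by noetherian induction. Suppose $x\in\ker\bar\vartheta_X$ and $U=X\setminus Z$. Injectivity on $U$ gives $x=i_*z$, but then $\bar\vartheta_Z(z)$ is only known to lie in $\ker\bigl(G_0(Z)\to G_0(X)\bigr)=\partial\bigl(G_1(U)\bigr)$. Even granting inductively that $\bar\vartheta_Z$ is an isomorphism, you would need $\bar\vartheta_Z^{-1}\bigl(\partial G_1(U)\bigr)\subseteq\ker\bigl(i_*\colon\Omega_\beta(Z)\to\Omega_\beta(X)\bigr)$. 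Nothing on the cobordism side, which has no $G_1$-term, supplies this. The generic-point computation only controls restrictions to open sets, not the kernel of $i_*$.

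The filtration variant has the same hole. The generalized degree formula gives \emph{generators} of $\mrm{gr}_d\Omega_\beta(X)$, namely the classes $[\tilde Z\to X]$ with $\dim Z=d$. Injectivity of $\mrm{gr}_d\bar\vartheta$ requires that every relation satisfied by the $[\mcal{O}_Z]$ in $\mrm{gr}_dG_0(X)$ already holds for the $[\tilde Z\to X]$, and your sketch gives no mechanism for this. That is essentially the whole theorem: the degree formula is not the crux, the relations are.

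One standard way around this is to construct an inverse rather than argue by d\'evissage.
\begin{itemize}
\item For $X$ smooth and $E$ a vector bundle of rank $r$, set $\psi_X([E])=r-\beta\,c_1(E^\vee)$, with $c_1$ taken in $\Omega^*(X)\otimes_{\mbb{L}^*}\mbb{Z}[\beta,\beta^{-1}]$. The Whitney formula makes $\psi_X$ additive on exact sequences. Since the formal group law is now $F_M$ and $1-\beta F_M(x,y)=(1-\beta x)(1-\beta y)$, the splitting principle makes it multiplicative. It visibly commutes with pullbacks, and it sends $c_1^K(L)=(1-[L^\vee])\beta^{-1}$ to $c_1(L)$.
\item The substantive step is that $\psi$ commutes with projective pushforwards. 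Factor $f$ as a closed embedding followed by $\mbb{P}^n_X\to X$. The projection case reduces to $[\mbb{P}^m]\mapsto\beta^m$ under $\mbb{L}^*\to\mbb{Z}[\beta,\beta^{-1}]$, together with $\chi(\mbb{P}^n,\mcal{O}(-i))=\delta_{i,0}$ for $0\le i\le n$. The embedding case is a Riemann--Roch argument in which the Todd class is $1$, precisely because $\psi$ preserves $c_1$.
\item Granting this, $\psi\circ\bar\vartheta$ is an endomorphism of $\Omega^*\otimes_{\mbb{L}^*}\mbb{Z}[\beta,\beta^{-1}]$. By \cref{theo:universal} this theory is universal among theories with formal group law $F_M$, so $\psi\circ\bar\vartheta$ is the identity. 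Conversely, $\bar\vartheta\circ\psi=\mrm{id}$ because $c_1(E^\vee)=(r-[E])\beta^{-1}$ in $K^*(X)$.
\end{itemize}

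Note the contrast with \cref{theo:universal-CH}, where a degree-formula argument does suffice. There $\CH_*$ is presented as cycles modulo divisors of rational functions, so an inverse $Z\mapsto[\tilde Z\to X]$ can be defined on generators and checked on relations. $G_0$ has no comparably simple presentation by subvarieties, so for $K$-theory the inverse is more naturally built from vector bundles.
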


    \begin{rmk}\label{rmk:k0fgl}
        For $X\in\textbf{Sm}_{\mbb{C}}$, one can recover the $K$-ring of $X$ by the ring isomorphism
        $K^*(X)/ \langle \beta -1 \rangle \simeq K^0(X)$. This ensures that, under the ring homomorphism $\mbb{L}^*\to \mbb{Z}$ induced by the formal group law $(\mbb{Z}, x+y-xy)$,  we have an isomorphism of $\mbb{Z}$-algebras
        \[\Omega^*(X) \otimes_{\mbb{L}^*} \mbb{Z} \xrightarrow{\sim} K^0(X).\]
    \end{rmk}

    We now explain in \cref{dfn:free-theory} what it means for an oriented cohomology theory to be \textit{free}, and in \cref{cor:free-OCT}, we record that both $\mrm{CH}^*$ and $K^*$ are free.
    \begin{defn}\label{dfn:free-theory}    
       Let $(R,F)$ be a formal group law. Any oriented cohomology theory of the form $\Omega^*(-)\otimes_{\mbb{L}^*}R$ is called a \textbf{free oriented cohomology theory}, where $\mbb{L}^*\to R$ is the ring homomorphism of \cref{rmk:Lazard-ring} induced by $(R,F)$. 
    \end{defn}
    
    \begin{cor}\label{cor:free-OCT}
         Both $\CH^*$ and $K^*$ are free oriented cohomology theories.
    \end{cor}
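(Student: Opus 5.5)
This follows by comparing \cref{dfn:free-theory} with the two comparison theorems of Levine--Morel, \cref{theo:universal-CH} and \cref{theo:universal-K}. We exhibit, for each theory, a formal group law $(R,F)$ such that the theory is naturally isomorphic to $\Omega^*(-)\otimes_{\mbb{L}^*}R$. Here $\mbb{L}^*\to R$ is the classifying map of \cref{rmk:Lazard-ring}.

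For $\CH^*$, take the additive formal group law $(\mbb{Z},F_A)$ with $F_A(x,y)=x+y$. By \cref{rmk:Lazard-ring} there is a unique ring map $\mbb{L}^*\to\mbb{Z}$ pushing $F_U$ to $F_A$. Concretely, $a_{1,0},a_{0,1}\mapsto 1$ and every other $a_{i,j}\mapsto 0$. \cref{theo:universal-CH} states exactly that the universal morphism $\Omega^*\to\CH^*$ of \cref{theo:universal} induces an isomorphism of oriented cohomology theories $\Omega^*(-)\otimes_{\mbb{L}^*}\mbb{Z}\xrightarrow{\sim}\CH^*(-)$. Hence $\CH^*$ has the form required in \cref{dfn:free-theory}.

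For $K^*$, take the multiplicative-periodic formal group law $(\mbb{Z}[\beta,\beta^{-1}],F_M)$ with $F_M(x,y)=x+y-\beta xy$. Its classifying map $\mbb{L}^*\to\mbb{Z}[\beta,\beta^{-1}]$ sends $a_{1,1}\mapsto-\beta$ and the remaining $a_{i,j}$ with $i+j\ge2$ to $0$. \cref{theo:universal-K} then gives $\Omega^*(-)\otimes_{\mbb{L}^*}\mbb{Z}[\beta,\beta^{-1}]\simeq K^*(-)$ as oriented cohomology theories, so $K^*$ is free as well.

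There is no real obstacle; the only care needed is bookkeeping. We should check that the ring maps in \cref{theo:universal-CH} and \cref{theo:universal-K} are the ones produced by \cref{rmk:Lazard-ring}, which holds by uniqueness there. We should also check that the isomorphisms are isomorphisms of oriented cohomology theories, meaning they are compatible with pullbacks, push-forwards and Chern classes, and not merely ring isomorphisms object by object. Both points are part of the cited statements of Levine--Morel \cite{LM07}.
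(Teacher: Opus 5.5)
Your proposal is correct and matches the paper, which gives no separate argument and records this corollary as an immediate consequence of \cref{dfn:free-theory} together with the comparison theorems \cref{theo:universal-CH} and \cref{theo:universal-K}, exactly as you do. Your explicit descriptions of the classifying maps $\mbb{L}^*\to\mbb{Z}$ and $\mbb{L}^*\to\mbb{Z}[\beta,\beta^{-1}]$ are accurate, and the latter is graded since $a_{1,1}$ and $\beta$ both have degree $-1$; the argument itself only needs the uniqueness in \cref{rmk:Lazard-ring}.
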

    As we explain in \cref{prop:exceptional-explanation} and \cref{cor:exceptional-explanation},
    the observation that both $\mrm{CH}^*$ and $K^*$ are free is key to showing that our result, \cref{thrm:simplicial}, can be specialized to recover the integral ring isomorphism of \cite{LLPP24} between the Chow ring and $K$-ring of a matroid. 

\subsection{Construction of algebraic cobordism}\label{subsection:construction}
    In this subsection, we will describe the construction of \textit{algebraic cobordism}. Our exposition will closely follow \cite[Section 2.1]{K12} and \cite[Chapter 2.1.2]{LM07}. 
    
    Let $X\in\textbf{Sm}_{\mbb{C}}$. Let $\mcal{Z}^*(X)$ be the free abelian group generated by \textbf{cobordism cycles}, which are tuples $\alpha=[f\colon Y\to X,L_1,\dotsc,L_r]$, where $f$ is a projective morphism, $Y\in\textbf{Sm}_{\mbb{C}}$, and the $L_i$ are line bundles on $Y$. As in \cite[Def. 2.1.6]{LM07}, we consider two cobordism cycles $[f\colon Y\to X,L_1,\dotsc,L_r]$ and $[f'\colon Y'\to X,L_1',\dotsc,L_r']$ to be equal if there is a triple $\Phi=(\phi\colon Y\to Y',\sigma,(\psi_1,\dotsc,\psi_r))$ consisting of (i) an isomorphism $\phi\colon Y\xrightarrow{\simeq} Y'$ of $X$-schemes; (ii) a bijection $\sigma\colon\{1,\dotsc,r\}\xrightarrow{\simeq} \{1,\dotsc,r'\}$ (so that $r$ must equal $r'$); and (iii) for each $i=1,\dotsc,r$, an isomorphism $\psi_i\colon L_i\xrightarrow{\simeq} \phi^*(L_{\sigma(i)}')$ of line bundles on $Y$. The \textbf{degree} $\mrm{deg}(\alpha)$ of the cobordism cycle $\alpha=[f\colon Y\to X,L_1,\dotsc,L_r]$ is $\mrm{dim}_{\mbb{C}}(Y)-r$. The group $\mcal{Z}^*(X)$ is graded by the codimension of a cobordism cycle,  where the \textbf{codimension} of $\alpha$ is defined to be $\mrm{dim}_{\mbb{C}}(X)-\mrm{deg}(\alpha)$. That is, $\mcal{Z}^j(X)$ is generated as an abelian group by the cycles $[f\colon Y\to X,L_1,\dotsc,L_r]$ with $j=\mrm{dim}_{\mbb{C}}(X)-\mrm{dim}_{\mbb{C}}(Y)+r$. We will now impose several relations on $\mcal{Z}^*(X)$ to define the algebraic cobordism \textit{group} $\Omega^*(X)$. We will discuss the ring structure on $\Omega^*(X)$ later in this subsection.
    
	\begin{enumerate}
		\item\label{itm:dimension} (\textit{Dimension Axiom})  Let $\mcal{R}^*_{\mrm{dim}}(X)$ be the graded subgroup of $\mcal{Z}^*(X)$ generated by the cobordism cycles $[f\colon Y\to X,L_1,\dotsc,L_r]$ with $\mrm{dim}_{\mbb{C}}(Y)<r$. Define \[\mcal{Z}_{\mrm{dim}}^*(X):=\frac{\mcal{Z}^*(X)}{\mcal{R}^*_{\mrm{dim}}(X)}.\] Given any line bundle $L$ on $X$ and cobordism cycle $[f\colon Y\to X,L_1,\dotsc,L_r]$, one defines a \textbf{Chern class operator} $\widetilde{c_1}(L)\colon \mcal{Z}_{\mrm{dim}}^*(X)\to\mcal{Z}_{\mrm{dim}}^{*+1}(X)$ by \[\widetilde{c_1}(L)[f\colon Y\to X,L_1,\dotsc,L_r]:=[f\colon Y\to X,L_1,\dotsc,L_r,f^*(L)].\]
		\item\label{itm:section} (\textit{Section Axiom}) Let $\mcal{R}^*_{\mrm{sec}}(X)$ be the graded subgroup of $\mcal{Z}_{\mrm{dim}}^*(X)$	generated by the cobordism cycles of the form $[Y\to X,L]-[Z\to X]$, where $s\colon Y\to L$ is a section of the line bundle $L$ transverse to the zero-section, and $Z\hookrightarrow Y$ is the closed subvariety of $Y$ defined by the zeros of $s$. Define \[\underline{\Omega}^*(X):=\frac{\mcal{Z}^*_{\mrm{dim}}(X)}{\mcal{R}^*_{\mrm{sec}}(X)}.\]
		\item\label{itm:FGL} (\textit{Formal Group Law Axiom}) Let $\mcal{R}_{\mrm{FGL}}^*(X)$ be the graded $\mbb{L}^*$-submodule of $\mbb{L}^*\otimes_\mbb{Z}\underline{\Omega}^*(X)$ generated by elements of the form
		\[F_U(\widetilde{c_1}(L_1),\widetilde{c_1}(L_2))(\alpha)-\widetilde{c_1}(L_1\otimes L_2)(\alpha),\]
		over all $\alpha\in\underline{\Omega}^*(X)$, and all line bundles $L_1$, $L_2$ on $X$. The Dimension Axiom (\ref{itm:dimension}) implies that $\widetilde{c_1}(L)^{\mrm{dim}_{\mbb{C}}(Y)-r+1}([f\colon Y\to X,L_1,\dotsc,L_r])=0$, when $\mrm{dim}_{\mbb{C}}(Y)>r$. This means that all but finitely many terms of $F_U(\widetilde{c_1}(L_1),\widetilde{c_1}(L_2))(\alpha)$ are zero.
	\end{enumerate}
    
    \begin{defn}
		The \textbf{algebraic cobordism group} of $X$ is 
		\[\Omega^*(X):=\frac{\mbb{L}^*\otimes_\mbb{Z}\underline{\Omega}^*(X)}{\mcal{R}^*_{\mrm{FGL}}(X)}.\]
	\end{defn}
    
    \Cref{lem:simple-generation} below gives a simple generating set for $\Omega^*(X)$.
	\begin{lem}\label{lem:simple-generation}(\cite[Lemma 2.5.11]{LM07})
		Let $X\in\textbf{Sm}_{\mbb{C}}$. The graded, abelian group $\Omega^*(X)$ is generated as a group by the classes $[Y\to X]$ of projective morphisms $Y\to X$ with $Y$ smooth, quasi-projective, and irreducible.
	\end{lem}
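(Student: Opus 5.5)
The plan is to start from the definition: $\Omega^*(X)=(\mbb{L}^*\otimes_\mbb{Z}\underline{\Omega}^*(X))/\mcal{R}^*_{\mrm{FGL}}(X)$. As an abelian group, it is therefore spanned by elements $a\cdot[f\colon Y\to X,L_1,\dotsc,L_r]$ with $a\in\mbb{L}^*$ a monomial in the generators. Two reductions are needed. First, I will remove the line bundles $L_i$. Second, I will absorb the Lazard coefficients $a$ into the source variety. Finally, a smooth $Y$ is the disjoint union of its irreducible components, and $[Y\to X]$ is the sum of the classes of these components by the additivity built into the cycle group. So it suffices to produce classes $[Y\to X]$ with $Y$ smooth and quasi-projective, without any irreducibility assumption.

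\emph{Step 1: removing line bundles.} I will induct on $r$ and on $\dim Y$; the dimension axiom \eqref{itm:dimension} guarantees termination. Since $Y$ is quasi-projective, write $L_r\simeq A\otimes B^{\vee}$ with $A$ and $B$ very ample. For a very ample bundle $A$, Bertini's theorem gives a section transverse to the zero section with smooth zero locus $Z\subset Y$. The section axiom \eqref{itm:section} then gives
\[[Y\to X,L_1,\dotsc,L_{r-1},A]=[Z\to X,L_1|_Z,\dotsc,L_{r-1}|_Z],\]
which has fewer line bundles. The same holds for $B$. Next, the trivial bundle has a nowhere-vanishing section, so $\widetilde{c_1}(\mcal{O})=0$. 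The formal group law axiom \eqref{itm:FGL} then gives $F_U(\widetilde{c_1}(B),\widetilde{c_1}(B^\vee))=0$. Since $\widetilde{c_1}(B)$ acts nilpotently by the dimension axiom, I can conclude $\widetilde{c_1}(B^\vee)=-_{F_U}\widetilde{c_1}(B)$ via \cref{lem:formal_inv}. Applying \eqref{itm:FGL} once more gives $\widetilde{c_1}(L_r)=F_U(\widetilde{c_1}(A),-_{F_U}\widetilde{c_1}(B))$. This is a finite $\mbb{L}^*$-linear combination of monomials $\widetilde{c_1}(A)^i\widetilde{c_1}(B)^j$, and the monomial $i=j=0$ does not occur. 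Each such monomial is computed by iterating the section axiom: repeatedly intersect with general sections of $A$ and $B$, restricted to the successive smooth zero loci. This expresses the original cycle as an $\mbb{L}^*$-combination of cycles with fewer line bundles, completing the induction.

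\emph{Step 2: absorbing coefficients.} It remains to show that $a\cdot[Y\to X]$ is a $\mbb{Z}$-combination of classes $[Y'\to X]$. Here I will use the Levine--Morel identification $\mbb{L}^*\xrightarrow{\sim}\Omega^*(\mrm{pt})$. Under it, $\mbb{L}^*$ is generated as a ring by the classes of $\mbb{P}^n$ and of the Milnor hypersurfaces $H_{n,m}$. Hence every element is a $\mbb{Z}$-combination of classes $[V\to\mrm{pt}]$ with $V$ a product of these, so $V$ is smooth and projective. I also need compatibility of the $\mbb{L}^*$-module structure on $\Omega^*(X)$ with external products, namely
\[[V\to\mrm{pt}]\cdot[Y\to X]=[Y\times V\to X].\]
This product is projective, and $Y\times V$ is smooth and quasi-projective.

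I expect Step 2 to be the main obstacle. Its difficulty is that the formal Lazard coefficients in the definition must be matched with geometric classes. This needs the deep input that the canonical map $\mbb{L}^*\to\Omega^*(\mrm{pt})$ is an isomorphism, compatibly with the module structure. Proving that from scratch is the heart of \cite{LM07}, so I would cite it rather than reprove it. Step 1, by contrast, is a routine combination of Bertini's theorem with the three axioms.
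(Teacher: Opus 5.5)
The paper gives no argument of its own here (it only cites \cite[Lemma 2.5.11]{LM07}). Your proof is correct and is essentially the standard Levine--Morel argument: you eliminate the line bundles via $L\simeq A\otimes B^{\vee}$, Bertini, the section axiom and the formal group law, and then absorb the Lazard coefficients into products of geometric classes in $\Omega^*(\mrm{pt})$.

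One correction to your assessment of Step 2: the isomorphism $\mbb{L}^*\simeq\Omega^*(\mrm{pt})$ is not needed. In \cite{LM07} it is only established later, so invoking it there risks circularity. It suffices that the images in $\Omega^*(\mrm{pt})$ of the coefficients $a_{i,j}$ of $F_U$ are integer polynomials in the classes $[\mbb{P}^k]$ and $[H_{k,l}]$. This follows from the axioms alone, by induction on $i+j$: push forward to the point the identity $[H_{n,m}\to\mbb{P}^n\times\mbb{P}^m]=F_U(\widetilde{c_1}(\mcal{O}(1,0)),\widetilde{c_1}(\mcal{O}(0,1)))(1_{\mbb{P}^n\times\mbb{P}^m})$, using that $\widetilde{c_1}(\mcal{O}(1,0))^i\widetilde{c_1}(\mcal{O}(0,1))^j(1_{\mbb{P}^n\times\mbb{P}^m})=[\mbb{P}^{n-i}\times\mbb{P}^{m-j}\to\mbb{P}^n\times\mbb{P}^m]$.
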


    If $g\colon X\to X'$ is a projective morphism in $\textbf{Sm}_{\mbb{C}}$ of relative codimension $d$, then the \textbf{pushforward along $g$} is the group homomorphism defined by post-composition:
    \[g_*\colon \Omega^*(X)\to \Omega^{*+d}(X'),\]
    \[g_*([f\colon Y\to X,L_1,\dotsc,L_r]):=[g\circ f\colon Y\to X',L_1,\dotsc,L_r].\]
    
    If $g\colon X\to X'$ is an arbitrary morphism in $\textbf{Sm}_\mbb{C}$, then $g$ defines a \textbf{pullback map} $g^*\colon \Omega^*(X')\to \Omega^*(X)$ which is difficult to describe in general; see \cite[Section 6.5, Section 6.6]{LM07}. The main difficulty is that if $[Y\to X']$ is a class in $\Omega^*(X')$, then one would expect $g^*([Y\to X'])$ to be represented by the map $Y \times_{X'} X \to X$. However, $Y \times_{X'} X$ often fails to be smooth and thus the map $Y\times_{X'} X \to X$ does not define a class in $\Omega^*(X)$. However, in special situations, this is exactly how pullbacks act.

    \begin{lem}(\cite[Corollary 6.5.5]{LM07})\label{lem:smooth_pullbacks}
        Let $Z,X \in \textbf{Sm}_{\mbb{C}}$, $i:Z \to X$ be a regular embedding, and $[Y\to X]\in \Omega^*(X)$. If $Y\times_X Z$ is in $\textbf{Sm}_{\mbb{C}}$, then $i^*([Y\to X])= [Y\times_X Z \to Z]$.
    \end{lem}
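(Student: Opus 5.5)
The plan is to deduce the statement from the construction of pullbacks along regular embeddings in \cite[Chapter 6]{LM07}. We do not use the cycle-level relations of \cref{subsection:construction} directly. We read the hypothesis as \emph{transversality}: $W:=Y\times_X Z$ is smooth of the expected codimension in $Y$, i.e.\ $\operatorname{codim}(W,Y)=\operatorname{codim}(Z,X)$. Without this, an excess-intersection Chern class term would appear, so this is the regime in which the formula can hold.

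First, write $f\colon Y\to X$, $f'\colon W\to Z$ and $i'\colon W\to Y$ for the projections. For a regular embedding $i$, the pullback $i^*$ on $\Omega^*(X)=\Omega_{\dim X-*}(X)$ is the refined Gysin homomorphism $i^!$. This is compatible with projective pushforward: for the cartesian square above,
\[i^*f_*(1_Y)=f'_*\, i^!(1_Y), \qquad i^!(1_Y)\in \Omega_*(W).\]
Since $[Y\to X]=f_*(1_Y)$, where $1_Y=[\mathrm{id}_Y]$, it suffices to show $i^!(1_Y)=1_W=[\mathrm{id}_W]$.

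Second, recall that $i^!$ is built by deformation to the normal cone. One specializes $1_Y$ to the class of the normal cone $C_{W}Y$ inside $f'^*N_ZX$, and then applies the inverse of the homotopy-invariance isomorphism $\Omega_*(W)\xrightarrow{\sim}\Omega_{*+d}(f'^*N_ZX)$, which is the pullback along the zero section. The key step is to identify the normal cone. Since $W$ is smooth and the codimensions agree, $i'$ is a regular embedding, $C_WY=N_WY$ is a vector bundle, and the natural closed immersion $N_WY\hookrightarrow f'^*N_ZX$ is a closed embedding of vector bundles of equal rank. Hence it is an isomorphism. The specialization of $1_Y$ is then the fundamental class of the total space of $f'^*N_ZX$, which is the flat pullback of $1_W$. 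The zero-section Gysin map inverts this flat pullback, so $i^!(1_Y)=1_W$ and therefore $i^*[Y\to X]=[W\to Z]$.

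The main obstacle is making the specialization step rigorous inside $\Omega_*$. Concretely, one must check that the class of the special fiber of the deformation space $\mathrm{Def}_W(Y)\to\mathbb{A}^1$ equals $[N_WY]$. I would do this with the section axiom (\ref{itm:section}) applied to the principal divisor defining the special fiber, which is smooth here because $N_WY$ is a bundle over smooth $W$, so no formal group law correction from non-reduced components enters. Compatibility of $i^!$ with pushforward, and independence of choices, are the technical content of \cite[Section 6.5]{LM07}, and I would cite them rather than reprove them.
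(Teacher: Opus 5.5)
Your proof is correct. The paper gives no argument here beyond citing \cite[Corollary 6.5.5]{LM07}, and you have reconstructed the refined Gysin computation behind that citation. The identity $i^*f_*(1_Y)=f'_*i^!(1_Y)$ holds for any cartesian square. The hypothesis enters only when you compute $i^!(1_Y)=1_W$, through three facts: $C_WY=N_WY\xrightarrow{\sim}f'^*N_ZX$; $\mathrm{Def}_W(Y)$ is smooth; and its special fibre is a smooth principal divisor. Together these give specialization of $1_Y$ equal to $1_{N_WY}=p^*(1_W)$.

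There is also a shorter route once transversality is in the hypothesis. $\Omega^*$ is an oriented cohomology theory on smooth varieties, so its transverse base-change axiom gives $i^*f_*=f'_*i'^*$. Hence $i^*[Y\to X]=f'_*(i'^*1_Y)=f'_*(1_W)=[Y\times_XZ\to Z]$. Your deformation-to-the-normal-cone argument is essentially a proof of that axiom in the case $\alpha=1_Y$. Its advantage is that it shows exactly where transversality is used: the excess bundle $f'^*N_ZX/N_WY$ vanishes. It still rests on the same Chapter 6 inputs you cite, namely compatibility of $i^!$ with projective pushforward and well-definedness of specialization.

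You are right to build the expected-codimension condition into the hypothesis, and the lemma as printed is false without it. Pullback preserves codimension, so the two sides lie in degrees $\dim X-\dim Y$ and $\dim Z-\dim(Y\times_XZ)$. For example, take $X=\mathbb{P}^2$ and $Y=Z=L$ a line. Then $L\times_{\mathbb{P}^2}L=L$ is smooth, but $i^*[L\to\mathbb{P}^2]=c_1(\mathcal{O}_L(1))=[\mathrm{pt}\to L]\in\Omega^1(L)$, not $1_L$.

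For smooth $Y$ and regular $i$, your condition is the same as Tor-independence, i.e.\ transversality in the sense of \cite{LM07}, which is what the cited result needs. Both uses in the paper satisfy it. In \cref{lem:pullback-of-inclusion} the intersection is proper by \cref{lem:intersection}, and in \cref{lem:disjoint-cycles} the fibre product is empty.
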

    
    To construct a cup product for $\Omega^*(X)$, we first define an \textbf{external product} $\times: \Omega^*(X) \otimes \Omega^*(Y) \to \Omega^*(X\times Y)$ for $X$ and $Y$ in $\textbf{Sm}_\mbb{C}$. The external product is a group homomorphism and is defined as 
    \begin{align*}
        [f&\colon X'\to X,L_1,\dotsc,L_r]\times [g\colon Y'\to Y,M_1,\dotsc,M_s]\\&=[f\times g\colon X'\times Y'\to X\times Y,p_{X'}^*(L_1),\dotsc,p_{X'}^*(L_r),p_{Y'}^*(M_1),\dotsc,p_{Y'}^*(M_s)],
     \end{align*}
     where $p_{X'}\colon X'\times Y'\to X'$ and $p_{Y'}\colon X'\times Y'\to Y'$ are the projections onto $X'$ and $Y'$, respectively. Now consider the diagonal embedding $\delta\colon X\to X\times X$ in $\textbf{Sm}_{\mbb{C}}$. For any $X\in\textbf{Sm}_{\mbb{C}}$, the \textbf{cup product} on $\Omega^*(X)$ is defined by \[\cup:=\delta^*\circ \times \colon \Omega^*(X)\times\Omega^*(X)\to \Omega^*(X\times_{\mbb{C}} X)\to \Omega^*(X).\]
     As the cup product is defined using $\delta^*$, it can often be hard to compute the product of two classes in $\Omega^*(X)$. One situation in which the product is easy to compute is the following. 

     \begin{lem}\label{lem:disjoint-cycles}
        Let $D, E, X\in \textbf{Sm}_{\mbb{C}}$, and suppose $f\colon D\hookrightarrow X$ and $g\colon E\hookrightarrow X$ are regular embeddings. Consider the respective cobordism classes $[f\colon D\to X]$ and $[g\colon E\to X]$ in $\Omega^*(X)$. Assume that $f(D)\cap g(E)=\emptyset$. Then $[f\colon D\to X]\cup [g\colon E\to X]=0$ in $\Omega^*(X)$.
        \end{lem}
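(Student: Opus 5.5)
The plan is to compute the product through the definition $\cup = \delta^*\circ\times$ and show that the relevant pullback lands on an empty scheme. The external product gives
\[[f\colon D\to X]\times[g\colon E\to X]=[f\times g\colon D\times E\to X\times X],\]
with no line bundles attached. The map $f\times g$ is a regular embedding, hence projective, and $D\times E$ is smooth and quasi-projective. So this is a legitimate class in $\Omega^*(X\times X)$. By definition,
\[[f\colon D\to X]\cup[g\colon E\to X]=\delta^*[f\times g\colon D\times E\to X\times X],\]
where $\delta\colon X\to X\times X$ is the diagonal.

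Next I will apply \cref{lem:smooth_pullbacks} to $i=\delta$ and $Y=D\times E$. Since $X$ is smooth and separated over $\mbb{C}$, the diagonal $\delta$ is a closed immersion. Its ideal sheaf is locally generated by a regular sequence of length $\dim X$, the conormal sheaf being $\Omega_X$, so $\delta$ is a regular embedding. The fiber product is
\[(D\times E)\times_{X\times X}X\cong D\times_X E,\]
and set-theoretically this is $f(D)\cap g(E)$. By hypothesis this intersection is empty, so the fiber product is the empty scheme. The empty scheme is vacuously smooth and quasi-projective, so it lies in $\textbf{Sm}_{\mbb{C}}$ (if one prefers, view it as the empty disjoint union). \Cref{lem:smooth_pullbacks} then gives
\[\delta^*[D\times E\to X\times X]=[\emptyset\to X].\]

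Finally, I need $[\emptyset\to X]=0$ in $\Omega^*(X)$. This holds because $\mcal{Z}^*(X)$ is additive with respect to disjoint unions: $[Y\sqcup Y'\to X]=[Y\to X]+[Y'\to X]$ in $\Omega^*(X)$, which is the convention of \cite{LM07}, where the empty scheme gives the zero class. Alternatively, one can see it from the Section Axiom by taking the trivial bundle with a nowhere-vanishing section. Applying this to $Y=Y'=\emptyset$ gives $[\emptyset]=2[\emptyset]$, hence $[\emptyset]=0$.

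I expect the main obstacle to be only this bookkeeping: checking that \cref{lem:smooth_pullbacks} really applies when the fiber product is empty, and that empty cycles are zero. The geometric content is just the transversality in the vacuous sense, which is exactly what the disjointness hypothesis provides. If one worries about degenerate cases, an alternative route is to apply \cref{lem:smooth_pullbacks} to the open embedding $j\colon X\setminus g(E)\hookrightarrow X$ together with the localization sequence. That is heavier machinery, though, so the direct diagonal argument is the one I would use.
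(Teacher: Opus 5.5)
Your proposal is correct and is the argument the paper intends when it says the lemma follows from \cref{lem:smooth_pullbacks}. You write the cup product as $\delta^*$ of the external product $[D\times E\to X\times X]$, note that its fiber product with the diagonal is $D\times_X E=\emptyset$ (vacuously smooth and transverse), and conclude that the pullback is $[\emptyset\to X]=0$. Your Section Axiom aside is incomplete as written: it only gives $[\emptyset\to X]=[Y\to X,\mathcal{O}_Y]$, and you would need, e.g., $\dim Y=0$ together with the Dimension Axiom to finish. This does not matter, because the Levine--Morel convention on disjoint unions already gives $[\emptyset\to X]=0$.
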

    Lemma~\ref{lem:disjoint-cycles} follows from Lemma~\ref{lem:smooth_pullbacks}. We close this section by recalling the definition of the first Chern class of a line bundle in algebraic cobordism.
    
    \begin{defn}(\cite[Section 5.2]{LM07})
        The \textbf{first Chern class} in $\Omega^*$ of a line bundle $L\to X$ in $\textbf{Sm}_{\mbb{C}}$ is defined to be \[c_1(L):=\widetilde{c_1}(L)(1_X).\]
    \end{defn}
    
    Let $D$ be a smooth divisor in $X\in\textbf{Sm}_{\mbb{C}}$. The Section Axiom (\ref{itm:section}) implies that 
	\[c_1(\mcal{O}_X(D))=[D\to X]\in\Omega^*(X).\]
    The first Chern class $c_1$ satisfies the property that for any line bundle $L\to X$ in $\textbf{Sm}_{\mbb{C}}$, we have $c_1(L)=s^*s_*(1_X)\in\Omega^1(X)$, where $s\colon X\to L$ denotes the zero section.
    
\section{Algebraic cobordism ring of $X_M$}\label{section:algebraic-cobordism-XM}
    In \cref{subsection:toric}, we recall the presentation of the algebraic cobordism ring of a smooth toric variety from \cite{KU13}. In \cref{subsection:FY}, we give a Feichtner-Yuzvinsky style presentation for the algebraic cobordism ring of a matroid. In \cref{subsection:simplicial}, we give a simplicial presentation for the algebraic cobordism ring of a matroid. The main result of this section is \cref{thrm:simplicial}.
    
    Recall the notation $(\mbb{L}^*,F_U)$ for the universal formal group law over the Lazard ring $\mbb{L}^*$.

\subsection{Algebraic cobordism rings of toric varieties}\label{subsection:toric}
     In \cite[Theorem 1.2]{KU13}, Krishna and Uma give a presentation for the algebraic cobordism ring of a smooth toric variety. We recall their presentation in \cref{thrm:KU} and compare it to the standard presentation for the Chow ring of a smooth toric variety in \cref{rmk:compare-toric}.

     Let $\Sigma$ be a unimodular fan in $N_{\mathbb{R}}$ with $n$-dimensional lattice $N$ and  {smooth} toric variety $X_\Sigma$. For each ray $\rho$ in $\Sigma$, let $v_\rho$ be the primitive ray generator of $\rho$, and let $D_{\rho}$ be the divisor in $X_\Sigma$ corresponding to $\rho$. The inclusion $D_\rho\hookrightarrow X_\Sigma$ defines a class $[D_\rho\hookrightarrow X_\Sigma]$ in $\Omega^*(X_\Sigma)$. Consider the formal power series ring over $\mbb{L}^*$ with variables indexed by the rays in $\Sigma$  \[T_{\Sigma}:=\mbb{L}^*\llbracket  x_\rho\text{ $\colon$ } \text{$\rho$ is a ray in $\Sigma$} \rrbracket.\]
    Consider the following ideals in $T_\Sigma$:
    \begin{align*}
    I_1^\Sigma:=&\langle x_{\rho_1}\cdots x_{\rho_k}:\text{ the rays } \rho_1,\ldots,\rho_k \text{ do not generate a cone of }\Sigma \rangle,\\
    I_2^\Sigma:=&\left\langle \left.\sum_{\text{$\rho$ ray}}\right.^{F_U} \langle \chi, v_\rho \rangle \cdot_{F_U} x_\rho : \chi \in N^\vee\right \rangle,\text{ and}\\
    I^\Sigma_{\text{nil}}:=&\langle x_\rho^{n+1} : \text{$\rho$ is a ray in $\Sigma$}\rangle.
    \end{align*}

    In the proof of \cite[Theorem 8.2]{KU13}, it is shown that $I_{\text{nil}}^\Sigma \subseteq I_1^\Sigma +I_2^\Sigma$ and that the following isomorphisms hold. We choose to include the ideal $I_{\text{nil}}^\Sigma$ in our presentation because it will make later arguments more transparent.
    
    \begin{theo}(\cite[Theorem 1.2]{KU13}) \label{thrm:KU} 
    There is an $\mbb{L}^*$-algebra isomorphism
    \[\frac{T_\Sigma}{I_1^\Sigma+I_2^\Sigma + I^\Sigma_{\text{nil}}} \to \Omega^*(X_\Sigma),\quad x_\rho\mapsto [D_\rho\hookrightarrow X_\Sigma].\]
    \end{theo}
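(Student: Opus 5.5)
The plan is to build the map explicitly, check that the three ideals die, and then prove bijectivity. Surjectivity will come by reduction to the Chow ring, and injectivity will go through $T$-equivariant cobordism, where a Stanley--Reisner description is available.

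\emph{Step 1: the map is well defined.} Send $x_\rho\mapsto[D_\rho\hookrightarrow X_\Sigma]=c_1(\mcal{O}(D_\rho))$. By the dimension axiom, $\Omega^k(X_\Sigma)=0$ for $k>n$. Hence every element of positive degree is nilpotent, and the power series ring $T_\Sigma$ maps to $\Omega^*(X_\Sigma)$ through its quotient by $I^\Sigma_{\text{nil}}$; in particular $I^\Sigma_{\text{nil}}$ lies in the kernel. For $I_1^\Sigma$, suppose $\rho_1,\dots,\rho_k$ do not span a cone. Choose a minimal subset that does not span a cone. Then the divisors $D_{\rho_i}$ in that subset have empty common intersection, while every proper subfamily meets transversally in a smooth orbit closure. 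Applying Lemma~\ref{lem:smooth_pullbacks} iteratively computes the product of all but one of the classes as the class of that orbit closure $V$. \cref{lem:disjoint-cycles}, applied to $V$ and the remaining divisor, then kills the product. For $I_2^\Sigma$, fix $\chi\in N^\vee$. The character $\chi$ is a rational function with divisor $\sum_\rho\langle\chi,v_\rho\rangle D_\rho$, so $\bigotimes_\rho\mcal{O}(D_\rho)^{\otimes\langle\chi,v_\rho\rangle}\simeq\mcal{O}_{X_\Sigma}$. The Quillen formula, applied repeatedly together with \cref{lem:formal_inv} for the negative exponents, turns $c_1$ of this tensor product into $\sum^{F_U}\langle\chi,v_\rho\rangle\cdot_{F_U}x_\rho$. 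Since $c_1(\mcal{O})=0$, this element maps to zero.

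\emph{Step 2: surjectivity.} Both sides are graded $\mbb{L}^*$-modules, $\mbb{L}^*$ is concentrated in nonpositive degrees, and both sides vanish in degrees above $n$. By \cref{theo:universal-CH}, reducing modulo $\mbb{L}^{<0}$ turns $F_U$ into the additive law. So $T_\Sigma/(I_1^\Sigma+I_2^\Sigma+I^\Sigma_{\text{nil}})\otimes_{\mbb{L}^*}\mbb{Z}$ becomes the classical Stanley--Reisner presentation of $\CH^*(X_\Sigma)$, which is an isomorphism. A graded Nakayama argument then lifts surjectivity: induct on degree, using that each degree-$k$ piece is generated by degree-$k$ images plus $\mbb{L}^{<0}$ times higher-degree pieces, and that all degrees above $n$ vanish.

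\emph{Step 3: injectivity, the main obstacle.} Nakayama does not give injectivity, because neither side is a priori flat over $\mbb{L}^*$ when $\Sigma$ is not complete. I would pass to $T$-equivariant algebraic cobordism $\Omega_T^*$ in three steps.
\begin{enumerate}
\item For the affine chart $U_\sigma\simeq\mbb{A}^k\times(\mbb{G}_m)^{n-k}$, homotopy invariance and the torus quotient show that $\Omega_T^*(U_\sigma)$ is $\mbb{L}^*\llbracket x_\rho:\rho\in\sigma(1)\rrbracket$, as an algebra over $\Omega_T^*(\mrm{pt})=\mbb{L}^*\llbracket t_1,\dots,t_n\rrbracket$.
\item Gluing the charts by Mayer--Vietoris/localization along the poset of cones, inducting on the number of maximal cones, identifies $\Omega_T^*(X_\Sigma)$ with $T_\Sigma/I_1^\Sigma$, completed appropriately.
\item Show that the forgetful map induces $\Omega_T^*(X_\Sigma)\otimes_{\Omega_T^*(\mrm{pt})}\mbb{L}^*\xrightarrow{\sim}\Omega^*(X_\Sigma)$. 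The equivariant parameter $c_1^T(\chi)$ equals $\sum^{F_U}\langle\chi,v_\rho\rangle\cdot_{F_U}x_\rho$, so killing the $t_i$ is exactly quotienting by $I_2^\Sigma$.
\end{enumerate}

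The hard point is item 3. I would first try to prove it via the Borel construction, approximating $ET$ by $(\mbb{A}^N\setminus0)^n$, which gives a Gysin-type exact sequence for each $\mbb{G}_m$ factor. The goal is to show that the sequence is right exact on cobordism, so that killing $c_1^T(\chi)$ for a basis of $N^\vee$ one character at a time yields the non-equivariant ring. The remaining issue is that the equivariant Stanley--Reisner ring is free over $\mbb{L}^*\llbracket t\rrbracket$ only for well-behaved fans. I therefore expect this step to need either a regular-sequence argument on $T_\Sigma/I_1^\Sigma$ or the exactness of the localization sequence alone, which suffices for the right-exact statement.
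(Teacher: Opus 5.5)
The paper does not prove this statement. It quotes \cite[Theorem 1.2]{KU13}, whose proof goes through equivariant cobordism much as you outline.

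Your Steps 1 and 2 are sound. The vanishing of $\Omega^{>n}(X_\Sigma)$ makes the map out of the power series ring well defined and kills $I^\Sigma_{\text{nil}}$. Your arguments for $I_1^\Sigma$ and $I_2^\Sigma$ are correct. The graded Nakayama argument for surjectivity is the same descending induction as in \cref{lem:lifts-generate}. You are also right that flatness cannot give injectivity for general smooth fans: the rays $(1,0)$ and $(1,2)$ in $\mbb{Z}^2$, with no two-dimensional cone, already give $\CH^1(X_\Sigma)\simeq\mbb{Z}/2$. However, injectivity is the entire content of the theorem beyond Steps 1--2. Your Step 3 names the route without supplying either of its two load-bearing inputs, so as a proof there is a genuine gap.

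\emph{Item 2.} This cannot be done by ``gluing via Mayer--Vietoris.'' For gluing in $\Omega^*$ and $\Omega^*_T$ you only have the right exact localization sequence $\Omega_*(Z)\to\Omega_*(X)\to\Omega_*(X\setminus Z)\to 0$. Induction over cones therefore gives at most a surjection from the (graded-completed) Stanley--Reisner ring onto $\Omega^*_T(X_\Sigma)$. Restricting to fixed points is unavailable, because a non-complete fan may have no full-dimensional cones; for instance, $X_M$ has no $T$-fixed points unless $M$ is Boolean. You need a separate injectivity argument. One option is to remove a closed orbit $O(\sigma)$ with $\sigma$ maximal. Then show the pushforward $\Omega^*_T(O(\sigma))\to\Omega^*_T(X_\Sigma)$ is injective by restricting to $U_\sigma$, where it becomes multiplication by the non-zero-divisor $\prod_{\rho\in\sigma(1)}x_\rho$. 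Finally, compare with the corresponding short exact sequence of Stanley--Reisner rings. Bijectivity is genuinely needed here: if $f$ dies in $\Omega^*(X_\Sigma)$, item 3 only shows that $\tilde f-\sum_i t_ig_i$ lies in the kernel of the map from the equivariant Stanley--Reisner ring to $\Omega^*_T(X_\Sigma)$.

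\emph{Item 3.} You leave this as an expectation. Right exactness does hold at each finite stage $X_\Sigma\times^T(\mbb{A}^N\setminus 0)^n$, by the localization sequences of the $\mbb{G}_m$-torsors. But $\Omega^*_T$ is an inverse limit over $N$, and its terms in a fixed degree do not stabilize, since $\mbb{L}^*$ has elements of arbitrarily negative degree. So you must still lift the relations compatibly, which is a Mittag-Leffler/$\varprojlim^1$ problem. This is Krishna's comparison theorem $\Omega^*_T(X)\otimes_{\Omega^*_T(\mrm{pt})}\mbb{L}^*\simeq\Omega^*(X)$, which \cite{KU13} invokes. Freeness of the Stanley--Reisner ring and regular sequences play no role in it. Once both inputs are in hand, comparing the graded power series quotient with $T_\Sigma/(I_1^\Sigma+I_2^\Sigma+I^\Sigma_{\text{nil}})$ is routine, using $I^\Sigma_{\text{nil}}$ to truncate to polynomials.
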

    
    \begin{rmk}\label{rmk:compare-toric}
        Using \cref{thrm:KU} and \cref{theo:universal-CH}, one recovers the standard presentation for 
        $\mrm{CH}^*(X_M)$ as $\mrm{CH}^*(X_M)\simeq \Omega^*(X_M)\otimes_{\mbb{L}^*}\mbb{Z}$. Heuristically, simply replace the pair $(\mbb{L}^*,F_U)$ with the pair $(\mbb{Z},F_A)$ everywhere in sight: $T_\Sigma$ becomes $\mbb{Z}[  x_\rho : \rho \text{ a ray of }\Sigma]$, $I_2^\Sigma$ is generated by the ordinary sums $\sum_{\text{$\rho$ ray}} \langle \chi, v_\rho \rangle x_\rho$, where $\chi \in N^\vee$, and $I_1^\Sigma$ remains the same.
    \end{rmk}

     Let $\chi_1,\dotsc,\chi_n$ be generators for the character lattice $N^\vee$. Consider the following ideal in $T_\Sigma$:
     \[(I_2^\Sigma)':=\left\langle \left.\sum_{\text{$\rho$ ray}}\right.^{F_U} \langle \chi_i, v_\rho \rangle \cdot_{F_U} x_\rho : i\in [n] \right\rangle.\]
     Observe that $(I_2^\Sigma)'$ is contained in $I_2^\Sigma$. We will now prove \cref{lem:J_basis}, which implies that only finitely many relations are required in the ring presentation for $\Omega^*(X_\Sigma)$ of \cref{thrm:KU}.    
      
    \begin{lem}\label{lem:J_basis}
    There is an $\mbb{L}^*$-algebra isomorphism
    \[\frac{T_\Sigma}{I_1^\Sigma+(I_2^\Sigma)'+I_{\text{nil}}^\Sigma}\to \Omega^*(X_\Sigma),\quad x_\rho\mapsto [D_\rho \hookrightarrow X_{\Sigma}].\]
    \end{lem}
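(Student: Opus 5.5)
By \cref{thrm:KU}, it suffices to prove the equality of ideals
\[I_1^\Sigma+(I_2^\Sigma)'+I_{\text{nil}}^\Sigma = I_1^\Sigma+I_2^\Sigma+I_{\text{nil}}^\Sigma\]
in $T_\Sigma$. The inclusion $\subseteq$ is immediate since $(I_2^\Sigma)'\subseteq I_2^\Sigma$. For the reverse inclusion, we must show that for every character $\chi\in N^\vee$ the element $r_\chi:=\sum^{F_U}_{\rho}\langle\chi,v_\rho\rangle\cdot_{F_U}x_\rho$ lies in $J:=I_1^\Sigma+(I_2^\Sigma)'+I_{\text{nil}}^\Sigma$. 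We work in the quotient ring $A:=T_\Sigma/J$ and write $\bar r_\chi$ for the image of $r_\chi$.

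Since $I_{\text{nil}}^\Sigma\subseteq J$, every $x_\rho$ is nilpotent in $A$. Hence every power series in the $x_\rho$ with zero constant term is nilpotent in $A$, and the formal group law operations $+_{F_U}$ and $-_{F_U}$ make sense on the nilradical of $A$, or at least on the ideal generated by the $x_\rho$. Because each $x_\rho$ is nilpotent, all sums are finite there, so the formal group law axioms (associativity and commutativity) hold on this ideal. The key observation is additivity in $\chi$: since $\langle\chi+\chi',v_\rho\rangle=\langle\chi,v_\rho\rangle+\langle\chi',v_\rho\rangle$, \cref{lem:distributive-FGL} together with associativity and commutativity of $F_U$ gives
\[r_{\chi+\chi'}=r_\chi+_{F_U}r_{\chi'}\]
as an identity of formal power series in $T_\Sigma$. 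Moreover $r_{-\chi}=-_{F_U}r_\chi$, using $(-a)\cdot_{F_U}g=-_{F_U}(a\cdot_{F_U}g)$, which follows from \cref{lem:formal_inv} and \cref{lem:distributive-FGL}. Here $a\cdot_{F_U}g$ for negative $a$ is interpreted via the formal inverse, as is implicit in the definition of $I_2^\Sigma$.

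Now write $\chi=\sum_i m_i\chi_i$ with $m_i\in\mbb{Z}$. Iterating the additivity gives
\[r_\chi=\sum^{F_U}_{i}\,m_i\cdot_{F_U}r_{\chi_i}.\]
By \cref{lem:formal_dif} applied with $I=\emptyset$, any $F_U$-sum of power series $g_j$ with zero constant term has the form $\sum_j g_jh_j$. Likewise $m\cdot_{F_U}g$ and $-_{F_U}g$ are multiples of $g$, since $g\mid F_U(g,g)$ and $-_{F_U}g=g\cdot(\text{unit})$. Therefore $r_\chi$ lies in the ideal generated by $r_{\chi_1},\dots,r_{\chi_n}$, which is $(I_2^\Sigma)'$.

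The main obstacle is justifying the formal identities in $T_\Sigma$ itself, not merely in $A$. The identity $r_{\chi+\chi'}=F_U(r_\chi,r_{\chi'})$ is a genuine power-series identity, because it follows purely from the axioms of \cref{defn:FGL} in the complete ring $T_\Sigma$: the substitutions converge since every $x_\rho$ has positive order. So the argument can in fact be carried out directly in $T_\Sigma$, and the divisibility statements give the inclusion $I_2^\Sigma\subseteq(I_2^\Sigma)'$ outright. The nilpotence ideal is then only a convenience. The only delicate point is handling negative coefficients $m_i$ carefully through the formal inverse.
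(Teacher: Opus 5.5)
Your proposal is correct and follows essentially the same route as the paper. Both use \cref{thrm:KU} to reduce to showing that $r_\chi=\sum^{F_U}_\rho\langle\chi,v_\rho\rangle\cdot_{F_U}x_\rho$ lies in the ideal generated by the $r_{\chi_i}$. Both then write $\chi=\sum_i m_i\chi_i$ and use \cref{lem:distributive-FGL} with the formal group law axioms to express $r_\chi$ as the $F_U$-sum of the terms $m_i\cdot_{F_U}r_{\chi_i}$.

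The only difference is in the last step. You observe that this identity, and the divisibility $F_U(a,b)\in(a,b)$, already hold in the complete ring $T_\Sigma$. That gives $I_2^\Sigma=(I_2^\Sigma)'$ outright, so the paper's appeal to nilpotence of the $x_\rho$ in the quotient is not needed. This observation is correct.
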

    \begin{proof}
    In light of Theorem~\ref{thrm:KU} and the fact that $(I_2^\Sigma)' \subseteq I_2^\Sigma$, it suffices to prove that the image of $I_2^\Sigma$ in $T_{\Sigma}/(I_1^{\Sigma}+I_\text{nil}^\Sigma)$ is contained in the image of $(I_2^\Sigma)'$ in $T_{\Sigma}/(I_1^{\Sigma}+I_\text{nil}^\Sigma)$. Let $\chi = \sum_{i=1}^n a_n \chi_n$ be an arbitrary character in $N^\vee$. It follows from \cref{lem:distributive-FGL} and the axioms of a formal group law that
    \begin{align*}
    \left.\sum_{\text{$\rho$ ray}}\right.^{F_U} \left\langle \chi, v_\rho \right\rangle \cdot_{F_U} x_\rho &= \left.\sum_{\text{$\rho$ ray}}\right.^{F_U} \left\langle \left(\sum_{1\leq i \leq n} a_i\chi_i\right), v_\rho \right\rangle \cdot_{F_U} x_\rho\\
    &= \left.\sum_{\text{$\rho$ ray}}\right.^{F_U} \left.\sum_{1\leq i \leq n}\right.^{F_U} \left(a_i\langle \chi_i, v_\rho \rangle\right) \cdot_{F_U} x_\rho\\&
    = \left.\sum_{1\leq i \leq n}\right.^{F_U} \left( a_i \cdot_{F_U} \left.\sum_{\text{$\rho$ ray}}\right.^{F_U} \langle \chi_i, v_\rho \rangle \cdot_{F_U} x_\rho\right).
    \end{align*}
    As $x_\rho$ is nilpotent in $T_{\Sigma}/(I_1^{\Sigma}+I_\text{nil}^\Sigma)$, it follows that the last expression is contained in the image of $(I_2^\Sigma)'$ in $T_{\Sigma}/(I_1^{\Sigma}+I_\text{nil}^\Sigma)$.
    \end{proof}
    
\subsection{The Feichtner-Yuzvinsky presentation}\label{subsection:FY}
    We give a Feichtner--Yuzvinsky style presentation for the algebraic cobordism ring of the toric variety of the Bergman fan of a loopless matroid in \cref{prop:FY-presentation}. 

    Let $M$ be a loopless matroid of rank $r$ on ground set $E$ with flats $\mcal{F}$. Recall the notation $X_M:=X_{\Sigma_M}$ for the toric variety of the Bergman fan $\Sigma_M$ of $M$. The character lattice of $\Sigma_M$ is the lattice $N_E^\vee:=(\mathbb{Z}^E/\mathbb{Z}e_E)^*$ in $(\mathbb{R}^E/\mathbb{R}e_E)^*$, and the set $\mcal{B}_M:=\{e_i^*-e_j^*: i,j\in E\}$ contains a basis for the lattice $N_E^\vee$. Consider the formal power series ring
    \[T_M:=\mathbb{L}^*\llbracket x_G : \text{ $G$ a proper and non-empty flat of $M$}\rrbracket\]
    and the following two ideals in $T_M$:
    \[I_1^M:= \langle x_Gx_{G'}: \text{ $G$, $G'$ incomparable flats}\rangle, \text{ and}\]
    \[I_2^M:= \left\langle \left.\sum_{i_0\in G}\right.^{F_U} x_G +_{F_U} \left.\sum_{j_0\in G'}\right.^{F_U} \left(-_{F_U}x_{G'}\right) : i_0\in E \text{ and }j_0 \in E\right \rangle.\]
    \begin{defn}
        The \textbf{algebraic cobordism ring of $M$} is
        \[\Omega^*(M):=\frac{T_M}{I_1^M+I_2^M}.\]
    \end{defn}

    A priori, it is unclear whether $\Omega^*(M)$ is isomorphic to $\Omega^*(X_M)$.
    For the remainder of this section, we verify that indeed $\Omega^*(M)\simeq \Omega^*(X_M)$. Consider the ideal $I^M_{\text{nil}}:=\langle x_G^{r} : \text{$G$ is a proper, non-empty flat of $M$}\rangle$ in $T_M$. Below, we prove \cref{lem:nilpotents-contained-in-smaller-ideal}, which implies that the generators $x_G$ are nilpotent in $\Omega^*(M)$.
     
     \begin{lem}\label{lem:nilpotents-contained-in-smaller-ideal}
        The ideal $\langle x_G : G \text{ a proper, non-empty flat of } M\rangle^r$
        is contained in $I_1^M+I_2^M$. In particular, the ideal
        $I^M_{\text{nil}}$ is contained in $I_1^M+I_2^M$.
     \end{lem}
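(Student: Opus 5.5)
The plan is to adapt the Feichtner--Yuzvinsky Gröbner-basis argument for $\CH^*(X_M)$ to the formal group law setting, using \cref{lem:formal_dif} to replace the linear relations by relations with unit coefficients. The ideal $\langle x_G\rangle^r$ is generated by monomials $x_{G_1}\cdots x_{G_r}$ with $G_1,\dots,G_r$ proper non-empty flats, not necessarily distinct. If two of these flats are incomparable, the monomial already lies in $I_1^M$. So it suffices to treat monomials $m=x_{G_1}^{a_1}\cdots x_{G_k}^{a_k}$ supported on a flag $G_1\subsetneq\dots\subsetneq G_k$ with $\sum a_s=r$. Set $G_0=\emptyset$. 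Since $\sum_s(\mrm{rk}\,G_s-\mrm{rk}\,G_{s-1})=\mrm{rk}\,G_k\le r-1<r$, some index $s$ has $a_s>\mrm{rk}\,G_s-\mrm{rk}\,G_{s-1}$. I will prove the stronger statement that every flag monomial with such an ``excess'' exponent lies in $I_1^M+I_2^M$, by induction on $\mrm{rk}\,G_s-\mrm{rk}\,G_{s-1}$ (and a secondary induction downward on the flag).

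The key step is a formal analogue of the linear relations. Take $i\in E$ and $j\notin G_s$. Applying \cref{lem:formal_dif} to the two $F_U$-sums in a generator of $I_2^M$ (rewrite it as the difference of $\sum^{F_U}_{F\ni i}x_F$ and $\sum^{F_U}_{F\ni j}x_F$, up to multiplication by a unit, by $F_U$-adding $\sum^{F_U}_{F\ni j}x_F$) expresses, modulo $I_2^M$, the difference as
\[\sum_{F\ni i,\,F\not\ni j}x_F\,h_F-\sum_{F\ni j,\,F\not\ni i}x_F\,h'_F,\]
where $h_F,h'_F$ are power series with constant term $1$, hence units. The common sub-sum over flats containing both $i$ and $j$ is what \cref{lem:formal_dif} removes. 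When this is multiplied by $m'=m/x_{G_s}$, every term $x_F$ with $F$ incomparable to some $G_t$ dies in $I_1^M$.

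In the base case the rank gap is $1$ and $a_s\ge 2$. Choose $i\in G_s\setminus G_{s-1}$. The surviving terms of the first sum are then $x_F$ with $F\ni i$, $F\not\ni j$ and $F$ comparable to the whole flag. Such an $F$ satisfies $F\supseteq G_s$, since $G_{s-1}\vee i=G_s$. Similarly, in the second sum a flat $F\ni j$ comparable to the flag and not containing $i$ must lie strictly above $G_s$ (it does not lie below $G_s$ since $j\notin G_s$), but then it contains $i$, a contradiction; so the second sum contributes nothing. Hence $x_{G_s}h_{G_s}\,m'\equiv-\sum_{F\supsetneq G_s}x_F h_F\,m' \pmod{I_1^M+I_2^M}$. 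Each monomial on the right has the same total degree and exponent $a_s-1$ on $G_s$, and the new flat $F$ is inserted above $G_s$. The general inductive step picks $i\in G_s\setminus G_{s-1}$ in the same way and uses the analogous rewriting; it produces monomials in which either the flat $F=G_{s-1}\vee\dots$ refines the flag and shrinks the relevant rank gap, or the excess moves upward. In either case the induction hypothesis applies, after the unit $h_{G_s}$ is inverted. Termination holds because ranks are bounded by $r-1$ and the degree stays fixed.

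I expect the main obstacle to be making this reduction rigorous inside the power series ring $T_M$. The units $h_F$ introduce infinitely many higher-order terms, and $\mbb{L}^*$ is not noetherian, so a naive ``reduce term by term'' argument yields an infinite sum whose membership in the finitely generated ideal $I_1^M+I_2^M$ is not automatic. My first approach is to run the induction on monomials of fixed degree $r$ while carrying the units along as multipliers: since each $h_F$ is invertible in $T_M$, one writes $m=u\cdot(\text{explicit element of } I_1^M+I_2^M)+\sum(\text{monomials of degree }r\text{ with strictly smaller invariant})\cdot(\text{unit})$. The induction then takes place over the finite set of flag-types, giving a finite expression rather than a limit. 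The secondary claim $I^M_{\text{nil}}\subseteq I_1^M+I_2^M$ is immediate, since $x_G^r\in\langle x_G\rangle^r$.
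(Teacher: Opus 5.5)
Your local rewriting step is correct and is essentially the paper's. With $i\in G_s\setminus G_{s-1}$, $j\notin G_s$ and $a_s\ge 2$, the factor $x_{G_s}$ left in $m'$ kills the entire second sum, and \cref{lem:formal_dif} turns everything into a finite identity with unit coefficients. The gap is in the induction that is supposed to absorb this step.

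Taken literally, your ``stronger statement'' is false. Take the Boolean matroid of rank $3$ on $\{1,2,3\}$. The monomial $x_{\{1\}}^2$ has $a_1=2>1=\mrm{rk}\{1\}-\mrm{rk}\,\emptyset$. Your own base case (with $j=2$) gives $x_{\{1\}}^2\equiv -u\,x_{\{1\}}x_{\{1,3\}}$ with $u$ a unit. Specializing along the additive law $\mbb{L}^*\to\mbb{Z}$, this reads $x_{\{1\}}^2=-x_{\{1\}}x_{\{1,3\}}$ in $\CH^*(X_M)$. That is minus the class of a torus-fixed point of the permutohedral surface, which is nonzero, so $x_{\{1\}}^2\notin I_1^M+I_2^M$.

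The breakdown happens exactly at the terms $F\supsetneq G_s$ when $a_s=\mrm{rk}\,G_s-\mrm{rk}\,G_{s-1}+1$. The new monomial $m'x_F$ loses its excess at $G_s$ and, without a degree hypothesis, may have none left. If you instead keep the degree equal to $r$, as your last paragraph suggests, the claim is no longer stronger than the lemma. The induction as described still does not close, because the surviving excess can sit at a flat with a \emph{larger} rank gap. For instance, take the Boolean matroid of rank $5$ with $G_1=\{1\}$, $G_2=\{1,2,3,4\}$ and $j=5$. Reducing $x_{G_1}^2x_{G_2}^3$ at its only excess $s=1$ produces $x_{G_1}x_{G_2}^4$, whose only excess has gap $3$. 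So an induction with the rank gap as primary parameter is not well-founded. The sentence ``in either case the induction hypothesis applies'' is precisely the unproved step.

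The missing idea is to drop the excess bookkeeping, carry the condition $\sum_t a_t\ge r$ through every step, and induct downward on the number $k$ of distinct flats in the support. This is the paper's proof. A chain of proper non-empty flats has at most $r-1$ members, so $\sum_t a_t\ge r>k$ forces some $a_s\ge 2$, which is all your rewriting needs.

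Also choose $j\in G_{s+1}\setminus G_s$ (with $G_{k+1}=E$) rather than an arbitrary $j\notin G_s$. Then every surviving $F\neq G_s$ lies strictly between $G_{s-1}$ and $G_{s+1}$ and is not already in the chain. So each $m'x_F$ is a chain monomial of the same degree on $k+1$ flats, and the induction closes. With your arbitrary $j$, terms $F=G_u$ with $u>s$ can survive, and you would need a lexicographic measure such as $(k,\sum_t a_t\,\mrm{rk}\,G_t)$.

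Finally, the obstacle you anticipate does not arise. Each step is a finite identity expressing $m$ as an element of $I_1^M+I_2^M$ plus finitely many terms $m'x_F$ times power series. Once those finitely many monomials lie in the ideal, so do all their multiples, so no infinite reduction or noetherianity is needed.
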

     \begin{proof}
     To prove the claim, we show that if $x_{G_1}^{d_1}x_{G_2}^{d_2}\cdots x_{G_k}^{d_k}$ is a monomial with $d_1+d_2+\ldots+d_k \geq r$, then $x_{G_1}^{d_1}x_{G_2}^{d_2}\cdots x_{G_k}^{d_k} =0$ in $\Omega^*(M)$. By the relations of $I_1^M$, if $G_1,\ldots , G_k$ is not a  chain of proper, non-empty flats, then $x_{G_1}^{d_1}x_{G_2}^{d_2}\cdots x_{G_k}^{d_k} =0$. Thus, in particular, our claim holds when $k\geq r$. We will now assume that $G_1 \subsetneq G_2 \subsetneq \ldots \subsetneq G_k$ is a chain of proper, non-empty flats and prove our claim by induction on $r-k$. The base case where $r-k=0$ is covered by our previous observation. For the inductive step, let $r-k\geq 1$ and assume that every monomial $x_{G_1'}^{d_1'}\cdots x_{G_{k'}'}^{d_{k'}'}$ supported on a chain of flats with $d_1'+\ldots +d_{k'}' \geq r$ and $k'>k$ is equal to zero in $\Omega^*(M)$. As $r-k\geq 1$, there is an index $\ell$ such that $d_\ell\geq 2$.
     Fix elements $i_0\in  G_{\ell+1}\setminus G_\ell$ and $j_0\in G_\ell\setminus G_{\ell-1}$ where we say $G_0=\emptyset$ and $G_{k+1}=E$. By the relations of $I_2^M$, we have that 
     \[x_{G_\ell} = \left(\left.\sum_{i_0\in H}\right.^{F_U} x_H\right) +_{F_U} \left(\left.\sum_{j_0\in H', H'\neq G_\ell}\right.^{F_U} (-_{F_U}x_{H'})\right).\]
     Using this identity and the relations of $I_1^M$, we get
     \begin{align}
             x_{G_1}^{d_1}\cdots x_{G_\ell}^{d_\ell}\cdots x_{G_k}^{d_k} &= x_{G_1}^{d_1}\cdots x_{G_\ell}^{d_\ell-1}\cdots x_{G_k}^{d_k}\left(\left(\left.\sum_{i_0\in H}\right.^{F_U} x_H \right)+_{F_U} \left(\left.\sum_{j_0\in H', H'\neq G_\ell}\right.^{F_U} (-_{F_U}x_{H'})\right) \right)\nonumber
             \\&=
             x_{G_1}^{d_1}\cdots x_{G_\ell}^{d_\ell-1}\cdots x_{G_k}^{d_k}\left(\left(\left.\sum_{\substack{G_{\ell}\cup i_0\subseteq  H}}\right.^{F_U} x_H \right)+_{F_U} \left(\left.\sum_{\substack{ G_{\ell-1}\cup j_0\subseteq H', G_\ell\neq H'}}\right.^{F_U} (-_{F_U}x_{H'})\right) \right)\nonumber
             \\&=
             x_{G_1}^{d_1}\cdots x_{G_\ell}^{d_\ell-1}\cdots x_{G_k}^{d_k}\left(\left.\sum_{\substack{ G_{\ell-1}\cup j_0\subseteq H'\\ G_{\ell}\cup i_0\not\subseteq H',\text{ }G_\ell\neq H'}}\right.^{F_U} (-_{F_U}x_{H'})\right)\nonumber
             \\
         &= x_{G_1}^{d_1}\cdots x_{G_\ell}^{d_\ell-1}\cdots x_{G_k}^{d_k} \left(\left.\sum_{\substack{G_{\ell-1}\cup j_0 \subseteq H' \subsetneq G_{\ell+1} \\ i_0\not\in H', H'\neq G_\ell}}\right.^{F_U} (-_{F_U}x_{H'})\right).\label{eq:the_end}
     \end{align}
    By Lemma~\ref{lem:formal_inv}, the quantity $-_F x_{H'}$ in the sum of expression (\ref{eq:the_end}) can be written as $x_{H'} f_{H'}$ where $f_{H'}$ is an element of $T_M$. By Lemma~\ref{lem:formal_dif},
    the large formal sum of expression (\ref{eq:the_end}) can be written, in a highly non-canonical way, as a finite sum of terms of the form $x_{H'}f_{H'}g_{H'}$ where each $g_{H'}$ is an element of $T_M$. This lets us write
    \[x_{G_1}^{d_1}\cdots x_{G_\ell}^{d_\ell}\cdots x_{G_k}^{d_k} = \sum_{\substack{G_{\ell-1}\cup j_0 \subseteq H' \subsetneq G_{\ell+1} \\ i_0\not\in H', H'\neq G_\ell}} x_{G_1}^{d_1}\cdots x_{G_\ell}^{d_\ell-1}\cdots x_{G_k}^{d_k}x_{H'} f_{H'}g_{H'}.\]
    Each monomial $x_{G_1}^{d_1}\cdots x_{G_\ell}^{d_\ell-1}\cdots x_{G_k}^{d_k} x_{H'}$ is zero in $\Omega^*(M)$ by our inductive hypothesis. As $x_{G_1}^{d_1}\cdots x_{G_\ell}^{d_\ell}\cdots x_{G_k}^{d_k}$ is a finite sum of elements that are zero in $\Omega^*(M)$, our claim now follows.
    \end{proof}

\begin{prop}\label{prop:FY-presentation}
    There is an $\mbb{L}^*$-algebra isomorphism
    \[\varphi\colon \Omega^*(M)\to \Omega^*(X_M), \quad x_{G}\mapsto [D_{\rho_G} \hookrightarrow X_M]  \text{ for all $G\in\mcal{F}\setminus \{E,\emptyset\}$}.\]
\end{prop}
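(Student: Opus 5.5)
We reduce the statement to \cref{lem:J_basis} applied to the Bergman fan $\Sigma_M$. The rays of $\Sigma_M$ are the $\rho_G$ for $G$ a proper non-empty flat, with primitive generators $\overline{e}_G$. This gives an identification $T_{\Sigma_M}=T_M$ via $x_{\rho_G}\leftrightarrow x_G$, and $n=\dim N_E\otimes\mbb{R}=|E|-1$. Under this identification \cref{lem:J_basis} gives an isomorphism
\[T_M/(I_1^{\Sigma_M}+(I_2^{\Sigma_M})'+I^{\Sigma_M}_{\text{nil}})\xrightarrow{\sim}\Omega^*(X_M), \qquad x_G\mapsto[D_{\rho_G}\hookrightarrow X_M].\]
It therefore suffices to show that the ideals $I_1^{\Sigma_M}+(I_2^{\Sigma_M})'+I^{\Sigma_M}_{\text{nil}}$ and $I_1^M+I_2^M$ coincide in $T_M$. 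The map $\varphi$ is then the induced isomorphism.

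First, compare $I_1$. A collection of rays $\rho_{G_1},\dots,\rho_{G_k}$ spans a cone of $\Sigma_M$ exactly when the distinct $G_i$ form a flag. Hence a monomial $x_{G_1}\cdots x_{G_k}$ with the $G_i$ not a chain is divisible by some $x_Gx_{G'}$ with $G,G'$ incomparable, and conversely. A monomial with repeated indices along a chain does span a cone, since the rays are the same. So $I_1^{\Sigma_M}=I_1^M$.

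Next, compare $I_2$. Choose the characters $\chi$ to range over $\mcal{B}_M=\{e_{i_0}^*-e_{j_0}^*\}$. This set contains a basis of $N_E^\vee$, and the proof of \cref{lem:J_basis} works verbatim for any finite generating set of characters, so we may use it. We have $\langle e_{i_0}^*-e_{j_0}^*,\overline{e}_G\rangle=[i_0\in G]-[j_0\in G]\in\{1,0,-1\}$. Using commutativity and associativity of $F_U$, together with the identity $1\cdot_{F_U}x=x$ and $(-1)\cdot_{F_U}x=-_{F_U}x$, the relation for $\chi$ becomes the formal sum of $x_G$ over $G\ni i_0$, $G\not\ni j_0$, plus the formal sum of $-_{F_U}x_{G'}$ over $G'\ni j_0$, $G'\not\ni i_0$. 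This differs from the $I_2^M$ generator by the terms with $G$ containing both $i_0$ and $j_0$, which appear as $x_G+_{F_U}(-_{F_U}x_G)=0$. Rearranging with the group law axioms shows the two generators are equal as power series. Hence $(I_2^{\Sigma_M})'=I_2^M$, with the sign convention checked carefully (the $e_{i_0}^*-e_{j_0}^*$ versus the order of $i_0,j_0$ is symmetric anyway).

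Finally, show that $I^{\Sigma_M}_{\text{nil}}=\langle x_G^{|E|}\rangle$ is contained in $I_1^M+I_2^M$. By \cref{lem:nilpotents-contained-in-smaller-ideal}, $x_G^r\in I_1^M+I_2^M$, and since $r\le|E|$ we get $x_G^{|E|}\in I_1^M+I_2^M$. The two ideals therefore agree, so $\Omega^*(M)=T_M/(I_1^M+I_2^M)$ is the domain of the isomorphism of \cref{lem:J_basis}, which sends $x_G\mapsto[D_{\rho_G}\hookrightarrow X_M]$.

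The main obstacle is the middle step: rigorously identifying the formal-group-law sums, including the cancellation of flats containing both $i_0$ and $j_0$ in power series over $\mbb{L}^*$. This is routine given the formal group law axioms and \cref{lem:formal_inv}.
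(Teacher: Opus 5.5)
Your proposal is correct and follows the paper's proof essentially step for step: you reduce to \cref{lem:J_basis}, identify $I_1^M$ with $I_1^{\Sigma_M}$ via chains of flats, identify $I_2^M$ with $(I_2^{\Sigma_M})'$ using the generators $\mcal{B}_M$ and the cancellation $x_G +_{F_U} (-_{F_U}x_G)=0$ for flats containing both $i_0$ and $j_0$, and absorb $I^{\Sigma_M}_{\text{nil}}$ using \cref{lem:nilpotents-contained-in-smaller-ideal}. Your explicit checks that $n+1=|E|\geq r$ and that \cref{lem:J_basis} still applies with the larger generating set $\mcal{B}_M$ fill in details the paper leaves implicit.
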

\begin{proof}
  By \Cref{lem:J_basis}, it will suffice to show that the isomorphism $\varphi:T_M \to T_{\Sigma_M}$ defined by $ \varphi(x_{G})= x_{\rho_G}$ has the property that
  \[\varphi(I_1^M)+ \varphi(I_2^M) = I_1^{\Sigma_M}+(I_2^{\Sigma_M})'+I_{\text{nil}}^{\Sigma_M}. \]
  If $G$  and $G'$ are incomparable flats, then there is no chain of flats which contains both $G$ and $G'$, so $\varphi(x_Gx_{G'})=x_{\rho_G}x_{\rho_{G'}}\in I_1^{\Sigma_M}$. Similarly, if the rays $\rho_{G_1},\dotsc,\rho_{G_k}$ do \textit{not} generate a cone in $\Sigma_M$, then there are two indices $i,j$ for which $G_i$ and $G_j$ are incomparable flats. This shows that $\varphi(I_1^M)=I_1^{\Sigma_M}$. After choosing $\mcal{B}_M$ as our set of generators for the $\mbb{Z}$-module $N_E^\vee$ and a small algebraic modification using \cref{lem:distributive-FGL}, we see that the generators of $I_2^M$ are mapped to the generators of $(I_2^{\Sigma_M})'$ by $\varphi$. The rest of the claim now follows from \cref{lem:nilpotents-contained-in-smaller-ideal}.
\end{proof}

\begin{rmk}\label{rmk:motivation}
    Let $\mrm{h}^*$ be any oriented cohomology theory and $(\mrm{h}^*(\mrm{pt}), F_{\mrm{h}^*})$ be the corresponding formal group law. Let $S$ be the sub $\mrm{h}^*(\mrm{pt})$-algebra of $\mrm{h}^*(X_M)$ generated by the first Chern classes of the line bundles $\mathcal{O}_{X_M}(D_{\rho_G})$ corresponding to the toric divisors of $X_M$. As $\Omega^*(X_M)$ is generated by first Chern classes of line bundles and as morphisms of oriented cohomology theories preserve first Chern classes, the image of the universal morphism $\Omega^*(X_M) \to \mrm{h}^*(X_M)$ is equal to $S$. \Cref{prop:FY-presentation} tells us that $S$ can be written as a quotient of the ring 
    \[\frac{\mrm{h}^*(\mrm{pt})[x_G: G \text{ a proper non-empty flat of } M]}{\langle x_Gx_{G'}: G,G' \text{ incomparable}\rangle + \langle\left.\sum_{i_0\in G}\right.^{F_{\mrm{h}^*}} x_G +_{F_{\mrm{h}^*}} \left.\sum_{j_0\in G'}\right.^{F_{\mrm{h}^*}} \left(-_{F_{\mrm{h}^*}}x_{G'}\right) : i_0,j_0\in E \rangle}. \]
    This fact can be observed independently of \Cref{prop:FY-presentation}. The monomial relations hold because if $G$ and $G'$ are incomparable, then $D_{\rho_G}\cap D_{\rho_{G'}}=\emptyset$ and $c_1(\mathcal{O}_{X_M}(D_{\rho_G}))\cdot c_1(\mathcal{O}_{X_M}(D_{\rho_{G'}}))= 0$ in $S$ by \Cref{lem:disjoint-cycles}. The second set of relations follow from Quillen's formula for computing the Chern class of a tensor product of line bundles and the relations of $\mrm{Pic}(X_M)$. Explicitly, for all $i_0,j_0\in E$, we have that \[\bigotimes_{i_0\in G}\mcal{O}_{X_M}(D_{\rho_G})\otimes \bigotimes_{j_0\in G'}\mcal{O}_{X_M}(-D_{\rho_{G'}})=\mcal{O}_{X_M}\quad\text{in } \mrm{Pic}(X_M)\]
    and thus
    \begin{align*}
    0=c_1(\mcal{O}_{X_M})&=c_1\left(\bigotimes_{i_0\in G}\mcal{O}_{X_M}(D_{\rho_G})\otimes \bigotimes_{j_0\in G'}\mcal{O}_{X_M}(-D_{\rho_{G'}})\right)
    \\&=\left.\sum_{i_0\in G}\right.^{F_{\mrm{h}^*}} (c_1(\mcal{O}_{X_M}(D_{\rho_G})))+_{F_{\mrm{h}^*}} \left.\sum_{j_0\in G'}\right.^{F_{\mrm{h}^*}} \left(c_1(\mcal{O}_{X_M}(-D_{\rho_{G'}})\right)
    \end{align*}
    in $S$. If $\mrm{h}^*$ is not a free oriented cohomology theory (\Cref{dfn:free-theory}), it is possible to have $\mrm{h}^*(X_M) \not\simeq S$; see \Cref{rmk:binder}.
\end{rmk}

    We now define a different presentation for $\Omega^*(M)$ in \cref{cor:second-FY-presentation}, which includes a generator $x_E$ for the non-proper flat $E$. This presentation follows from \cref{prop:FY-presentation} and the formal group law manipulation described in \cref{lem:distributive-FGL}. First, consider the formal power series ring, \[\mcal{T}_M:= \mathbb{L}\llbracket x_G : \text{ $G$ a non-empty flat of $M$}\rrbracket.\]
    We define the following two ideals in $\mcal{T}_M$:
    \[\mcal{I}_1^M:=\langle x_Gx_{G'}: \text{ $G$ and $G'$ are incomparable}\rangle,\quad\text{and}\quad \quad \mcal{I}_2^M:=\left\langle \left.\sum_{j_0\in G}\right.^{F_U} x_G : j_0\in E\right\rangle.\]

\begin{cor}\label{cor:second-FY-presentation}
    Fix any $i_0\in E$. There is an $\mbb{L}^*$-algebra isomorphism
    \[\frac{\mcal{T}_M}{\mcal{I}_1^M+\mcal{I}_2^M}\to  \frac{T_{\Sigma_M}}{I_1^{\Sigma_M}+I_2^{\Sigma_M}} = \Omega^*(X_M),\quad x_G\mapsto\begin{cases}
        x_G,\quad\quad \quad \quad \quad \quad  \quad \text{$G\neq E$};\\
        \left.\sum_{i_0\in H}\right.^{F_U} (-_{F_U}x_H),\quad \text{$G=E$},
    \end{cases}\]
    where in the case $G=E$, the sum is over \textit{all} flats $H$ that contain $i_0\in E$.
\end{cor}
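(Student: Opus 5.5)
We prove \cref{cor:second-FY-presentation} by comparing it with \cref{prop:FY-presentation}: we show that eliminating the extra generator $x_E$ from $\mcal{T}_M/(\mcal{I}_1^M+\mcal{I}_2^M)$ yields exactly $\Omega^*(M)=T_M/(I_1^M+I_2^M)$. Write $s_{i_0}:=\sum^{F_U}_{i_0\in H,\,H\neq E} x_H$, a sum over proper non-empty flats, and let $\psi\colon \mcal{T}_M\to T_M$ be the $\mbb{L}^*$-algebra map fixing $x_G$ for $G\neq E$ and sending $x_E\mapsto -_{F_U}s_{i_0}$. This power series has zero constant term, so $\psi$ is a well-defined continuous map. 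We interpret the displayed target formula through $\psi$: since $x_E$ is the only flat containing everything, its image is the formal sum of the $-_{F_U}x_H$ over the proper flats $H\ni i_0$. This agrees, modulo $I_1^M+I_2^M$, with $-_{F_U}s_{i_0}$, because $-_{F_U}$ is additive for $F_U$ (commutativity and associativity).

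The key step is to check that $\psi$ is surjective with kernel $\mcal{I}_1^M+\mcal{I}_2^M+(x_E+_{F_U}s_{i_0})$, and that the last generator already lies in $\mcal{I}_2^M$.

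\emph{Surjectivity.} This is immediate, since every $x_G$ with $G\neq E$ lies in the image.

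\emph{Kernel.} The kernel of $\psi$ is generated by $x_E-\psi(x_E)$. Because $-_{F_U}$ is the formal inverse, this generator equals a unit multiple of $x_E+_{F_U}s_{i_0}$. Here we use \cref{lem:formal_dif} with $I=\{E\}$ in reverse: for $u=x_E$ and $v=-_{F_U}s_{i_0}$, the difference $u+_{F_U}(-_{F_U}v)$ equals $(u-v)$ times a unit. The element $x_E+_{F_U}s_{i_0}$ is precisely the generator $\sum^{F_U}_{i_0\in G}x_G$ of $\mcal{I}_2^M$ for $j_0=i_0$, since $E\ni i_0$. Hence $\mcal{T}_M/(\mcal{I}_1^M+\mcal{I}_2^M)\cong T_M/(\psi(\mcal{I}_1^M)+\psi(\mcal{I}_2^M))$.

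We then compare the images of the ideals.

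\emph{The monomial ideal.} $E$ is comparable to every flat, so $\mcal{I}_1^M$ involves no $x_E$ and $\psi(\mcal{I}_1^M)=I_1^M$.

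\emph{The linear ideal.} For each $j_0$, the $\mcal{I}_2^M$-generator maps to
\[\psi\Big(\sum^{F_U}_{j_0\in G}x_G\Big)=\Big(\sum^{F_U}_{j_0\in G\neq E}x_G\Big)+_{F_U}\Big(\sum^{F_U}_{i_0\in H\neq E}(-_{F_U}x_H)\Big),\]
using additivity of $-_{F_U}$. This is exactly the $I_2^M$-generator indexed by the pair $(j_0,i_0)$. Thus $\psi(\mcal{I}_2^M)$ is generated by the generators of $I_2^M$ with second index fixed at $i_0$.

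The main obstacle is showing that these generators suffice, i.e.\ that they generate all of $I_2^M$. Write $a_j$ for $\sum^{F_U}_{j\in G\neq E}x_G$. A general generator is $a_{j}-_{F_U}a_{j'}$, and it decomposes as
\[a_j-_{F_U}a_{j'}=(a_j-_{F_U}a_{i_0})-_{F_U}(a_{j'}-_{F_U}a_{i_0}).\]
If two elements $p,q$ lie in an ideal, then so does $p-_{F_U}q$: we have $p-_{F_U}q=p+(-_{F_U}q)+\sum a_{k,l}p^k(-_{F_U}q)^l$, and each term lies in the ideal because $-_{F_U}q$ is divisible by $q$ (\cref{lem:formal_inv}). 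Hence $a_j-_{F_U}a_{j'}$ lies in the ideal generated by the $i_0$-indexed generators, so $\psi(\mcal{I}_2^M)=I_2^M$.

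Therefore $\mcal{T}_M/(\mcal{I}_1^M+\mcal{I}_2^M)\cong\Omega^*(M)$, and composing with $\varphi$ from \cref{prop:FY-presentation} gives the stated map onto $\Omega^*(X_M)$. On generators it sends $x_G\mapsto x_G$ for $G\neq E$ and $x_E\mapsto\sum^{F_U}_{i_0\in H}(-_{F_U}x_H)$, as claimed.
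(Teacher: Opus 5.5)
Your argument is correct and is essentially the paper's proof: the paper only remarks that the corollary follows from \cref{prop:FY-presentation} together with formal group law manipulations, and you carry these out by eliminating $x_E$ via the $i_0$-relation in $\mcal{I}_2^M$ and then checking that the $I_2^M$-generators with one index fixed at $i_0$ already generate all of $I_2^M$. Two small points of precision: first, the unit-multiple claim is cleanest via \cref{lem:formal_dif} applied to $g_1=v$ and $g_2=u-_{F_U}v$ with $I=\{1\}$, which gives $x_E-\psi(x_E)=(x_E+_{F_U}s_{i_0})h$ with $h$ a unit; second, when showing that $p-_{F_U}q$ lies in the ideal, factor $p$ out of the infinite tail $\sum a_{k,l}p^k(-_{F_U}q)^l$ rather than arguing term by term.
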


\begin{rmk}\label{rmk:chow}
    The Feichtner--Yuzvinsky presentation for $\CH^*(X_M)$ from \cite{FY04} can be recovered from \cref{cor:second-FY-presentation} by \cref{theo:universal-CH}, and similarly the Feichtner--Yuzvinsky presentation for $K^0(X_M)$ from \cite{LLPP24} can be recovered from \cref{cor:second-FY-presentation} by \cref{theo:universal-K}. Heuristically, as in \cref{rmk:compare-toric}, in the presentation $\frac{\mcal{T}_M}{\mcal{I}_1^M+\mcal{I}_2^M}$, replace the pair $(\mbb{L}^*,F_U)$ with $(\mbb{Z},F_A)$ to obtain the Feichtner--Yuzvinsky presentation for $\CH^*(X_M)$, and replace $(\mbb{L}^*,F_U)$ with $(\mbb{Z},F_M)$ and set $\beta=1$ to obtain the Feichtner--Yuzvinsky presentation for $K^0(X_M)$.
\end{rmk}

\subsection{The simplicial presentation and its consequences}\label{subsection:simplicial}
    The goal of this subsection is to prove \cref{thrm:simplicial}, which is a \textit{simplicial presentation} for $\Omega^*(X_M)$ analogous to the simplicial presentations for $\CH^*(X_M)$ and $K^0(X_M)$. We also present \cref{cor:free} and \cref{cor:exceptional-explanation}, which explain some consequences of \cref{thrm:simplicial}. 

    We recall a basic fact about formal power series rings which we will use repeatedly in this section.
    \begin{lem}\label{lem:invertible}
        Let $R$ be any commutative ring, and set 
        \[\mcal{S}= R\llbracket x_1,\ldots,x_n\rrbracket. \]
        If $g\in \mcal{S}$ and the constant term of $g$ is a unit in $R$, then $g$ is a unit in $\mcal{S}$.
    \end{lem}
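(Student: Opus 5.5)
The plan is to construct an explicit inverse $g^{-1}$ and check that it lies in $\mcal{S}$. Write $g = u + g_+$, where $u\in R^\times$ is the constant term and $g_+\in\mathfrak{m}=\langle x_1,\ldots,x_n\rangle$ has zero constant term. Factoring gives $g = u(1 + u^{-1}g_+)$. Since a product of units is a unit, it suffices to invert $1+h$ where $h:=u^{-1}g_+\in\mathfrak{m}$.

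For $1+h$ we use the geometric series
\[
(1+h)^{-1}=\sum_{k\geq 0}(-1)^k h^k.
\]
The key point is that this infinite sum is a well-defined element of $\mcal{S}$. Because $h\in\mathfrak{m}$, every monomial appearing in $h^k$ has total degree at least $k$. Hence, for a fixed monomial $x^\alpha$ with $|\alpha|=d$, only the terms with $k\leq d$ contribute to its coefficient. The coefficient is therefore a finite sum in $R$, and the series converges in the $\mathfrak{m}$-adic topology.

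It remains to verify that this series really is an inverse. Compare coefficients degree by degree: the telescoping identity
\[
(1+h)\sum_{k=0}^{N}(-1)^k h^k = 1 + (-1)^N h^{N+1}
\]
holds, and $h^{N+1}\in\mathfrak{m}^{N+1}$. So for every $d$, the product $(1+h)\sum_{k}(-1)^kh^k$ agrees with $1$ in all degrees $\leq d$ (take $N\geq d$), and thus equals $1$.

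Finally, $g^{-1}=u^{-1}\sum_k(-1)^k h^k$. The only potential obstacle is justifying the convergence of the infinite sum, and the degree bound above handles it. Alternatively, one can solve $g\cdot b=1$ for the coefficients $b_\alpha$ recursively in $|\alpha|$: the equations are $u\,b_0=1$ and $u\,b_\alpha=-\sum_{0\neq\beta\leq\alpha} g_\beta b_{\alpha-\beta}$, which are solvable because $u$ is a unit.
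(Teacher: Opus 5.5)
Your argument is correct and complete. Factoring out the unit constant term and inverting $1+h$ by the $\mathfrak{m}$-adically convergent geometric series is the standard proof, and solving for the coefficients recursively in $|\alpha|$ works equally well. The paper gives no proof of this lemma; it simply recalls it as a basic fact about formal power series rings, so there is no argument of the paper's to compare yours against.
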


    Define
    \[S_M:= \mbb{Z}[h_G: \text{ $G$ a non-empty flat of $M$}]\quad \text{and}\quad \mcal{S}_M:= \mbb{L}^*\llbracket h_G: \text{ $G$ a non-empty flat of $M$}\rrbracket,\]
    and let
    \begin{gather*}
            J_1^M:=\langle (h_G-h_{G\vee H})(h_H-h_{G\vee H}): G,H \text{ non-empty flats of $M$}\rangle, \quad  J_2^M:=\langle h_G: \mrm{rank}_M(G)=1\rangle \subseteq S_M\\
      \mcal{J}_1^M:=\langle (h_G-h_{G\vee H})(h_H-h_{G\vee H}): G,H \text{ non-empty flats of $M$}\rangle,  \quad \mcal{J}_2^M:=\langle h_G: \mrm{rank}_M(G)=1\rangle \subseteq\mcal{S}_M
    \end{gather*}
    be ideals of $S_M$ and $\mcal{S}_M$, respectively.

    The ring presentation $\frac{S_M}{J_1^M+J_2^M}$ is equal to the \textbf{simplicial presentation} of the Chow ring of $X_M$, constructed in \cite{BES24} and explicitly described in \cite{LLPP24}. The elements in $\CH^*(X_M)$ that correspond to the generators $h_G$ are called \textbf{simplicial generators} of $\CH^*(X_M)$. In \cite{LLPP24}, the authors showed that $K^0(X_M)$ can be presented using the \textit{same} simplicial presentation. The elements in $K^0(X_M)$ corresponding to the $h_G$ are called the \textbf{simplicial generators} of $K^0(X_M)$. The ring isomorphism that sends a simplicial generator $h_G$ in $K^0(X_M)$ to the corresponding simplicial generator $h_G$ in $\CH^*(X_M)$ is referred to as the \textit{exceptional isomorphism} between $K^0(X_M)$ and $ \CH^*(X_M)$ in \cite{LLPP24}.
    
    \begin{lem}\label{lem:Ltensor}
        There is an $\mbb{L}^*$-algebra isomorphism
         \[\mbb{L}^*\otimes_{\mbb{Z}}\CH^*(X_M) \simeq \mbb{L}^* \otimes_{\mbb{Z}} \frac{S_M}{J_1^M+J_2^M} \to \frac{\mcal{S}_M}{\mcal{J}_1^M+\mcal{J}_2^M}, \quad \quad h_G \mapsto h_G. \]
    \end{lem}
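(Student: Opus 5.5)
The first isomorphism $\mbb{L}^*\otimes_{\mbb{Z}}\CH^*(X_M)\simeq \mbb{L}^*\otimes_{\mbb{Z}} S_M/(J_1^M+J_2^M)$ is simply the simplicial presentation of $\CH^*(X_M)$ from \cite{BES24, LLPP24}, tensored with $\mbb{L}^*$. The content is therefore the second map. Since tensoring is right exact, $\mbb{L}^*\otimes_{\mbb{Z}} S_M/(J_1^M+J_2^M)\simeq \mbb{L}^*[h_G]/(J_1^M+J_2^M)\mbb{L}^*[h_G]$. So I must compare a polynomial quotient with the corresponding power series quotient $\mcal{S}_M/(\mcal{J}_1^M+\mcal{J}_2^M)$.

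The natural inclusion $\mbb{L}^*[h_G]\hookrightarrow \mcal{S}_M$ sends the generators of $J_i^M$ to the generators of $\mcal{J}_i^M$. It therefore induces an $\mbb{L}^*$-algebra map
\[\psi\colon \mbb{L}^*[h_G]/(J_1^M+J_2^M)\to \mcal{S}_M/(\mcal{J}_1^M+\mcal{J}_2^M),\quad h_G\mapsto h_G.\]
The plan is to show both sides are obtained from $\mbb{L}^*[h_G]$, respectively $\mcal{S}_M$, by killing all monomials of degree at least $r$. The key input is a nilpotence statement, the analogue of \cref{lem:nilpotents-contained-in-smaller-ideal}: the ideal $\langle h_G : G\neq\emptyset\rangle^{r}$ is contained in $J_1^M+J_2^M$ inside $\mbb{Z}[h_G]$. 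For this I would use that $\CH^*(X_M)=S_M/(J_1^M+J_2^M)$ is graded, with $h_G$ in degree $1$, and vanishes in degrees $\geq r$, since the Bergman fan has dimension $r-1$ (this also follows from \cref{lem:nilpotents-contained-in-smaller-ideal} via \cref{rmk:chow}). Because $J_1^M+J_2^M$ is a homogeneous ideal, every monomial of degree $\geq r$ lies in $J_1^M+J_2^M$. The same containment then holds after base change to $\mbb{L}^*$ and inside $\mcal{S}_M$.

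Given this, let $\mathfrak{m}=\langle h_G\rangle\subseteq\mcal{S}_M$. Every power series $f\in\mcal{S}_M$ decomposes as $f=p+q$, with $p$ a polynomial of $h$-degree $<r$ and $q\in\mathfrak{m}^r$. This uses that finitely many variables are involved, so $\mathfrak{m}^r$ consists exactly of the series whose terms all have degree $\geq r$ (since $\mathfrak{m}^r$ is finitely generated by monomials). Since $\mathfrak{m}^r\subseteq \mcal{J}_1^M+\mcal{J}_2^M$, the map $\psi$ is surjective.

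For injectivity, note that $\mbb{L}^*[h_G]/\langle h\rangle^r\simeq \mcal{S}_M/\mathfrak{m}^r$, both being truncated polynomial rings. Both sides of $\psi$ are quotients of this common ring by the image of the same generators of $J_1^M+J_2^M$. The ideal generated by these elements in $\mcal{S}_M/\mathfrak{m}^r$ corresponds to the one in $\mbb{L}^*[h_G]/\langle h\rangle^r$, because multiplying by a power series is the same, modulo $\mathfrak{m}^r$, as multiplying by its truncation. Hence $\psi$ is an isomorphism.

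I expect the only real subtlety to be the nilpotence step: it must be justified in the polynomial simplicial presentation over $\mbb{Z}$, not just in the Feichtner--Yuzvinsky variables. Citing the known isomorphism with $\CH^*(X_M)$ and degree vanishing handles it. Failing that, I would run an induction on chains analogous to \cref{lem:nilpotents-contained-in-smaller-ideal}, using the relations $(h_G-h_{G\vee H})(h_H-h_{G\vee H})$.
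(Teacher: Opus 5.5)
Your proposal is correct and follows the paper's proof. The paper also reduces the lemma to nilpotence of the $h_G$ in $\mcal{S}_M/(\mcal{J}_1^M+\mcal{J}_2^M)$, justified by the simplicial presentation $\CH^*(X_M)\simeq S_M/(J_1^M+J_2^M)$ together with $\CH^k(X_M)=0$ for $k\geq r$. You additionally spell out the homogeneity step and the polynomial-versus-power-series comparison (surjectivity by truncation, injectivity via $\mbb{L}^*[h_G]/\langle h\rangle^r\simeq \mcal{S}_M/\mathfrak{m}^r$), which the paper leaves implicit.
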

    \begin{proof}
        The claim follows formally as soon we verify that the generators $h_G$ are nilpotent in $\frac{\mcal{S}_M}{\mcal{J}_1^M+\mcal{J}_2^M}$. Indeed, the elements 
        $h_G^r$ for non-empty flats $G$ are all contained in $\mcal{J}_1^M+\mcal{J}_2^M$. This can be shown directly from the definition of $\mcal{J}_1^M$ and $\mcal{J}_2^M$. Alternatively, this also follows from the fact that $\CH^*(X_M) \simeq \frac{S_M}{J_1^M+J_2^M}$ and  $\CH^k(X_M)=0$ for $k\geq r$; see \cite{LarsonStraightening}.
    \end{proof}
    We now prove a simplicial presentation for $\Omega^*(X_M)$.
    \begin{theo}\label{thrm:simplicial}
        There are $\mbb{L}^*$-algebra isomorphisms
        \[\mbb{L}^*\otimes_{\mbb{Z}} \CH^*(X_M) \simeq \frac{\mcal{S}_M}{\mcal{J}_1^M+\mcal{J}_2^M} \xrightarrow{\Phi}  \frac{\mcal{T}_M}{\mcal{I}_1^M+\mcal{I}_2^M}\simeq \Omega^*(X_M)\]
        where $\Phi$ is defined by declaring $\Phi(h_G)= \left.\sum_{H \supseteq G}\right.^{F_U} (-_{F_U}x_H)$.
    \end{theo}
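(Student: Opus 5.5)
The outer isomorphisms are already available: the left one is \cref{lem:Ltensor}, and the right one is \cref{cor:second-FY-presentation} combined with \cref{prop:FY-presentation}. So the work is to show that $\Phi$ is a well-defined isomorphism $\mcal{S}_M/(\mcal{J}_1^M+\mcal{J}_2^M)\to \mcal{T}_M/(\mcal{I}_1^M+\mcal{I}_2^M)$. I would construct $\Phi$ as a continuous $\mbb{L}^*$-algebra map $\mcal{S}_M\to\mcal{T}_M$. It is well defined on power series because each $\Phi(h_G)$ has zero constant term.

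The key structural observation is a triangularity. Modulo degree $\geq 2$ we have $\Phi(h_G)\equiv -\sum_{H\supseteq G}x_H$, so $\Phi$ is upper triangular with respect to the flat lattice. By formal Möbius inversion I would define an inverse $\Psi\colon\mcal{T}_M\to\mcal{S}_M$ in the following way.

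First I will show the key identity
\[
x_G = -_{F_U}\Bigl(\Phi(h_G) -_{F_U} \textstyle\sum^{F_U}_{H\supsetneq G}\ldots\Bigr).
\]
Concretely, $-_{F_U}x_G$ equals the formal difference of $\Phi(h_G)$ and the formal sum over $H\supsetneq G$ of $(-_{F_U}x_H)$. I can express that formal sum through $\Phi(h_{H})$ for the minimal $H\supsetneq G$ by downward induction on $G$.

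The cleanest route is through the relations. Modulo $\mcal{I}_1^M$, any monomial in the $x_H$ survives only on a chain. For two flats $H,H'\supseteq G$ lying in a chain, the formal sum over $H\supseteq G$ restricted to a chain $\mathcal{C}$ depends only on the members of $\mathcal{C}$ containing $G$. From this I would prove the key identity
\[
(\Phi(h_G)-\Phi(h_{G\vee H}))(\Phi(h_H)-\Phi(h_{G\vee H}))\in\mcal{I}_1^M .
\]
By \cref{lem:formal_dif}, $\Phi(h_G)-\Phi(h_{G\vee H})=\sum_{K\supseteq G,\,K\not\supseteq H} x_K u_K$ with units $u_K$, and similarly for the other factor with $K'\supseteq H$, $K'\not\supseteq G$. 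Each product $x_Kx_{K'}$ has $K,K'$ incomparable: if $K\subseteq K'$ then $K'\supseteq G\cup H$, a contradiction, and symmetrically. So the product lies in $\mcal{I}_1^M$ and $\Phi(\mcal{J}_1^M)\subseteq\mcal{I}_1^M$.

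For an atom $G=\{j_0\}$-closure, $\Phi(h_G)$ is $-_{F_U}$ applied to $\sum^{F_U}_{H\ni j_0}x_H$, which lies in $\mcal{I}_2^M$. Hence $\Phi$ descends to the quotients.

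For bijectivity, I would note that both quotients are complete with nilpotent generators. This uses \cref{lem:nilpotents-contained-in-smaller-ideal} and the proof of \cref{lem:Ltensor}, and also that $x_E$ is nilpotent via the formula in \cref{cor:second-FY-presentation}. Surjectivity then follows by Möbius inversion on the associated graded, or directly. Setting $y_G:=-_{F_U}x_G$, we have $\Phi(h_G)=y_G+_{F_U}\sum^{F_U}_{H\supsetneq G}y_H$, and downward induction on $G$ shows that each $x_G$ lies in the image, using \cref{lem:invertible} to invert units.

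Injectivity is the main obstacle. I would first try to build the inverse map $\Psi$ explicitly, by $x_G\mapsto -_{F_U}(h_G -_{F_U} h_{G'})$ in an appropriate sense, and check that it kills $\mcal{I}_1^M+\mcal{I}_2^M$; this seems messy. Instead I would use a filtration argument. Filter both sides by the $\langle\text{generators}\rangle$-adic filtration, under which $\Phi$ preserves degree, since $\mbb{L}^*$ and the variables are graded. The associated graded map is then the map obtained by reducing $F_U$ to the additive law in leading terms. Over $\mbb{L}^*$ this is $\mbb{L}^*\otimes$ the Chow-level map $h_G\mapsto -\sum_{H\supseteq G}x_H$, which is the known isomorphism between the simplicial and Feichtner--Yuzvinsky presentations of $\CH^*(X_M)$ from \cite{BES24,LLPP24}.

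The delicate point is showing that the graded pieces of the target are exactly $\mbb{L}^*\otimes\CH^*$, that is, that no extra relations appear. This should follow from \cref{thrm:KU} together with freeness of $\Omega^*(X_M)$ over $\mbb{L}^*$, or from a rank comparison. Since $\CH^*(X_M)$ is free over $\mbb{Z}$, a surjection between the two finite free graded $\mbb{L}^*$-modules of equal rank in each degree is then an isomorphism.
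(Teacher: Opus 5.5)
Your well-definedness step matches the paper's argument. The factorization of $\Phi(h_G)-\Phi(h_{G\vee H})$ via \cref{lem:formal_dif} over $K\supseteq G$, $K\not\supseteq H$, the incomparability check, and the atom computation together give $\Phi(\mcal{J}_i^M)\subseteq\mcal{I}_i^M$. Your surjectivity argument is also fine.

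The gap is injectivity. Passing to the adic associated graded only rephrases the problem. The associated graded of the target is $\mbb{L}^*[x_G]$ modulo the ideal of \emph{all} initial forms of elements of $\mcal{I}_1^M+\mcal{I}_2^M$. Showing that this ideal is no larger than $\mbb{L}^*$ tensored with the Chow Feichtner--Yuzvinsky ideal is equivalent to the injectivity you want.

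The inputs you propose for this are not available here:
\begin{itemize}
\item \cref{thrm:KU} gives only a presentation, not freeness.
\item Freeness of $\Omega^*(X_M)$ over $\mbb{L}^*$ is \cref{cor:free}, which the paper deduces \emph{from} this theorem, so invoking it is circular.
\item A ``rank comparison'' needs an independent computation of the rank of $\Omega^*(X_M)$, which you do not supply. One possible source is an external rational comparison of $\Omega^*$ with $\CH^*\otimes\mbb{L}^*$, combined with torsion-freeness of $\mbb{L}^*\otimes\CH^*(X_M)$, but that is additional input.
\item Your final sentence also presupposes that the target is free.
\end{itemize}

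The missing idea is that no such input is needed. The map $\Phi\colon\mcal{S}_M\to\mcal{T}_M$ is already an isomorphism of power series rings; the paper writes its inverse $x_H\mapsto\sum^{F_U}_{H\subseteq G}\mu(H,G)\cdot_{F_U}(-_{F_U}h_G)$ by M\"obius inversion. So injectivity on the quotients is exactly the reverse inclusion $\mcal{I}_1^M+\mcal{I}_2^M\subseteq\Phi(\mcal{J}_1^M+\mcal{J}_2^M)$, and your own factorization proves it.
\begin{itemize}
\item For $\mcal{I}_2^M$: if $G$ is the atom containing $i_0$, then $\Phi(h_G)$ is a unit multiple of the $i_0$-th generator.
\item For $\mcal{I}_1^M$: expand $\Phi\bigl((h_G-h_{G\vee H})(h_H-h_{G\vee H})\bigr)=\bigl(\sum_K x_Ku_K\bigr)\bigl(\sum_{K'}x_{K'}u_{K'}\bigr)$, where the $u$'s are units. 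This equals $u_Gu_Hx_Gx_H$ plus multiples of $x_Kx_{K'}$ with $K\supseteq G$, $K'\supseteq H$ and $(K,K')\neq(G,H)$. Each such pair is incomparable by your check, and $\mathrm{cork}(K)+\mathrm{cork}(K')<\mathrm{cork}(G)+\mathrm{cork}(H)$. Induction on the corank sum, together with invertibility of $u_Gu_H$ (\cref{lem:invertible}), gives $x_Gx_H\in\Phi(\mcal{J}_1^M)$.
\end{itemize}
This triangular argument is the paper's proof, and it should replace your filtration step.
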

\begin{proof}
By \Cref{cor:second-FY-presentation} and \Cref{lem:Ltensor}, all we need to show is that $\Phi$ is an isomorphism. Let $\mu$ be the 
M\"{o}bius function of the lattice of flats $M$. Consider the two $\mbb{L}^*$-algebra maps:
\[\phi: \mcal{S}_M \to T_M, h_G\mapsto \left.\sum_{G \subseteq H}\right.^{F_U} (-_{F_U}x_H) \quad \text{and} \quad \psi: T_M \to \mcal{S}_M, x_H\mapsto \left.\sum_{H\subseteq G}\right.^{F_U} \mu(H,G)\cdot_{F_U} (-_{F_U} h_G).\]
One can verify using M\"{o}bius inversion that $\phi$ and $\psi$ are inverse to each other. That is,
for a non-empty flat $H$ of $M$, we have 
\begin{align*}
\phi(\psi(x_H))&=
\phi\left( \left.\sum_{H\subseteq G}\right.^{F_U} \mu(H,G)\cdot_{F_U} (-_{F_U} h_G) \right) \\&=   \left.\sum_{H\subseteq G}\right.^{F_U} \left(\mu(H,G) \cdot_{F_U}  \left(\left.\sum_{G\subseteq K}\right.^{F_U} x_K\right)\right) \\&= \left.\sum_{H\subseteq K}\right.^{F_U}\left( \sum_{H\subseteq G\subseteq K} \mu(H,G) \right) \cdot_{F_U} x_K \\&= x_H 
\end{align*}
and
\begin{align*}
\psi(\phi(h_G))&=\psi\left( \left.\sum_{G\subseteq H}\right.^{F_U} (-_{F_U} x_H) \right) \\&=   \left.\sum_{G\subseteq H}\right.^{F_U} \left(-_{F_U}\left(  \left.\sum_{H\subseteq K}\right.^{F_U} \mu(H,K)\cdot_{F_U} (-_{F_U}h_K)\right)\right) \\&= \left.\sum_{G\subseteq H}\right.^{F_U}\left( \left.\sum_{H\subseteq  K}\right.^{F_U} \mu(H,K)  \cdot_{F_U} h_K\right) \\&=
\left.\sum_{G\subseteq K}\right.^{F_U}\left(\sum_{G\subseteq H\subseteq K}\mu(H,K)\right)\cdot_{F_U} h_K
\\&= h_G.
\end{align*}
Therefore, $\phi$ is in fact an isomorphism of $\mbb{L}^*$-algebras, with inverse $\psi$. 
To show that $\Phi$ is an isomorphism, it is enough to show that ${\phi}(\mcal{J}_1^M)=\mcal{I}_1^M$ and ${\phi}(\mcal{J}_2^M)=\mcal{I}_2^M$.

First, we show that ${\phi}(\mcal{J}_2^M)=\mcal{I}_2^M$. Let $G$ be any rank $1$ flat of $M$, and choose $i_0\in G$. We have that
\[{\phi}(h_G)= \left.\sum_{H\supseteq G}\right.^{F_U} (-_{F_U}x_H)  = \left.\sum_{i_0\in H}\right.^{F_U} (-_{F_U}x_H) = -_{F_U}\left(\left.\sum_{i_0\in H}\right.^{F_U}x_H\right)\in \mcal{I}_2^M\]
and thus ${\phi}(\mcal{J}_2^M)\subseteq\mcal{I}_2^M$. To see that the images of the ${\phi}(h_G)$ generate $\mcal{I}_2^M$, first take any $i_0\in E$ and choose the flat $G$ of rank $1$ containing $i_0$. \cref{lem:formal_inv} implies
\[{\phi}(h_G)=-_{F_U}\left(\left.\sum_{ i_0\in H}\right.^{F_U} x_H\right)=g_{i_0}\cdot \left(\left.\sum_{ i_0\in H}\right.^{F_U} x_H\right),\]
where $g_{i_0}\in \mcal{S}_M$ is an element with constant term $-1$. Now, \cref{lem:invertible} says the $g_{i_0}$ is a unit in $\mcal{S}_M$, so the images of the ${\phi}(h_G)$, with $\mrm{rank}_M(G)=1$, generate the ideal $\mcal{I}_2^M$. This tells us that ${\phi}(\mcal{J}_2^M)=\mcal{I}_2^M$.

Next, we show that ${\phi}(\mcal{J}_1^M)=\mcal{I}_1^M$. Observe that $(h_G-h_{G\vee H})(h_H-h_{G\vee H})$ is non-zero only when $H$ and $G$ are incomparable. Therefore, ${\phi}(\mcal{J}_1^M)$ is generated by the elements
\[({\phi}(h_G)-{\phi}(h_{G\vee H}))({\phi}(h_H)-{\phi}(h_{G\vee H})), \]
where $G$ and $H$ are incomparable flats of $M$. For $G$ and $H$ incomparable, the set of flats containing $G\vee H$ is a strict subset of both the set of flats containing $G$ and the set of flats containing $H$. Therefore, we can use \cref{lem:formal_inv} and \cref{lem:formal_dif} to write 
$({\phi}(h_G)-{\phi}(h_{G\vee H}))({\phi}(h_H)-{\phi}(h_{G\vee H})) $ 
as
\begin{equation}\label{eq:incomp} z_{G,H} \coloneqq \left( \sum_{G\subseteq G' \subsetneq G\vee H} x_{G'} g_{G'}\right)\left( \sum_{H\subseteq H' \subsetneq G\vee H} x_{H'} g_{H'}\right)\end{equation}
where each $g_{G'}$ and $g_{H'}$ is a formal power series over $\mbb{L}^*$ in the $x_G$ variables whose constant term is $-1$. Any flats $G',H'$ with $G\subseteq G' \subsetneq G\vee H$ and $H\subseteq H' \subsetneq G\vee H$ are forced to be incomparable since $G\vee H$ is the smallest flat containing both $G$ and $H$. Thus each $z_{G,H}$ is contained in $\mcal{I}_1^M$, giving us ${\phi}(\mcal{J}_1^M)\subseteq\mcal{I}_1^M$. 

To show $\mcal{I}_1^M\subseteq {\phi}(\mcal{J}_1^M)$, we will show that the $z_{G,H}$ generate $\mcal{I}_1^M$. As in the proof of \cite[Lemma $A.1$]{LLPP24}, we will prove, by induction on the sum of the coranks of incomparable flats $G$ and $H$, that $x_Gx_H$ is contained in the ideal ${\phi}(\mcal{J}_1^M)$. For the base cases, assume that $G$ and $H$ are incomparable flats whose coranks are each equal to one. In this case, $z_{G,H}= x_{G}x_{H}g_{G}g_{H}$. The polynomials $g_G$ and $g_H$ in $\mcal{T}_M$ have constant term $-1$, so by \Cref{lem:invertible}, they are units in $\mcal{T}_M$. Therefore $x_Gx_H = g_G^{-1}g_H^{-1}z_{G,H}\in {\phi}(\mcal{J}_1^M)$.

For general $G$ and $H$, each term in $z_{G,H}$ besides $x_Gx_Hg_Gg_H$ is divisible by a monomial $x_{G'}x_{H'}$ where $G'$ and $H'$ are incomparable flats whose sum of coranks is larger than the sum of the coranks of $G$ and $H$. Thus, our inductive hypothesis implies that $x_Gx_Hg_Gg_H\in {\phi}(\mcal{J}_1^M)$, and as before we conclude that $x_{G}x_H\in {\phi}(\mcal{J}_1^M)$.
\end{proof}

\begin{cor}\label{cor:free}
    The algebraic cobordism ring $\Omega^*(X_M)$ is a free $\mbb{L}^*$-module of finite rank.
\end{cor}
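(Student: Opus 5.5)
By \cref{thrm:simplicial}, we have an $\mbb{L}^*$-algebra isomorphism $\Omega^*(X_M)\simeq \mbb{L}^*\otimes_{\mbb{Z}}\CH^*(X_M)$. Since $\mbb{L}^*\otimes_{\mbb{Z}}(-)$ sends free $\mbb{Z}$-modules of rank $N$ to free $\mbb{L}^*$-modules of rank $N$, it is enough to show that $\CH^*(X_M)$ is a free abelian group of finite rank.

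I would get this from the Feichtner--Yuzvinsky Gröbner basis for $\CH^*(X_M)\simeq \frac{S_M}{J_1^M+J_2^M}$ in \cite{FY04}. That result gives an explicit $\mbb{Z}$-basis of monomials $x_{G_1}^{a_1}\cdots x_{G_k}^{a_k}$, indexed by chains $\emptyset=G_0\subsetneq G_1\subsetneq\dotsb\subsetneq G_k$ of non-empty flats, with $1\le a_i<\mrm{rk}(G_i)-\mrm{rk}(G_{i-1})$. The Gröbner basis has leading coefficients $\pm1$, and this is exactly what makes these standard monomials a $\mbb{Z}$-basis rather than only a $\mbb{Q}$-basis. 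Since there are finitely many flats, there are only finitely many such monomials, so the rank is finite.

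If one prefers to avoid citing integrality of the Gröbner basis, there is a second route. $\CH^*(X_M)$ is finitely generated because it is generated by finitely many nilpotent classes, as \cref{lem:Ltensor} notes. Freeness of $\CH^*(X_M)$ can then be obtained from the simplicial presentation via \cite{BES24} or \cite{LarsonStraightening}, which give an explicit $\mbb{Z}$-basis of $\CH^*(X_M)$ by ``nested'' monomials in the $h_G$.

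The only step that is not formal is this integral freeness of $\CH^*(X_M)$; everything else follows from \cref{thrm:simplicial}. I would cite \cite{FY04} directly, since their basis theorem is stated over $\mbb{Z}$.
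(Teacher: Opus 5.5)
Your proposal is correct and follows the paper's proof: combine the isomorphism $\Omega^*(X_M)\simeq \mbb{L}^*\otimes_{\mbb{Z}}\CH^*(X_M)$ from \cref{thrm:simplicial} with the fact that $\CH^*(X_M)$ is a free $\mbb{Z}$-module of finite rank \cite[Corollary 1]{FY04}. One small imprecision is that the Feichtner--Yuzvinsky Gr\"obner basis and monomial basis live in the presentation with generators $x_G$ (the Chow analogue of $\mcal{T}_M/(\mcal{I}_1^M+\mcal{I}_2^M)$), not in $S_M/(J_1^M+J_2^M)$; this is harmless, because freeness is a property of the underlying abelian group and does not depend on the presentation.
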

\begin{proof}
    As $\Omega^*(X_M)\simeq \mbb{L}^*\otimes_{\mbb{Z}}\CH^*(X_M)$, this follows from the fact that $\CH^*(X_M)$ is a free $\mbb{Z}$-module of finite rank (\cite[Corollary 1]{FY04}).
\end{proof}

We now note some formal consequences of the existence of an $\mbb{L}^*$-algebra isomorphism $\mbb{L}^*\otimes_{\mbb{Z}} \CH^*(X) \simeq \Omega^*(X)$ for a smooth variety $X$.
Recall \cref{dfn:free-theory}, which says that a free oriented cohomology theory $\mrm{h}_{(R,F)}^*$ gives rise to a formal group law $(R,F)$ such that $\mrm{h}_{(R,F)}^*(X)=\Omega^*(X)\otimes_\mbb{L^*} R$.  By definition, $\mrm{h}_{(R,F)}^*(\mrm{pt})=R$.

\begin{prop}\label{prop:exceptional-explanation}
    Let $\mrm{h}_{(R,F)}^*$ be a free oriented cohomology theory with formal group law $(R,F)$, and let $X$ be a smooth variety with an $\mbb{L}^*$-algebra isomorphism $\Theta_{(\mbb{L}^*,F_U)}\colon \Omega^*(X)\xrightarrow{\simeq} \mbb{L}^* \otimes_{\mbb{Z}} \CH^*(X)$. Then there is an induced $R$-algebra isomorphism
    $\Theta_{(R,F)}\colon \mrm{h}^*_{(R,F)}(X)\xrightarrow{\simeq } R\otimes_\mbb{Z} \CH^*(X)$ and, in particular, a ring isomorphism
    \[\Theta\colon K^0(X)\xrightarrow{\simeq} \CH^*(X). \]
\end{prop}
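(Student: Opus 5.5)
The plan is to obtain $\Theta_{(R,F)}$ by tensoring $\Theta_{(\mbb{L}^*,F_U)}$ over $\mbb{L}^*$ with $R$. Let $f\colon \mbb{L}^*\to R$ be the ring homomorphism of \cref{rmk:Lazard-ring} classifying $(R,F)$. By \cref{dfn:free-theory}, $\mrm{h}^*_{(R,F)}(X)=\Omega^*(X)\otimes_{\mbb{L}^*}R$. Since $\Theta_{(\mbb{L}^*,F_U)}$ is an isomorphism of $\mbb{L}^*$-algebras, base change along $f$ gives an isomorphism of $R$-algebras
\[\Theta_{(\mbb{L}^*,F_U)}\otimes_{\mbb{L}^*}\mrm{id}_R\colon \Omega^*(X)\otimes_{\mbb{L}^*}R\xrightarrow{\simeq}(\mbb{L}^*\otimes_{\mbb{Z}}\CH^*(X))\otimes_{\mbb{L}^*}R.\]
The right side is canonically isomorphic to $R\otimes_{\mbb{Z}}\CH^*(X)$ as an $R$-algebra, via associativity and commutativity of tensor products and $\mbb{L}^*\otimes_{\mbb{L}^*}R\simeq R$. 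Concretely, the map sends $(\ell\otimes c)\otimes r\mapsto f(\ell)r\otimes c$, and one checks that it is multiplicative. Composing these two isomorphisms defines $\Theta_{(R,F)}$. This step is purely formal.

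For the $K$-theory statement, I will use the formal group law $(\mbb{Z},x+y-xy)$ of \cref{rmk:k0fgl}. \cref{theo:universal-K} shows that graded $K$-theory is the free theory attached to $(\mbb{Z}[\beta,\beta^{-1}],F_M)$. Further base change along $\beta\mapsto 1$ turns the classifying map $\mbb{L}^*\to\mbb{Z}[\beta,\beta^{-1}]$ into the map $\mbb{L}^*\to\mbb{Z}$ classifying $x+y-xy$. By \cref{rmk:k0fgl}, this gives $\Omega^*(X)\otimes_{\mbb{L}^*}\mbb{Z}\simeq K^0(X)$. Applying the same base change argument with $R=\mbb{Z}$ along this map yields a ring isomorphism
\[K^0(X)\simeq \Omega^*(X)\otimes_{\mbb{L}^*}\mbb{Z}\xrightarrow{\simeq}\mbb{Z}\otimes_{\mbb{Z}}\CH^*(X)=\CH^*(X).\]
Equivalently, one can apply the first part with $R=\mbb{Z}[\beta,\beta^{-1}]$ and then quotient both sides by $\beta-1$; this is compatible because quotienting by $\beta-1$ is again a base change.

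The only point that needs care is the $K$-theory identification. Here $\mbb{Z}$ is being viewed as an $\mbb{L}^*$-algebra through the multiplicative formal group law, not the additive one. The isomorphism $\Omega^*(X)\otimes_{\mbb{L}^*}\mbb{Z}\simeq K^0(X)$ is exactly \cref{rmk:k0fgl}, and the tensor product $(\mbb{L}^*\otimes_{\mbb{Z}}\CH^*(X))\otimes_{\mbb{L}^*}\mbb{Z}$ collapses to $\CH^*(X)$ whatever homomorphism $\mbb{L}^*\to\mbb{Z}$ is used. Since $\Theta$ is not claimed to be graded on the $K^0$ side, no grading issue arises. The resulting map is only a ring isomorphism, not one compatible with Chern characters, which matches the statement.
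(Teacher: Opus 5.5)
Your proposal is correct and follows essentially the same route as the paper: base change $\Theta_{(\mbb{L}^*,F_U)}$ along the classifying map $\mbb{L}^*\to R$, then identify $(\mbb{L}^*\otimes_{\mbb{Z}}\CH^*(X))\otimes_{\mbb{L}^*}R\simeq R\otimes_{\mbb{Z}}\CH^*(X)$. For $K^0$, the paper applies this with $(\mbb{Z}[\beta,\beta^{-1}],F_M)$ and sets $\beta=1$ via \cref{theo:universal-K} and \cref{rmk:k0fgl}; your primary version, which base changes directly along the map $\mbb{L}^*\to\mbb{Z}$ classifying $x+y-xy$, is the same argument, as you yourself observe.
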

\begin{proof}   
    The $\mbb{L}^*$-algebra isomorphism $\Theta_{(\mbb{L}^*,F_U)}$ induces an $R$-algebra isomorphism
    \begin{align*} \Theta_{(\mbb{L}^*,F_U)}\otimes \mrm{id}_R\colon \Omega^*(X)\otimes_{\mbb{L}^*}R\xrightarrow{\simeq} &\left(\mbb{L}^*\otimes_{\mbb{Z}}\CH^*(X)\right)\otimes_{\mbb{L}^*} R,
    \end{align*}
    where the ring homomorphism $\mbb{L}^*\to R$ is induced by the formal group law $(R,F)$ (see \cref{rmk:Lazard-ring}).
    Noting the identifications of $R$-algebras 
    \[ \Theta_{(R,F)}'\colon \left(\mbb{L}^*\otimes_{\mbb{Z}}\CH^*(X)\right)\otimes_{\mbb{L}^*} R\xrightarrow{\simeq} \mrm{CH}^*(X)\otimes_{\mbb{Z}} (\mbb{L}^*\otimes_{\mbb{L}^*} R)\xrightarrow{\simeq}  R\otimes_{\mbb{Z}}\mrm{CH}^*(X),\] we define $\Theta_{(R,F)}$ to be $\Theta_{(R,F)}:=\Theta_{(R,F)}'\circ (\Theta_{(\mbb{L}^*,F_U)}\otimes \mrm{id}_R)$, which completes the proof of the first claim.

    It follows from the argument above that there is a $\mbb{Z}[\beta,\beta^{-1}]$-algebra isomorphism \[\Theta_{(\mbb{Z}[\beta,\beta^{-1}],F_M)}\colon \Omega^*(X)\otimes_{\mbb{L}^*}\mbb{Z}[\beta,\beta^{-1}]\xrightarrow{\simeq} \mbb{Z}[\beta,\beta^{-1}]\otimes_{\mbb{Z}}\mrm{CH}^*(X).\]
    The isomorphism $\Theta$ can be obtained from the one above by setting $\beta=1$, noting \cref{theo:universal-K} and Remark~\ref{rmk:k0fgl}. 
\end{proof}

By \Cref{prop:exceptional-explanation}, the following proposition holds if $X$ is a smooth variety such that  $\mbb{L}^*\otimes_{\mbb{Z}}\mrm{CH}^*(X)\simeq \Omega^*(X)$ and the Chow ring of $X$ has Poincar\'e duality.

\begin{prop}\label{prop:exceptionalHRR}
    Let $X$ be a smooth variety whose Chow ring vanishes in degree greater than $d$ and is equipped with an isomorphism $\mrm{deg}: \CH^{d}(X)\to \mbb{Z}$ such that for all $i$, the pairing
    \[\CH^i(X) \times \CH^{d-i}(X) \to \mbb{Z}, \quad (x,y) \mapsto \mrm{deg}(xy)\]
    is perfect. Suppose there is a ring isomorphism $\zeta: K^0(X)\to \CH^*(X)$ and let $\chi:K^0(X)\to \mbb{Z}$ be any group homomorphism. Then there is an element $f\in \CH^*(X)$ such that
    \[\chi(x) = \mrm{deg}(\zeta(x)f), \quad \text{for all } x\in K^0(X). \]
\end{prop}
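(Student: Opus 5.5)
The plan is to reduce the statement to linear algebra over $\mbb{Z}$, using Poincaré duality on $\CH^*(X)$.

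Since $\chi$ is a group homomorphism and $\zeta$ is a ring (hence additive) isomorphism, the composite $\lambda := \chi\circ\zeta^{-1}\colon \CH^*(X)\to\mbb{Z}$ is a group homomorphism. It therefore suffices to find $f\in\CH^*(X)$ with $\lambda(y)=\mrm{deg}(yf)$ for all $y\in\CH^*(X)$. Substituting $y=\zeta(x)$ then gives $\chi(x)=\mrm{deg}(\zeta(x)f)$ for all $x\in K^0(X)$.

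To construct $f$, decompose $\CH^*(X)=\bigoplus_{i=0}^{d}\CH^i(X)$, using the vanishing above degree $d$. Let $\lambda_i$ be the restriction of $\lambda$ to $\CH^i(X)$, and interpret $\mrm{deg}$ as the given isomorphism on $\CH^d(X)$ extended by zero on the other graded pieces. The pairing $\CH^{d-i}(X)\times\CH^{i}(X)\to\mbb{Z}$ is perfect, meaning $y\mapsto \mrm{deg}(y\,\cdot)$ gives an isomorphism $\CH^{d-i}(X)\xrightarrow{\sim}\mrm{Hom}_{\mbb{Z}}(\CH^i(X),\mbb{Z})$. Hence there is a unique $f_{d-i}\in\CH^{d-i}(X)$ with $\lambda_i(y)=\mrm{deg}(y f_{d-i})$ for $y\in\CH^i(X)$. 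Set $f=\sum_i f_{d-i}$.

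Now take a homogeneous $y\in\CH^i(X)$. The product $y f_{d-j}$ lies in degree $i+d-j$, which is $\neq d$ unless $j=i$, so its degree vanishes. Therefore $\mrm{deg}(yf)=\mrm{deg}(yf_{d-i})=\lambda(y)$, and the identity extends to all $y$ by additivity.

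The only potential obstacle is the meaning of "perfect". Over $\mbb{Z}$, perfectness must mean that the pairing induces an isomorphism onto the dual, not merely that it is nondegenerate; I take this as the intended hypothesis. No finiteness issues arise, since the graded pieces are only used through this duality isomorphism. Multiplicativity of $\zeta$ is not even needed; only additivity is used.
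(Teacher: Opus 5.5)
Your proposal is correct and follows the same route as the paper: both compose to get $\chi\circ\zeta^{-1}$, use the perfect pairing to represent its restriction to each $\CH^i(X)$ by an element of $\CH^{d-i}(X)$, and sum these elements to get $f$. Your version also spells out two points the paper leaves implicit: that $\mrm{deg}$ is extended by zero off $\CH^d(X)$, and that the cross-terms $y f_{d-j}$ with $j\neq i$ contribute nothing.
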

\begin{proof}
    Consider the group homomorphism $\chi \circ \zeta^{-1} \colon \mrm{CH}^i(X)\to \mbb{Z}$. As $\mrm{deg}$ induces a perfect pairing, there is an element $t_{d-i}\in \mrm{CH}^{d-i}(X)$ satisfying the equality $\chi\circ \zeta^{-1}(x) = \mrm{deg}(xt_{d-i})$. Taking $f$ to be the sum of all such $t_{d-i}$s yields the result.
\end{proof}

If $X$ is a smooth projective variety, the structure map $\pi:X\to \mrm{pt}$ induces a group homomorphism $\chi\coloneqq \pi_*: K^0(X) \to \mbb{Z}$ known as the Euler characteristic map. If, in addition, there is a ring isomorphism $\zeta: K^0(X)\to \CH^*(X)$, then \Cref{prop:exceptionalHRR} tells us that there is an element $f\in \CH^*(X)$ such that $\chi(y)=\mrm{deg}(\zeta(y)f)$. The relationship between $\zeta$ and $f$ is analogous to the relationship between the Chern character map and the Todd class in the Hirzebruch--Riemann--Roch formula. Despite this analogy, we note that $\zeta$ is not equal to the Chern character map and $f$ is not equal to the Todd class of $X$.

Combining \Cref{thrm:simplicial}, \Cref{prop:exceptional-explanation}, and \Cref{prop:exceptionalHRR}, we obtain the following corollary. This corollary appears, in a slightly stronger form, as Theorem 1.2 of \cite{LLPP24} and is known as the exceptional isomorphism for $W_{\mcal{H}}$.

\begin{cor}\label{cor:exceptional-explanation}
    There is an integral ring isomorphism $\zeta \colon K^0(X_M)\to \CH^*(X_M)$ defined by sending the simplicial generator $h_G\in K^0(X_M)$ to the corresponding simplicial generator $h_G\in \CH^*(X_M)$. If $M_\mcal{H}$ is the matroid of a complex hyperplane arrangement $\mcal{H}$, then there is an element $f\in \CH^*(W_{\mcal{H}})$ such that 
    \[\chi(x) = \deg(\tilde\zeta(x)f),\quad  \text{for all } x\in K^0(W_{\mcal{H}}) \]
    where $\chi$ is the Euler characteristic map and $\tilde \zeta$ is the composition of ring isomorphisms \[\tilde\zeta:K^0(W_{\mcal{H}}) \xrightarrow{(\iota^*)^{-1}} K^0(X_{M_\mcal{H}}) \xrightarrow{\zeta} \CH^*(X_{M_{\mcal{H}}}) \xrightarrow{\iota^*} \CH^*(W_{\mcal{H}}).\]
\end{cor}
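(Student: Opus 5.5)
The first claim will be derived from \Cref{thrm:simplicial} together with \Cref{prop:exceptional-explanation}, applied with $X=X_M$. \Cref{thrm:simplicial} provides an $\mbb{L}^*$-algebra isomorphism $\Theta_{(\mbb{L}^*,F_U)}\colon\Omega^*(X_M)\to\mbb{L}^*\otimes_{\mbb{Z}}\CH^*(X_M)$, which is the inverse of the composite there. Under this isomorphism, the class $\Phi(h_G)=\sum^{F_U}_{H\supseteq G}(-_{F_U}x_H)$ goes to $1\otimes h_G$. Tensoring with the ring $\mbb{Z}$ via the map $\mbb{L}^*\to\mbb{Z}$ classifying $(\mbb{Z},x+y-xy)$ gives two things: the left side becomes $K^0(X_M)$ by \Cref{rmk:k0fgl}, and the right side becomes $\CH^*(X_M)$. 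This yields a ring isomorphism $\zeta$ with $\zeta(h_G^K)=h_G$, where $h_G^K$ is the image of $\Phi(h_G)$, namely $\sum^{F_M}_{H\supseteq G}(-_{F_M}x_H)$ evaluated at $\beta=1$.

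The step that needs care is identifying $h_G^K$ with the simplicial generator of $K^0(X_M)$ from \cite{LLPP24}. I would check this by computing with $x_H=c_1(\mcal{O}(D_{\rho_H}))=1-[\mcal{O}(-D_{\rho_H})]$. With this, $-_{F_M}x_H=1-[\mcal{O}(D_{\rho_H})]$ and formal addition becomes multiplication of $1-(\cdot)$. Hence
\[h_G^K=1-\Big[\mcal{O}\Big(\textstyle\sum_{H\supseteq G}D_{\rho_H}\Big)\Big].\]
This should agree with the \cite{LLPP24} definition, possibly up to the harmless convention of dualizing, in which case one composes with the duality involution of $K^0$. The same computation with $(\mbb{Z},F_A)$ recovers the Chow simplicial generators $h_G=-\sum_{H\supseteq G}x_H$, consistent with \cite{BES24}.

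For the second claim, specialize to $M=M_{\mcal{H}}$. I would use that $\iota^*$ is an isomorphism on $\CH^*$ \cite[Corollary 2]{FY04} and on $K^0$ \cite{LLPP24}, so $\tilde\zeta$ is a ring isomorphism $K^0(W_{\mcal{H}})\to\CH^*(W_{\mcal{H}})$. Since $W_{\mcal{H}}$ is smooth and projective of dimension $d=r-1$, and its cycle class map is an isomorphism (Lemma \ref{lem:cycle-class}), $\CH^*(W_{\mcal{H}})$ satisfies integral Poincaré duality: it is isomorphic to the integral cohomology, which is torsion-free, and the top degree map gives $\deg\colon\CH^d\cong\mbb{Z}$. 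Then \Cref{prop:exceptionalHRR}, applied with $\zeta=\tilde\zeta$ and $\chi=\pi_*$, produces the element $f$. The main obstacle is the torsion-freeness needed for a perfect pairing over $\mbb{Z}$. I would obtain it from the freeness of $\CH^*(X_M)$ \cite[Corollary 1]{FY04}, transported along $\iota^*$, together with Poincaré duality in $H^*(W_{\mcal{H}}(\mbb{C});\mbb{Z})$.
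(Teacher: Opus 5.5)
Your proposal is correct and follows the paper's route: the paper obtains this corollary simply by combining \Cref{thrm:simplicial}, \Cref{prop:exceptional-explanation} and \Cref{prop:exceptionalHRR}, exactly as you do, using the isomorphisms $\iota^*$ on $K^0$ and $\CH^*$ to pass to $W_{\mcal{H}}$. You also make explicit two steps the paper leaves implicit. The first is the identification of the $K$-theoretic generators as $1-[\mathcal{O}(\sum_{H\supseteq G}D_{\rho_H})]$, where for $H=E$ you should read $D_{\rho_E}$ as the divisor $-\sum_{i_0\in H\neq E}D_{\rho_H}$ representing $x_E$. The second is the check of integral Poincar\'e duality for $\CH^*(W_{\mcal{H}})$ via \Cref{lem:cycle-class} and freeness.
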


We note to the reader that in Theorem 1.2 of \cite{LLPP24} it is proven that, in the situation of the above corollary, $f$ is equal to $1+h_E+h_E^2+\ldots \in \CH^*(W_\mcal{H})$. This explicit expression for $f$ is useful for calculations and does not follow abstractly from our \Cref{thrm:simplicial}.
  
The isomorphism $\Omega^*(X_M)\simeq \mbb{L}^*\otimes \CH^*(X_M)$ is quite special and does not hold for many varieties, even in the case of smooth, projective toric varieties. The following example, due to Speyer \cite{Speyer}, illustrates this phenomenon. Note that, for a smooth variety $X$, an isomorphism of $\mbb{L}^*$-algebras $\Omega^*(X)\simeq \mbb{L}^*\otimes \CH^*(X)$ would imply a ring isomorphism $K^0(X)\simeq \CH^*(X)$.

\begin{egg}\label{egg:smooth-projective-toric}[A smooth projective toric variety $X_{\Sigma}$ where $K^0(X_{\Sigma})\not\simeq \CH^*(X_{\Sigma})$]
Let $\Sigma \subseteq \mbb{R}^3$ be the simplicial fan whose rays have primitive vectors equal to the columns of the matrix
\[
\bordermatrix{ & a & b & d & A & B & D \cr
       & 1 & 0 & 0 & -1 & 0 & 0 \cr
       & 0 & 1 & 0  & -1 & -1 & 0\cr
       & 0 & 0 & 1 & 0 & -2 & -1 }
\]
and whose maximal cones correspond to the subsets $S\subseteq \{a,A, b, B, d,D\}$ such that $|S|=3$ and $S$ does not contain any of the sets $\{a,A\}$, $\{b,B\}$, $\{d,D\}$. The fan $\Sigma$ is projective, unimodular and combinatorially equivalent to the normal fan of the cube. The associated toric variety $X_{\Sigma}$ is smooth and projective. Let 
\[ R_{\Sigma} = \frac{\mbb{Z}[x_{a},x_{A},x_b,x_{B},x_d,x_{D}]}{\left\langle x_ax_{A},x_bx_{B},x_dx_{D}\right\rangle} \]
be the Stanley--Reisner ring of $\Sigma$. By \cite[Section 5.2]{fultonToric} and \cite[Theorem 1]{CCKR25}, we can compute ring presentations of $\CH^*(X_{\Sigma})$ and $K^0(X_{\Sigma})$ as quotients of $R_{\Sigma}$. Their respective ring presentations are
\begin{gather*}
    \CH^*(X_{\Sigma}) \simeq R_{\Sigma}/\left\langle x_a-x_{A}, x_b-x_{A}-x_{B}, x_d-2x_{B}-x_{D} \right\rangle\\
    K^0(X_{\Sigma}) \simeq R_{\Sigma}/\left\langle x_a-x_{A}, (1-x_b) - (1-x_A)(1-x_B), (1-x_d) - (1-x_B)^2(1-x_D) \right\rangle.
\end{gather*}
Let $\mbb{F}_2= \mbb{Z}/2\mbb{Z}$ be the field with two elements. We show that the $\mbb{F}_2$-algebras $\CH^*(X_{\Sigma})\otimes \mbb{F}_2$ and $K^0(X_{\Sigma}) \otimes \mbb{F}_2$ are not isomorphic (as they would be had we tensored with $\mbb{Q}$) and hence that $\CH^*(X_{\Sigma})\not\simeq K^0(X_{\Sigma})$. 
One can check, for instance using the computer program Macaulay2 \cite{M2} (see \cite{EP26} for our code used to verify this example), that the sets
\begin{equation}\label{eq:bases}
\{1,x_A,x_B,x_D,x_Ax_D,x_Bx_D,x_B^2,x_B^2x_D\}\quad\text{and}\quad \{1,x_A,x_b,x_B,x_d,x_D,x_Ax_D,x_Bx_D\} 
\end{equation}
are $\mbb{F}_2$-bases of $\CH^*(X_{\Sigma})\otimes \mbb{F}_2$ and $K^0(X_{\Sigma})\otimes \mbb{F}_2$, respectively. Over $\mbb{F}_2$, squaring is a linear map and thus the spaces of squares in $\CH^*(X_{\Sigma})\otimes \mbb{F}_2$ and $K^0(X_{\Sigma})\otimes \mbb{F}_2$ are spanned by the squares of the bases given in (\ref{eq:bases}). We calculate that 
\[\{1,x_B^2\} \quad\text{and}\quad \{1,x_b+x_A+x_B,x_b+x_d+x_A+x_B+x_D\} \]
are the non-zero squares of these bases. The dimensions of these spaces are different and hence $\CH^*(X_{\Sigma})\otimes \mbb{F}_2$ is not isomorphic to $K^0(X_{\Sigma})\otimes \mbb{F}_2$ as an $\mbb{F}_2$-algebra. Thus, $\mrm{CH}^*(X_\Sigma)$ and $K^0(X_\Sigma)$ are not isomorphic as rings.
\end{egg}

\section{Algebraic and complex cobordism rings of $W_{\mcal{H}}$}\label{cobordism-of-wonderful}
    
    Let $\mcal{H}$ be a hyperplane arrangement in a complex vector space $L$ whose hyperplanes are indexed by a finite set $E$, and let $W_\mcal{H}$ be the associated wonderful variety. We denote the matroid associated with $\mcal{H}$ by $M_{\mcal{H}}$ and the toric variety of the Bergman fan of this matroid by $X_{M_\mcal{H}}$. In \cref{subsection:liftings}, we recall some comparison results between algebraic cobordism and Chow theory, and between  complex cobordism and singular cohomology.
    In \cref{subsection:cobordismWH}, we apply these comparison results to $W_{\mcal{H}}$ to obtain $\mbb{L}^*$-module generators for its algebraic and complex cobordism rings.
    In \cref{subsection:ring-presentation}, we prove that the pullback of the inclusion $W_\mcal{H}\hookrightarrow X_{M_{\mcal{H}}}$ induces an $\mbb{L}^*$-algebra isomorphism $\Omega^*(X_{M_\mcal{H}})\simeq \Omega^*(W_\mcal{H}) $, and that the natural map $\vartheta\colon \Omega^*\to MU^*$ induces an $\mbb{L}^*$-algebra isomorphism $\Omega^*(W_{\mcal{H}})\simeq MU^*(W_\mcal{H}(\mbb{C}))$. In \cref{subsection:the-cobordism-class-of-wonderful}, we show that the algebraic cobordism class $[W_{\mathcal{H}}\to \mrm{pt}]\in \Omega^*(\mrm{pt})$ depends only on $M_\mcal{H}$.

\subsection{Liftings to cobordism}\label{subsection:liftings}
    In our proof of Theorem~\ref{theo:pullback-in-cobordism}, it will be useful to have an explicit $\mbb{L}^*$-module generating set for the algebraic and complex cobordism rings of $W_\mcal{H}$. We collect two general theorems which let us lift generators from the Chow ring and singular cohomology ring of a space to its algebraic and complex cobordism rings. The first such result appears as Lemma 4.1 in unpublished work of Deshpande \cite{D09}. For completeness, we include a proof.
    \begin{lem}(cf. \cite[Lemma 4.1]{D09})\label{lem:lifts-generate}
        Let $X\in \textbf{Sm}_{\mbb{C}}$. Suppose that $x_1,\dotsc,x_n$ are homogeneous elements in $\mrm{CH}^*(X)$ that generate $\mrm{CH}^*(X)$ as an abelian group. If $y_1,\dotsc,y_n$ are homogeneous elements in $\Omega^*(X)$ that map to $x_1,\dotsc, x_n$ under the universal morphism $\Omega^*(X)\to \CH^*(X)$, then $y_1,\dotsc,y_n$ generate $\Omega^*(X)$ as an $\mbb{L}^*$-module. 
    \end{lem}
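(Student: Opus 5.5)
The plan is to combine \cref{theo:universal-CH} with a graded Nakayama-type argument that exploits the dimension bound $\Omega^k(X)=0$ for $k>\dim X$ and the negative grading of $\mbb{L}^*$. Let $N\subseteq\Omega^*(X)$ be the $\mbb{L}^*$-submodule generated by $y_1,\dotsc,y_n$, and let $\mbb{L}^{<0}$ be the ideal of elements of negative degree, so that $\mbb{L}^*/\mbb{L}^{<0}\simeq\mbb{Z}$ via the map induced by $F_A$. By \cref{theo:universal-CH}, $\Omega^*(X)/\mbb{L}^{<0}\Omega^*(X)\simeq\CH^*(X)$, with quotient map the universal morphism. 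The hypothesis that the $x_i$ generate $\CH^*(X)$ as an abelian group therefore gives
\[\Omega^*(X)=N+\mbb{L}^{<0}\Omega^*(X).\]

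I then prove $\Omega^k(X)\subseteq N$ by descending induction on $k$. The base case is $k>\dim X$, where $\Omega^k(X)=0$. For the inductive step, take $\alpha\in\Omega^k(X)$. Homogeneity of the $y_i$ lets me take the decomposition above degreewise:
\[\alpha=\sum_i \ell_i y_i+\sum_j a_j\beta_j,\]
with $a_j\in\mbb{L}^{-m_j}$, $m_j\geq 1$, and $\beta_j\in\Omega^{k+m_j}(X)$. By induction each $\beta_j\in N$, so $\alpha\in N$. This argument requires that $\Omega^*(X)$ vanish in degrees above $\dim X$, which is noted in \cref{subsection:cobordism}, and that $\mbb{L}^*$ be concentrated in nonpositive degrees with degree-$0$ part $\mbb{Z}$, which follows from \cref{theo:Lazard-ring}.

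The main care point is the identification of the kernel of $\Omega^*(X)\to\CH^*(X)$ with $\mbb{L}^{<0}\Omega^*(X)$. The tensor product $\Omega^*(X)\otimes_{\mbb{L}^*}\mbb{Z}$ is exactly $\Omega^*(X)/\mbb{L}^{<0}\Omega^*(X)$, since the map $\mbb{L}^*\to\mbb{Z}$ classifying $F_A$ kills every $a_{i,j}$ with $i+j\geq2$, and those elements generate $\mbb{L}^{<0}$. The degreewise decomposition of an element of $\mbb{L}^{<0}\Omega^*(X)$ is then routine, because everything in sight is graded.
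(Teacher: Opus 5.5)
Your proposal is correct and follows essentially the same route as the paper. Both arguments reduce modulo $\mbb{L}^{<0}\Omega^*(X)$ using \cref{theo:universal-CH}, then run a descending induction on degree, using $\Omega^k(X)=0$ for $k>\dim X$ and the fact that each correction term $a_j\beta_j$ has $\beta_j$ of strictly larger degree; your identification of $\Omega^*(X)\otimes_{\mbb{L}^*}\mbb{Z}$ with $\Omega^*(X)/\mbb{L}^{<0}\Omega^*(X)$ spells out a point the paper uses without comment.
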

    \begin{proof}
    Recall the isomorphism of rings from \cref{theo:universal-CH}, \begin{equation}\label{eqn:universal-CH}
        \Omega^*(X)\otimes_{\mbb{L}^*}\mbb{Z}\xrightarrow{\sim}\mrm{CH}^*(X),
    \end{equation}
    which is induced by the universal morphism of oriented cohomology theories $\Omega^*\to \mrm{CH}^*$ of \cref{theo:universal}. Let $y\in\Omega^*(X)$ be homogeneous of degree $d$. Consider the image $\overline{y}$ of $y$ in $\mrm{CH}^*(X)$ under the isomorphism (\ref{eqn:universal-CH}). As $x_1,\dotsc,x_n$ generate $\mrm{CH}^*(X)$ as an abelian group, there are integers $\lambda_{1},\dotsc,\lambda_n\in\mbb{Z}$ such that $\overline{y}=\sum_{i=1}^n \lambda_i x_i.$
    Therefore, 
    \[y-\sum_{i=1}^n \lambda_iy_i\in \mbb{L}^{< 0}\cdot \Omega^*(X).\]
    In particular, there are homogeneous elements $\ell_i\in\mbb{L}^{< 0}$ and homogeneous elements $q_i\in\Omega^*(X)$, such that 
    \begin{equation}\label{eqn:main-eqn}
        y-\sum_{i=1}^n \lambda_iy_i=\ell_1 q_1+\dotsb+\ell_t q_t
    \end{equation}
    As $y$ is homogeneous of degree $d$ and each $y_i$ is homogeneous in $\Omega^*(X)$, we may assume that $\lambda_i=0$ whenever $\mrm{deg}(y_i)\neq d$ and that $\mrm{deg}(\ell_i q_i)=d$. As each $\ell_i$ has degree strictly less than $0$ in $\Omega^*(X)$, it follows that each $q_i$ must have degree strictly greater than $d$ in $\Omega^*(X)$.

    Our goal is to express $y$ as an $\mbb{L}^*$-linear combination of the $y_i$s. We will proceed by decreasing induction on $d$, beginning with the base cases of $d>\dim_{\mbb{C}}(X)$ and $d=\dim_{\mbb{C}}(X)$. As $\Omega^*(X)$ vanishes in degrees larger than $\dim_\mbb{C}(X)$, if $d>\mrm{dim}_\mbb{C}(X)$, then $\Omega^d(X)=(0)$. Thus, $y=0$ in this case. If $d=\mrm{dim}_\mbb{C}(X)$, then, as $\mrm{deg}(q_i)>d$, we must have that each $q_i=0$. So Equation (\ref{eqn:main-eqn}) implies that $y=\sum_{i=1}^n\lambda_i y_i$ in this case, which completes our base cases.
    
    For our inductive hypothesis, we assume that every homogeneous element in $\Omega^*(X)$ of degree greater than $d$ is expressible as an $\mbb{L}^*$-linear combination of the $y_i$s. As each $q_i$ in Equation (\ref{eqn:main-eqn}) is homogeneous of degree $>d$, each $q_i$ is expressible as an $\mbb{L}^*$-linear combination of the $y_i$s. Therefore, Equation (\ref{eqn:main-eqn}) says that $y$ is an $\mbb{L}^*$-linear combination of the $y_i$s. This completes the inductive step.

    As every homogeneous element in $\Omega^*(X)$ is a $\mbb{L}^*$-linear combination of the $y_i$s, and as every element in $\Omega^*(X)$ is an $\mbb{L}^*$-linear combination of homogeneous elements, it follows that every element in $\Omega^*(X)$ is an $\mbb{L}^*$-linear combination of the $y_i$s. Thus, $y_1,\dotsc,y_n$ generate $\Omega^*(X)$ as a $\mbb{L}^*$-module.
    \end{proof}

    The previous proof relied on the ring isomorphism $\Omega^*(X) \otimes_{\mbb{L}^*} \mbb{Z}\simeq \CH^*(X)$ of Theorem~\ref{theo:universal-CH}. In general, there is not an analogous ring isomorphism $H^*(X(\mbb{C}))\simeq MU^*(X(\mbb{C}))\otimes_{\mbb{L}^*}\mbb{Z}$ for a smooth variety $X$; see the discussion in the introduction of \cite{T97}. However, under additional hypotheses, one can use the ``Atiyah--Hirzebruch spectral sequence'' to lift generators from $H^*(X(\mbb{C}))$ to $MU^*(X(\mbb{C}))$. We refer the reader to the book of Stong \cite[Page 144]{Stong} for a proof\footnote{In \cite{Stong}, Stong works with the homological analog of complex cobordism, complex bordism $MU_*(X(\mbb{C}))$. As $X$ is a smooth, complex projective variety, there is a Poincar\'e duality isomorphism $MU_i(X(\mbb{C}))\simeq MU^{2n-i}(X(\mbb{C}))$ \cite[Chapter 14]{S02}. Our theorem statement follows from Stong's by an application of Poincar\'e duality.}.

    \begin{lem}(cf. \cite[Page 144]{Stong})\label{lem:ahssApplication}
         Let $X$ be an $n$-dimensional smooth, complex projective variety. If the integral cohomology ring of $X$ has no torsion, then $MU^*(X(\mbb{C}))$ is a free $\mbb{L}^*$-module that is isomorphic to \[\mbb{L}^*\otimes_{\mbb{Z}}H^*(X(\mbb{C}))\]
         as an $\mbb{L}^*$-module. If $g_1,\ldots , g_r$ are homogeneous elements in $H^*(X(\mbb{C}))$ that form a $\mbb{Z}$-basis for $H^*(X(\mbb{C}))$, and $\tilde{g}_1, \ldots, \tilde{g}_r$ are homogeneous elements in $MU^*(X)$  such that $\tilde{g}_i\mapsto g_i$ under the universal morphism of complex-oriented cohomology theories $MU^*(X(\mbb{C}
        ))\to H^*(X(\mbb{C}
        ))$,
        then $\tilde{g}_1, \ldots, \tilde{g}_r$ form an $\mbb{L}^*$-module basis of $MU^*(X(\mbb{C}
        ))$.
    \end{lem}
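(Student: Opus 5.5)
The plan is to run the Atiyah--Hirzebruch spectral sequence for complex cobordism and use torsion-freeness of $H^*(X(\mbb{C}))$ to force it to degenerate. This is the argument the paper cites from Stong, adapted by Poincar\'e duality. Since $X(\mbb{C})$ is a finite CW complex, the spectral sequence
\[E_2^{p,q}=H^p(X(\mbb{C});MU^q(\mrm{pt}))\Rightarrow MU^{p+q}(X(\mbb{C}))\]
converges strongly. Because $MU^*(\mrm{pt})\simeq\mbb{L}^*$ is a free abelian group concentrated in even degrees and $H^*(X(\mbb{C});\mbb{Z})$ is torsion free, the universal coefficient theorem gives $E_2\simeq H^*(X(\mbb{C}))\otimes_{\mbb{Z}}\mbb{L}^*$, which is a free $\mbb{Z}$-module.

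The key step is degeneration at $E_2$. The first move is parity: for a smooth projective variety with torsion-free cohomology one would like the cohomology to sit in even degrees, and then every differential $d_r$, which has odd total degree, vanishes automatically. That is not true for general $X$, so for the general statement the plan is to use the standard rational argument instead. Rationally, $MU^*\otimes\mbb{Q}\simeq H^*(-;\mbb{Q})\otimes\mbb{L}^*$ via the Chern--Dold character, so all differentials vanish after tensoring with $\mbb{Q}$. Each $d_r$ is a map between subquotients of torsion-free groups, so its image would be torsion-free; a nonzero map with torsion-free target stays nonzero after $\otimes\mbb{Q}$. By induction on $r$, each $E_r$ page is torsion-free, and therefore every $d_r=0$. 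I expect this inductive torsion-freeness to be the main point needing care. For our application to $W_{\mcal{H}}$ it is harmless, since its cohomology is even and generated by algebraic cycles (Lemma~\ref{lem:cycle-class}), so the parity argument already suffices there.

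With $E_\infty=E_2$ free, the filtration on $MU^*(X(\mbb{C}))$ has free associated graded pieces, and hence splits. This gives a noncanonical $\mbb{L}^*$-module isomorphism $MU^*(X(\mbb{C}))\simeq \mbb{L}^*\otimes H^*(X(\mbb{C}))$.

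For the statement about lifts, note that the edge map $MU^*(X)\to H^*(X)$, obtained from $\mbb{L}^*\to\mbb{Z}$, is the universal (Thom) morphism. A lift $\tilde g_i$ of $g_i$ therefore lies in filtration $\deg g_i$ and represents $g_i\otimes 1$ in $E_\infty$. Since the $g_i\otimes 1$ form an $\mbb{L}^*$-basis of the free associated graded module, a filtration argument in the style of Lemma~\ref{lem:lifts-generate} shows the $\tilde g_i$ span. The induction runs over the finite skeletal filtration, downward in filtration degree, and at each step one corrects by $\mbb{L}^*$-multiples of the $\tilde g_i$.

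Linear independence follows similarly. Take a relation $\sum \ell_i\tilde g_i=0$ and look at the lowest filtration degree $p$ in which some $\ell_i$ with $\deg g_i=p$ is nonzero. Projecting the relation to $E_\infty^{p,*}$ gives $\sum \ell_i g_i=0$ in the free module $E_\infty^{p,*}$, so those $\ell_i$ must vanish, a contradiction. Finally, Poincar\'e duality $MU_i\simeq MU^{2n-i}$ reconciles this with Stong's homological formulation.
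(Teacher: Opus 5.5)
Your proof is correct and is essentially the argument the paper cites from Stong: the Atiyah--Hirzebruch spectral sequence collapses because $E_2\simeq H^*(X(\mbb{C}))\otimes_{\mbb{Z}}\mbb{L}^*$ is torsion free while every differential vanishes rationally, and a filtration argument then handles the lifts. The differences are minor. You run the cohomological spectral sequence directly, so the appeal to Poincar\'e duality (which the paper needs because Stong works with bordism $MU_*$) is unnecessary in your version. Also, $d_r=0$ follows from your induction that $E_r=E_2$ is torsion free, not from the remark about subquotients, since subquotients of torsion-free groups can have torsion.
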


\subsection{The $\mbb{L}^*$-module structure of the cobordism rings of $W_{\mcal{H}}$}\label{subsection:cobordismWH}
    After a brief interlude on the cycle class map, we apply the results of Section~\ref{subsection:liftings} to $W_{\mcal{H}}$. 
    If $X$ is a smooth, projective algebraic variety over $\mbb{C}$, then every closed subvariety $Y$ of $X$ defines a class in the even degree part of the singular cohomology ring $H^{2*}(X(\mbb{C}))$.
    There is a degree-doubling ring homomorphism, called the \textit{cycle class map}, from the Chow ring $\CH^*(X)$ to the singular cohomology ring $H^*(X(\mbb{C}))$, which sends the class of a subvariety $Y\subseteq X$ in $\CH^*(X)$ to the class $[Y(\mbb{C})]$ that it defines in $H^{2*}(X)$ (see, for example, \cite[Section 1.1.2]{V16}). 

    \begin{lem}(\cite[Theorem 1 of the Appendix]{K92})\label{lem:cycle-blowup}
        Suppose that $Y$ is a variety for which the cycle map is an isomorphism, $X\hookrightarrow Y$ is a regularly embedded subvariety, and the cycle class map is an isomorphism for $X$. Then the cycle class map is an isomorphism for the blowup of $Y$ along $X$.
    \end{lem}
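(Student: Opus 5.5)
The plan is to compare the blow-up formulas in Chow theory and in singular cohomology, and to check that the cycle class map respects both decompositions. Let $\pi\colon \widetilde{Y}\to Y$ be the blowup of $Y$ along $X$, where $X$ has codimension $d$ and normal bundle $N=N_{X/Y}$. Let $j\colon E=\mbb{P}(N)\hookrightarrow \widetilde{Y}$ be the exceptional divisor, $g\colon E\to X$ the projection, and $\zeta=c_1(\mcal{O}_E(1))$ (equivalently $-E|_E$, up to sign convention).

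First, I will use the Chow-theoretic blow-up formula (Fulton, \emph{Intersection Theory}, Proposition 6.7). It says the map
\[
\CH^*(Y)\oplus \bigoplus_{i=0}^{d-2}\CH^{*-1-i}(X)\longrightarrow \CH^*(\widetilde{Y}),\qquad (a,(b_i))\mapsto \pi^*a+\sum_i j_*\bigl(\zeta^{i}\, g^*b_i\bigr)
\]
is an isomorphism of groups. Since the embedding is regular, this can also be stated in the language of Chow groups of possibly singular schemes. Next, I will establish the parallel statement in cohomology, or in Borel--Moore homology when $Y$ is singular, with the same shape of decomposition. It has three ingredients. The projective bundle formula gives $H^*(E)\simeq\bigoplus_{i=0}^{d-1}\zeta^i g^*H^*(X)$. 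The long exact sequences of the pairs $(\widetilde{Y},E)$ and $(Y,X)$ are identified by excision, since $\widetilde{Y}\setminus E\simeq Y\setminus X$. Finally, $\pi_*\pi^*=\mathrm{id}$ splits these sequences.

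Second, I will check compatibility. The cycle class map commutes with proper pushforward ($j_*$), with pullback along $\pi$ and $g$ (lci maps, using refined Gysin maps), and with first Chern classes, so it sends $\zeta$ to $\zeta$. Hence the cycle class map of $\widetilde{Y}$, transported through the two decompositions, is the direct sum of $\mathrm{cl}_Y$ and degree-shifted copies of $\mathrm{cl}_X$. By hypothesis each summand is an isomorphism, so $\mathrm{cl}_{\widetilde{Y}}$ is one too. Injectivity and surjectivity both follow, because the decompositions are honest direct sums on both sides.

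The main obstacle is the cohomological decomposition itself when $Y$ is not smooth, and making sure it is literally compatible with the Chow one, rather than merely abstractly isomorphic. Here I would work in Borel--Moore homology, where both the cycle map and the Gysin maps for regular embeddings are defined. I would then deduce the splitting from the commutative ladder of localization sequences
\[
\CH_*(E)\to \CH_*(\widetilde{Y})\to \CH_*(\widetilde{Y}\setminus E)\to 0
\]
and its homology counterpart, together with the excess intersection formula $\pi^*\pi_*$-type identity $\pi^* i_* = j_*\bigl(c_{d-1}(g^*N/\mcal{O}(-1))\cap g^*(-)\bigr)$. A five-lemma argument on these ladders, with the known isomorphisms for $X$, $E$ (via the projective bundle formula) and $Y$, then finishes the proof. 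In our application all varieties are smooth and projective (iterated blowups of $\mbb{P}(L)$ along smooth centers), so in that case the classical smooth blow-up formula suffices.
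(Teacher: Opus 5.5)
The paper gives no proof of this lemma; it quotes Keel. Your strategy is essentially Keel's. You compare Fulton's split exact blow-up sequence $0\to A_kX\to A_kE\oplus A_kY\to A_k\widetilde{Y}\to 0$ with its topological counterpart through the cycle map, which commutes with every map involved. Combined with the projective bundle formula for $E$, Fulton's sequence is exactly your decomposition.

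In the smooth projective case, which is the only case used in Lemma~\ref{lem:cycle-class}, your plan works, but one step is missing. The pair sequences and $\pi_*\pi^*=\mathrm{id}$ give $H^*(\widetilde{Y})/\pi^*H^*(Y)\cong H^*(E)/g^*H^*(X)$, and this identification is via the restriction $j^*$. To put it in your $j_*(\zeta^i g^*(-))$ shape you also need the self-intersection formula $j^*j_*=-\zeta\,\cdot$. Alternatively, cite the standard cohomological blow-up formula (e.g.\ in Voisin's \emph{Hodge Theory I}).

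Your singular-case sketch has a real flaw in its last step: the five lemma cannot be run on the Chow-versus-homology ladder of localization sequences.
\begin{itemize}
\item The Chow sequence $A_k(E)\to A_k(\widetilde{Y})\to A_k(U)\to 0$ is only right exact. Nothing on the Chow side maps onto $H^{BM}_{2k+1}(U)$.
\item The group $H^{BM}_{2k+1}(U)$ is usually nonzero, where $U=\widetilde{Y}\setminus E\cong Y\setminus X$. For $Y=\mbb{P}^2$ and $X$ two points, $H^{BM}_1(U)\cong\ker(H_0(X)\to H_0(Y))\cong\mbb{Z}$.
\item So neither injectivity of $\mathrm{cl}_{\widetilde{Y}}$ nor vanishing of $H^{BM}_{\mathrm{odd}}(\widetilde{Y})$ follows from that ladder.
\end{itemize}

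To repair it, work in Borel--Moore homology alone first:
\begin{enumerate}
\item Use the pushforward ladder from $(\widetilde{Y},E)$ to $(Y,X)$, which is the identity on $U$.
\item Add the key formula $\pi^*i_*=j_*\bigl(c_{d-1}(g^*N/\mcal{O}(-1))\cap g^*(-)\bigr)$ and $\pi_*\pi^*=\mathrm{id}$ in homology.
\item These two identities transfer from Chow because $\mathrm{cl}_X$ and $\mathrm{cl}_Y$ are surjective, and $\mathrm{cl}$ commutes with $\pi^*,\pi_*,i_*,j_*,g^*$ and Chern classes.
\item From this, derive the homology analogue $0\to H_kX\to H_kE\oplus H_kY\to H_k\widetilde{Y}\to 0$ of Fulton's sequence.
\end{enumerate}
Only then apply the five lemma to the two short exact sequences, using that $\mathrm{cl}_E$ is an isomorphism by the projective bundle formula.
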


    The following lemma is well known to experts. As we were unable to find a reference, we give a proof. The idea for this proof was communicated to us by Matt Larson. 
    \begin{lem}\label{lem:cycle-class}
        The cycle class map $c_{\mcal{H}}\colon \CH^*(W_{\mcal{H}})\to H^{2*}(W_{\mcal{H}}(\mbb{C}))$ is an isomorphism.
    \end{lem}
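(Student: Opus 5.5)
We argue by induction along the iterated blowup description of $W_{\mcal{H}}$, applying \cref{lem:cycle-blowup} at each stage. The base case is $\mbb{P}(L)$, for which the cycle class map is a classical isomorphism: both $\CH^*(\mbb{P}^{n})$ and $H^{2*}(\mbb{P}^n(\mbb{C}))$ are $\mbb{Z}[h]/(h^{n+1})$, with the hyperplane class mapping to the hyperplane class, and odd cohomology vanishes. We then want to show that the cycle class map is an isomorphism for each $W_{\mcal{H}}^{(k)}$. However, the blowup $W_{\mcal{H}}^{(k)}\to W_{\mcal{H}}^{(k-1)}$ is a blowup along a \emph{disjoint union} of strict transforms of the $\mbb{P}(L_G)$ for corank $k$ flats $G$. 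It is cleaner to perform these one at a time, since they are disjoint and the order does not matter. Each single step is then a blowup of a smooth variety along a smooth (hence regularly embedded) closed subvariety.

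The key point is that the center of each blowup is itself a wonderful variety of a smaller arrangement. The strict transform of $\mbb{P}(L_G)$ in $W_{\mcal{H}}^{(k-1)}$ has already been blown up along the strict transforms of all $\mbb{P}(L_H)\subsetneq \mbb{P}(L_G)$ with $H\supsetneq G$ proper. This is the De Concini--Procesi description: the strict transform is the iterated blowup of $\mbb{P}(L_G)$ along the intersection lattice of the restricted arrangement $\{H_i\cap L_G : i\notin G\}$. In other words, it is $W_{\mcal{H}^G}$ for the arrangement $\mcal{H}^G$ in $L_G$, whose matroid is the contraction $M_{\mcal{H}}/G$. More precisely, at the stage where we blow it up, it equals the full wonderful variety of $\mcal{H}^G$, since every larger flat has corank smaller than $k$ and has already been blown up. We will cite \cite{DP95} for this identification of strict transforms.

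Hence we run a strong induction on $\dim L$, or on the rank $r$. The statement to prove is: for every arrangement of rank $r$, the cycle class map of $W_{\mcal{H}}$ is an isomorphism. Ranks $r\le 2$ are trivial, since $W_{\mcal{H}}$ is a point or $\mbb{P}^1$. For the inductive step, every center is $W_{\mcal{H}^G}$ with $\mrm{rk}(M/G)<r$, so by induction its cycle class map is an isomorphism. Each intermediate space has an isomorphic cycle class map by the inner induction on blowup steps, starting from $\mbb{P}(L)$. \cref{lem:cycle-blowup} then propagates the isomorphism through every blowup, ending at $W_{\mcal{H}}$.

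The main obstacle is justifying that the strict transform of $\mbb{P}(L_G)$ at the moment it is blown up is smooth and isomorphic to $W_{\mcal{H}^G}$. Disjointness of the corank $k$ centers is also needed here. I would first try to extract both facts directly from \cite{DP95}, where the intermediate stages are shown to be smooth with the centers being wonderful models of induced arrangements. If that reference is not explicit enough, I would argue locally: blowing up along a subvariety transverse to or contained in $\mbb{P}(L_G)$ induces the blowup of $\mbb{P}(L_G)$ along the intersection. We also need that the isomorphism statement in \cref{lem:cycle-blowup} includes vanishing odd cohomology, which follows from the blowup formula for cohomology.
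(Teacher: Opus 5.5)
Your proposal is correct and follows essentially the same argument as the paper: both write $W_{\mcal{H}}$ as a sequence of single blowups starting from $\mbb{P}(L)$, identify each center (via the blow-up closure lemma) with the wonderful variety of the arrangement induced on $\mbb{P}(L_G)$, and conclude by induction on dimension (equivalently, rank) using \cref{lem:cycle-blowup}. The differences are cosmetic. The paper blows up along an arbitrary total order refining containment, while you proceed corank by corank and split off the disjoint centers. Your point that the inductive hypothesis should be the full version of the cycle-map isomorphism, including vanishing of odd cohomology as in Keel's theorem, makes explicit something the paper leaves implicit.
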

    \begin{proof}
        Let $G_1\succ G_2 \succ \cdots \succ G_k$ be a total order of the proper non-empty flats of $M_{\mcal{H}}$ that refines the partial order by containment. Recall that the variety $W_{\mcal{H}}$ is constructed by a sequence of blowups 
        \[W_{\mcal{H}}^{G_0} \coloneqq \mbb{P}(L) \leftarrow W_{\mcal{H}}^{G_1} \leftarrow W_{\mcal{H}}^{G_2}\leftarrow \cdots \leftarrow  W_{\mcal{H}}^{G_k} = W_{\mcal{H}}\]
        where $W_{\mcal{H}}^{G_i}$ is obtained from $W_{\mcal{H}}^{G_{i-1}}$ by blowing up the strict transform of $\mbb{P}(L_{G_i})$ in $W_{\mcal{H}}^{G_{i-1}}$. By a repeated use of the blow-up closure lemma \cite[Chapter 22]{Vakil}, one can verify that the strict transform of $\mbb{P}(L_G)$ is isomorphic to the wonderful compactification of the hyperplane arrangement $\mbb{P}\mcal{H}^G$ in $\mbb{P}(L_G)$ induced by intersecting $\mbb{P}\mcal{H}$ with $\mbb{P}(L_G)$. Our claim now follows by induction on the dimension of $W_{\mcal{H}}$ and \Cref{lem:cycle-blowup}.
    \end{proof}
    
    The next results follow from \Cref{lem:cycle-class} and Feichtner and Yuzvinsky's work comparing the Chow ring of $X_{M_\mcal{H}}$ and the singular cohomology ring of $W_{\mcal{H}}(\mbb{C})$ \cite{FY04}.
    
    \begin{theo}(\cite[Corollary 1]{FY04})\label{theo:FY-basis}
        For a proper, non-empty flat $G$ of $M_{\mcal{H}}$, let $D_G$ be the divisor defined in \Cref{defn:divisor-in-W}. The rings $\CH^*(W_{\mcal{H}})$ and $H^*(W_{\mcal{H}}(\mbb{C}))$ are free $\mbb{Z}$-modules generated by products of the classes $[D_G]$ and $[D_G(\mbb{C})]$, respectively.
    \end{theo}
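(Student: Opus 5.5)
The plan is to reduce the statement to the Chow-ring computation of Feichtner--Yuzvinsky together with the cycle class isomorphism of \Cref{lem:cycle-class}. First, for $\CH^*(W_{\mcal{H}})$: by \cite[Corollary 2]{FY04}, the pullback $\iota^*\colon \CH^*(X_{M_{\mcal{H}}})\to \CH^*(W_{\mcal{H}})$ is an isomorphism. By \Cref{lem:intersection}, it sends the class of the toric divisor $D_{\rho_G}$ to $[D_G]$. Since $\CH^*(X_{M_{\mcal{H}}})$ is generated as a ring by the toric divisor classes, $\CH^*(W_{\mcal{H}})$ is generated as a ring by the $[D_G]$. It is therefore spanned as a $\mbb{Z}$-module by monomials in the $[D_G]$. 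Using the relations $\mcal{I}_1$ and the Feichtner--Yuzvinsky Gröbner basis, these monomials can be restricted to the FY-basis monomials supported on chains of flats with bounded exponents. Freeness then follows, because $\CH^*(X_{M_{\mcal{H}}})$ is a free $\mbb{Z}$-module with exactly this monomial basis \cite[Corollary 1]{FY04}, and $\iota^*$ transports the basis.

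Second, for singular cohomology, apply \Cref{lem:cycle-class}: the cycle class map $c_{\mcal{H}}$ is a ring isomorphism $\CH^*(W_{\mcal{H}})\to H^{2*}(W_{\mcal{H}}(\mbb{C}))$. It sends $[D_G]\mapsto [D_G(\mbb{C})]$, and being a ring map it sends products of divisor classes to the corresponding cup products. Hence the image of the FY monomial basis is a $\mbb{Z}$-basis of $H^{2*}(W_{\mcal{H}}(\mbb{C}))$.

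It remains to see that the odd cohomology vanishes, so that $H^{2*}=H^*$. I would get this from the blowup description used in the proof of \Cref{lem:cycle-class}. Induct along the sequence of blowups $W^{G_i}_{\mcal{H}}$, starting from $\mbb{P}(L)$, whose cohomology is even and torsion-free. Each step blows up a smooth center that is itself a wonderful variety of smaller dimension, so by induction its cohomology is even and torsion-free. The blowup formula $H^*(\mathrm{Bl}_ZY)\simeq H^*(Y)\oplus\bigoplus_{j=1}^{c-1}H^{*-2j}(Z)$ then preserves these properties. Alternatively, \Cref{lem:cycle-class} can be read as the statement that the cycle map is an isomorphism onto all of $H^*$, which is what \cite{K92} gives.

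The main obstacle I expect is the bookkeeping in the first step: checking that $\iota^*$ really matches generators, which requires the transversality in \Cref{lem:intersection} so that pullback equals intersection. Once that is in place, the rest is transport of structure along the isomorphisms $\iota^*$ and $c_{\mcal{H}}$.
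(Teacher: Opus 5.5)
Your argument is correct and takes essentially the paper's route: the paper simply combines Feichtner--Yuzvinsky's description of $\CH^*(X_{M_{\mcal{H}}})$, transported to $W_{\mcal{H}}$ by $\iota^*$, with the cycle class isomorphism of \Cref{lem:cycle-class}, and your blowup-formula induction usefully makes explicit the vanishing of odd cohomology, which the paper leaves implicit since \Cref{lem:cycle-class} is only stated onto $H^{2*}$. One small caution: the FY monomial basis also uses the generator $x_E$, which on $W_{\mcal{H}}$ is minus the pullback of the hyperplane class of $\mbb{P}(L)$ rather than a boundary divisor, so it is not literally a basis of products of the $[D_G]$ --- but you do not need this, because spanning already follows from ring generation by the $[D_G]$, and freeness follows from $\iota^*$ being an isomorphism onto a free module.
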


    \begin{theo}\label{theo:iso-pullback}(\cite[Corollary 2]{FY04})
        The pullback map $\iota^*\colon \CH^*(X_{M_\mcal{H}})\to \CH^*(W_\mcal{H})$ induced by the inclusion $ \iota \colon W_\mcal{H}\hookrightarrow X_{M_\mcal{H}}$ is an isomorphism of rings.
    \end{theo}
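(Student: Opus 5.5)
The plan is to show that $\iota^*$ is a surjective ring map between free $\mbb{Z}$-modules of the same finite rank. Such a map is automatically an isomorphism.

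First, I would compute $\iota^*$ on generators. By \cref{rmk:chow}, specializing \cref{cor:second-FY-presentation} to $(\mbb{Z},F_A)$ shows that $\CH^*(X_{M_\mcal{H}})$ is generated as a ring by the toric divisor classes $[D_{\rho_G}]$, for $G$ a proper non-empty flat. \Cref{lem:intersection} says that $W_\mcal{H}$ and $D_{\rho_G}$ meet properly with scheme-theoretic intersection $D_G$, which is reduced and smooth. Since $D_{\rho_G}$ is a Cartier divisor whose restriction to $W_\mcal{H}$ is $D_G$, we have $\iota^*\mcal{O}(D_{\rho_G})\simeq\mcal{O}_{W_\mcal{H}}(D_G)$. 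Taking first Chern classes gives
\[\iota^*[D_{\rho_G}]=[D_G].\]
Alternatively, this follows from the Chow analogue of \cref{lem:smooth_pullbacks}. Because $\iota^*$ is a ring homomorphism, it sends each monomial in the $[D_{\rho_G}]$ to the corresponding product of the $[D_G]$.

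Second, I would deduce surjectivity. By \cref{theo:FY-basis}, $\CH^*(W_\mcal{H})$ is spanned over $\mbb{Z}$ by products of the classes $[D_G]$, and each such product lies in the image of $\iota^*$. Hence $\iota^*$ is surjective.

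Third, I would compare ranks, which I expect to be the main obstacle. The additive specialization of \cref{cor:second-FY-presentation} is exactly the Feichtner--Yuzvinsky ring of $M_\mcal{H}$. Their Gröbner basis computation shows that this ring is free over $\mbb{Z}$, with basis the monomials
\[x_{G_1}^{a_1}\cdots x_{G_k}^{a_k},\qquad \emptyset\subsetneq G_1\subsetneq\cdots\subsetneq G_k,\quad 1\le a_i<\mrm{rk}(G_i)-\mrm{rk}(G_{i-1}).\]
On the other side, \cref{lem:cycle-class} gives $\CH^*(W_\mcal{H})\simeq H^{2*}(W_\mcal{H}(\mbb{C}))$. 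The rank of $H^*(W_\mcal{H}(\mbb{C}))$ can be computed from the iterated blow-up description. Each blow-up of the strict transform of $\mbb{P}(L_G)$, which is itself a wonderful variety of smaller dimension (as in the proof of \cref{lem:cycle-class}), adds $\mrm{rk}(G)-1$ shifted copies of that variety's cohomology, by the blow-up formula.

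To finish, I would check by induction over the flats that this recursion produces the same count as the monomial basis above. Both counts satisfy the recursion obtained by fixing the smallest flat $G_1$ of the chain and its exponent $a_1$. If this bookkeeping becomes messy, I would instead directly check that the Feichtner--Yuzvinsky relations hold in $H^*(W_\mcal{H}(\mbb{C}))$, using the description of De Concini--Procesi. That would give a map from the Feichtner--Yuzvinsky ring onto $H^{2*}$ which factors through $\iota^*$, and then compare bases as in \cref{theo:FY-basis}.

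Finally, once the ranks agree, $\iota^*$ is a surjection between free $\mbb{Z}$-modules of equal finite rank. Therefore it is injective, and hence a ring isomorphism.
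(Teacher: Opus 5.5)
Your proof is correct, but it takes a different route from the argument behind the citation. The paper gives no proof of its own and cites \cite[Corollary 2]{FY04}, where the isomorphism comes from matching presentations. De Concini--Procesi computed the full ring $H^*(W_\mcal{H};\mbb{Z})$: it is generated by the $[D_G]$, with explicit relations. Feichtner--Yuzvinsky show that $\CH^*(X_{M_\mcal{H}})$ has the same presentation in the $[D_{\rho_G}]$, and $\iota^*$ matches these generators.

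\textbf{What your route changes.} On the $W_\mcal{H}$ side you use only the generation half of the De Concini--Procesi result, and you replace the relations by a rank count. The relations in $\CH^*(W_\mcal{H})$ then come out of injectivity rather than going in as input. The cost is reliance on two facts: the Feichtner--Yuzvinsky monomial basis, and the identification of strict transforms with smaller wonderful varieties (already used in \cref{lem:cycle-class}). This is the same mechanism as in \cref{theo:pullback-in-cobordism}. There the needed equality of ranks is borrowed from the present theorem, whereas you supply it independently.

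\textbf{The rank count.} The step you flagged as the main obstacle is immediate, so your fallback is unnecessary. As stated, the fallback would only reproduce $\iota^*$ and would still need a rank comparison. Let $N(M)$ be the number of Feichtner--Yuzvinsky monomials, and let $P(M)$ be the rank of $H^*(W_\mcal{H}(\mbb{C}))$. You must allow $G_k=E$ in the basis; otherwise $U_{2,3}$ would get rank $1$ instead of $2$.
\begin{itemize}
\item Fixing $(G_1,a_1)$ gives $N(M)=1+\sum_{\emptyset\neq G\subseteq E}(\mrm{rk}(G)-1)\,N(M/G)$. The term $G=E$ contributes $r-1$.
\item The blow-up formula gives $P(M)=r+\sum_{\emptyset\neq G\subsetneq E}(\mrm{rk}(G)-1)\,P(M/G)$. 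This holds because the strict transform of $\mbb{P}(L_G)$, when it is blown up, is the wonderful variety of the restricted arrangement in $L_G$, whose matroid is the contraction $M/G$.
\end{itemize}
The two recursions agree, so induction on rank gives $N=P$. You could also apply the blow-up formula to Chow groups directly and skip \cref{lem:cycle-class}.

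\textbf{A caution on circularity.} The paper justifies \cref{lem:intersection} partly by the very isomorphism you are proving. You should therefore obtain $D_{\rho_G}\cap W_\mcal{H}=D_G$ from the geometry of $W_\mcal{H}$ as a strict transform in $X_{\mcal{A}_E}$. Likewise, obtain the generation of $\CH^*(W_\mcal{H})$ by the $[D_G]$ from De Concini--Procesi together with \cref{lem:cycle-class}, or inductively from Keel's blow-up formula. Do not take either fact from anything derived via $\iota^*$.
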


    We now apply the results of Section~\ref{subsection:liftings} to the wonderful compactification $W_\mcal{H}$.
    \begin{prop}\label{lem:cobordism-lift}
        There are finite sets of products of divisors classes $[D_G\to W_\mcal{H}]$ and $[D_G(\mbb{C})\to W_{\mcal{H}}(\mbb{C})]$ that generate $\Omega^*(W_{\mcal{H}})$ and $MU^*(W_{\mcal{H}}(\mbb{C}))$ as $\mbb{L}^*$-modules. The ring $MU^*(W_{\mcal{H}}(\mbb{C}))$ is a free $\mbb{L}^*$-module.
    \end{prop}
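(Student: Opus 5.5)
The plan is to lift the Feichtner--Yuzvinsky monomial generators from $\CH^*(W_{\mcal{H}})$ and $H^*(W_{\mcal{H}}(\mbb{C}))$ to the two cobordism rings, and then apply \Cref{lem:lifts-generate} and \Cref{lem:ahssApplication}.

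\textbf{Chow side.} By \Cref{theo:FY-basis}, $\CH^*(W_{\mcal{H}})$ is a free $\mbb{Z}$-module with a finite basis of monomials $[D_{G_1}]^{a_1}\cdots[D_{G_k}]^{a_k}$ in the divisor classes. We may choose the basis to be homogeneous, since it comes from monomials. Set $y_{\mathbf{a}}:=[D_{G_1}\to W_{\mcal{H}}]^{a_1}\cdots[D_{G_k}\to W_{\mcal{H}}]^{a_k}\in\Omega^*(W_{\mcal{H}})$.

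Each $D_G$ is a smooth divisor, so by the Section Axiom $[D_G\to W_{\mcal{H}}]=c_1(\mcal{O}(D_G))$ in $\Omega^*$. Likewise $[D_G]=c_1(\mcal{O}(D_G))$ in $\CH^*$. The universal morphism $\Omega^*\to\CH^*$ of \Cref{theo:universal} is a morphism of oriented cohomology theories, so it is a ring homomorphism preserving first Chern classes. Hence $y_{\mathbf{a}}$ maps to the corresponding monomial basis element. \Cref{lem:lifts-generate} then shows that the finitely many $y_{\mathbf{a}}$ generate $\Omega^*(W_{\mcal{H}})$ as an $\mbb{L}^*$-module.

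\textbf{Topological side.} $W_{\mcal{H}}$ is smooth and projective. By \Cref{lem:cycle-class} and \Cref{theo:FY-basis}, $H^*(W_{\mcal{H}}(\mbb{C}))$ equals the image of the cycle class map. It is therefore concentrated in even degrees, torsion-free, and has a $\mbb{Z}$-basis of monomials in the classes $[D_G(\mbb{C})]$.

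In $MU^*$, the class $[D_G(\mbb{C})\to W_{\mcal{H}}(\mbb{C})]$ is the $MU$-first Chern class of $\mcal{O}(D_G)$. This holds because $MU^*$ restricted to $\textbf{Sm}_{\mbb{C}}$ is an oriented cohomology theory whose $c_1$ of a line bundle with a transverse section is the class of its zero locus. The map $MU^*\to H^*$ is the universal complex-oriented map, so it is multiplicative and sends $c_1^{MU}$ to $c_1^{H}$. Therefore the corresponding monomials $\tilde g_i$ in the $MU$-divisor classes map to the chosen basis $g_i$.

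\Cref{lem:ahssApplication} applies since the cohomology has no torsion. It gives that $MU^*(W_{\mcal{H}}(\mbb{C}))$ is a free $\mbb{L}^*$-module, isomorphic to $\mbb{L}^*\otimes_{\mbb{Z}} H^*(W_{\mcal{H}}(\mbb{C}))$, with basis the $\tilde g_i$. In particular these finitely many products generate.

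\textbf{Main obstacle.} The step needing the most care is the compatibility of products and divisor classes under the comparison maps. Two points must be checked:
\begin{enumerate}
\item that the product in $MU^*$ of the classes $[D_G(\mbb{C})]$ is sent to the cup product of the fundamental classes in singular cohomology;
\item that these fundamental classes agree with the cycle-class images of $[D_G]$.
\end{enumerate}
I would handle both by passing through first Chern classes of line bundles as above, rather than through transversality of the intersections. This avoids any need to show that the $D_G$ meet transversally. Multiplicativity and naturality of $c_1$ under morphisms of (complex-)oriented theories then give everything.
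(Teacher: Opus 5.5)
Your proposal is correct and is essentially the paper's own argument. It identifies $[D_G\to W_{\mcal{H}}]$, $[D_G]$, and their topological counterparts with first Chern classes of $\mcal{O}(D_G)$, uses that the comparison morphisms are ring maps preserving $c_1$, and concludes via \Cref{theo:FY-basis} together with \Cref{lem:lifts-generate} and \Cref{lem:ahssApplication}; the detour through the cycle class map on the topological side is unnecessary, since \Cref{theo:FY-basis} already provides the monomial $\mbb{Z}$-basis of $H^*(W_{\mcal{H}}(\mbb{C}))$ and hence torsion-freeness.
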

    \begin{proof}
        As $D_G$ is a smooth divisor in $W_{\mcal{H}}$, $[D_G\to W_\mcal{H}]$ and $[D_G(\mbb{C})\to W_{\mcal{H}}(\mbb{C})]$ are the first Chern classes of $\mcal{O}_{W_{\mcal{H}}}(D_G)$ in $\Omega^*(W_{\mcal{H}})$ and $MU^*(W_{\mcal{H}}(\mbb{C}))$, respectively. Similarly, the first Chern classes of $\mcal{O}(D_G)$ in $\CH^*(W_{\mcal{H}})$ and $H^*(W_{\mcal{H}}(\mbb{C}))$ are equal to $[D_G]$ and $[D_G(\mbb{C})]$. As a morphism of cohomology theories sends Chern classes to Chern classes, the claim now follows from \Cref{theo:FY-basis} and Lemmas \ref{lem:lifts-generate} and \ref{lem:ahssApplication}. 
    \end{proof}

\subsection{The isomorphisms}\label{subsection:ring-presentation}
    We prove in \cref{theo:pullback-in-cobordism} of this subsection that the inclusion $\iota\colon W_\mcal{H}\hookrightarrow X_{M}$ and universal morphism $\vartheta\colon \Omega^*\to MU^*$ induce $\mbb{L}^*$-algebra isomorphisms $\Omega^*(X_M)\simeq \Omega^*(W_\mcal{H})$ and $\Omega^*(W_\mcal{H})\simeq MU^*(W_\mcal{H}(\mbb{C}))$.
    
    \begin{prop}\label{prop:map:omega-MU}     
        The universal morphism $\vartheta\colon \Omega^*\to MU^{2*}$ induces a \textit{surjection} of $\mbb{L}^*$-algebras 
        \[\vartheta_{W_\mcal{H}}\colon \Omega^*(W_\mcal{H})\to MU^{2*}(W_\mcal{H}(\mbb{C})).\]
    \end{prop}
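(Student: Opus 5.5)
The plan is to show that the image of $\vartheta_{W_\mcal{H}}$ contains an $\mbb{L}^*$-module generating set of $MU^{2*}(W_\mcal{H}(\mbb{C}))$. The morphism $\vartheta\colon \Omega^*\to MU^{2*}$ is a morphism of oriented cohomology theories by \cref{theo:universal}. So for each $X$ it gives a ring homomorphism $\Omega^*(X)\to MU^{2*}(X(\mbb{C}))$ that is compatible with pullback along $X\to \mrm{pt}$. On the point it is the identity of $\mbb{L}^*$, since both coefficient rings are identified with the Lazard ring through their formal group law $F_U$, and $\vartheta$ preserves formal group laws. Hence $\vartheta_{W_\mcal{H}}$ is a homomorphism of $\mbb{L}^*$-algebras, and only surjectivity needs proof.

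For surjectivity, I would first use that morphisms of oriented cohomology theories commute with first Chern classes. Therefore
\[\vartheta_{W_\mcal{H}}\bigl([D_G\to W_\mcal{H}]\bigr)=\vartheta_{W_\mcal{H}}\bigl(c_1(\mcal{O}_{W_\mcal{H}}(D_G))\bigr)=c_1(\mcal{O}_{W_\mcal{H}}(D_G))=[D_G(\mbb{C})\to W_\mcal{H}(\mbb{C})].\]
The last equality in $MU^*$ is the same smooth-divisor identity used in the proof of \cref{lem:cobordism-lift}. Because $\vartheta_{W_\mcal{H}}$ is a ring map, it sends each product $\prod_j [D_{G_j}\to W_\mcal{H}]$ to the corresponding product $\prod_j [D_{G_j}(\mbb{C})\to W_\mcal{H}(\mbb{C})]$.

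By \cref{lem:cobordism-lift}, finitely many such products of the complex classes generate $MU^*(W_\mcal{H}(\mbb{C}))$ as an $\mbb{L}^*$-module. Each of these products is the image of the matching product of algebraic divisor classes. Since the map is $\mbb{L}^*$-linear, it is therefore surjective.

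The main point to check carefully is that the products in \cref{lem:cobordism-lift} really are generators on the $MU$ side. That lemma comes from \cref{lem:ahssApplication}, which needs $H^*(W_\mcal{H}(\mbb{C}))$ to be torsion free with a basis of monomials in the $[D_G(\mbb{C})]$. Both facts are supplied by \cref{theo:FY-basis} together with \cref{lem:cycle-class}; the cycle class isomorphism also shows that there is no odd cohomology. A second, minor point is the grading: the target is $MU^{2*}$, and the even-degree part already contains everything because $H^{\mrm{odd}}=0$. I would note this explicitly and identify $MU^{2*}$ with $MU^*$ here.
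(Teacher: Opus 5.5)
Your argument is correct. It reaches the conclusion by a slightly different route from the paper's.

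\textbf{The paper's route.} The paper takes a finite homogeneous $\mbb{L}^*$-generating set $h_1,\dotsc,h_k$ of $\Omega^*(W_\mcal{H})$ and maps it to $\CH^*(W_\mcal{H})$. It then uses the commutative square comparing $\vartheta_{W_\mcal{H}}$ with the cycle class map $c_\mcal{H}$ (quoted from \cite{HK11}), together with the fact that $c_\mcal{H}$ is an isomorphism (\cref{lem:cycle-class}). This shows that the images $\widetilde{g}_i \in MU^{2*}(W_\mcal{H}(\mbb{C}))$ reduce to generators of $H^{2*}(W_\mcal{H}(\mbb{C}))$, and \cref{lem:ahssApplication} then gives generation.

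\textbf{Your route.} You never pass through $\CH^*$ or the cycle class map. Compatibility of $\vartheta$ with first Chern classes and with products identifies the image of each algebraic divisor monomial with the corresponding topological divisor monomial. The $MU$ half of \cref{lem:cobordism-lift} says the latter already generate.

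\textbf{What each buys.} Your route needs no comparison square. It also applies \cref{lem:ahssApplication} exactly in its stated form, to lifts of the Feichtner--Yuzvinsky monomial basis. The paper's proof records only that the $g_i$ generate, whereas that lemma is stated for a basis. For the same reason, \cref{lem:cycle-class} is not really needed in your argument beyond what \cref{theo:FY-basis} already asserts about $H^*(W_\mcal{H}(\mbb{C}))$.

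The paper's route is more portable. It gives surjectivity of $\Omega^*(X)\to MU^{2*}(X(\mbb{C}))$ for any smooth projective $X$ whose cohomology is torsion-free and even and whose cycle class map is bijective. It does not need to know that the cohomology is spanned by monomials in divisor classes.
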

    \begin{proof}
        By \cref{lem:cobordism-lift}, we can choose a finite set of homogeneous elements $h_1,\dotsc,h_k$ that generate $\Omega^*(W_\mcal{H})$ as an $\mbb{L}^*$-module. Say that $h_i\mapsto g_i$ under the natural map $\Omega^*(W_\mcal{H})\to \CH^*(W_\mcal{H})$. Then the $g_i$ generate $\CH^*(W_\mcal{H})$ as a $\mbb{Z}$-module by surjectivity of the natural map $\Omega^*(W_\mcal{H})\to \CH^*(W_\mcal{H})$. 
        In addition, let $\widetilde{g}_i$ be the image of $h_i$ under the natural map $\Omega^*(W_\mcal{H})\to MU^{2*}(W_\mcal{H}(\mbb{C}))$. As the natural maps $\Omega^*(W_\mcal{H})\to \CH^*(W_\mcal{H})$ and $\Omega^*(W_\mcal{H})\to MU^{2*}(W_\mcal{H}(\mbb{C}))$ are homomorphisms of graded rings, it follows that the $g_i$ are homogeneous in $\CH^*(W_\mcal{H})$ and the $\widetilde{g}_i$ are homogeneous in $MU^{2*}(W_\mcal{H}(\mbb{C}))$.       
        As explained in \cite[Section 6]{HK11}, the following diagram commutes:
        \[
        \begin{tikzcd}
            \Omega^*(W_\mcal{H}) \arrow[r,"\vartheta_{W_\mcal{H}}"] \arrow[d] & MU^{2*}(W_\mcal{H}(\mbb{C})) \arrow[d]\\
            \CH^*(W_\mcal{H}) \arrow[r,"c_\mcal{H}"] & H^{2*}(W_\mcal{H}(\mbb{C}).)
        \end{tikzcd}
        \]
        By commutativity of the diagram and the fact that $c_\mcal{H}$ is an isomorphism of graded rings, it follows from \cref{lem:ahssApplication} that the $\widetilde{g}_i$ generate $MU^{2*}(W_\mcal{H}(\mbb{C}))$ as an $\mbb{L}^*$-module. Thus, the map $\Omega^*(W_\mcal{H})\to MU^{2*}(W_\mcal{H}(\mbb{C}))$ is surjective.
    \end{proof}

     \begin{lem}\label{lem:pullback-of-inclusion}
        The pullback map $\iota^*\colon\Omega^*(X_M)\to\Omega^*(W_\mcal{H})$ induced by the inclusion $\iota\colon W_\mcal{H}\hookrightarrow X_M$ is surjective. 
    \end{lem}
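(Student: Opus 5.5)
The plan is to show that the image of $\iota^*$ contains a set of $\mbb{L}^*$-module generators of $\Omega^*(W_\mcal{H})$, using \cref{lem:cobordism-lift}. That proposition provides a finite set of products of divisor classes $[D_{G_1}\to W_\mcal{H}]\cdots[D_{G_k}\to W_\mcal{H}]$ which generate $\Omega^*(W_\mcal{H})$ as an $\mbb{L}^*$-module. Since $\iota^*$ is a homomorphism of $\mbb{L}^*$-algebras (pullbacks in an oriented cohomology theory are ring maps compatible with the $\mbb{L}^*$-structure coming from $\mrm{pt}$), it suffices to show that each single divisor class $[D_G\to W_\mcal{H}]$ lies in the image of $\iota^*$. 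Products of such classes are then automatically in the image.

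For a proper non-empty flat $G$, I would compute $\iota^*[D_{\rho_G}\hookrightarrow X_{M_\mcal{H}}]$. There are two ways to do this, and I would present the cleaner one.

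The first way is via \cref{lem:smooth_pullbacks}. The inclusion $\iota$ is a closed embedding of smooth varieties, hence a regular embedding. By \cref{lem:intersection}, the fibre product $D_{\rho_G}\times_{X_{M_\mcal{H}}} W_\mcal{H}$, which is the scheme-theoretic intersection, equals $D_G$, and $D_G$ is smooth. \Cref{lem:smooth_pullbacks} then gives
\[\iota^*[D_{\rho_G}\hookrightarrow X_{M_\mcal{H}}]=[D_G\hookrightarrow W_\mcal{H}].\]

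The second way, as a cross-check, uses first Chern classes. We have $[D_{\rho_G}\hookrightarrow X_{M_\mcal{H}}]=c_1(\mcal{O}(D_{\rho_G}))$, and pullbacks commute with $c_1$. So the question becomes whether $\iota^*\mcal{O}(D_{\rho_G})\simeq \mcal{O}_{W_\mcal{H}}(D_G)$. This follows from the scheme-theoretic equality $D_{\rho_G}\cap W_\mcal{H}=D_G$ in \cref{lem:intersection}: the restriction of the defining section of $\mcal{O}(D_{\rho_G})$ to $W_\mcal{H}$ cuts out exactly $D_G$ and is transverse.

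The main point needing care is the hypothesis check in \cref{lem:smooth_pullbacks}, namely that the fibre product is the smooth scheme $D_G$ and not something nonreduced. This is precisely the content of \cref{lem:intersection}, including the proper-intersection statement, so I expect no real obstacle beyond citing it correctly.

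With every $[D_G\to W_\mcal{H}]$ in the image, the generating products are in the image as well, since $\iota^*$ is multiplicative. Because the image is an $\mbb{L}^*$-submodule, $\iota^*$ is surjective.
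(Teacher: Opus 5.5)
Your proposal is correct and follows the paper's proof. You compute $\iota^*[D_{\rho_G}\hookrightarrow X_M]=[D_G\hookrightarrow W_\mcal{H}]$ via \cref{lem:smooth_pullbacks} and \cref{lem:intersection}, and then use \cref{lem:cobordism-lift}: products of the $[D_G]$ generate $\Omega^*(W_\mcal{H})$ as an $\mbb{L}^*$-module, so they lie in the image of the $\mbb{L}^*$-algebra map $\iota^*$. The paper's additional appeal to \cref{prop:FY-presentation} is not needed for surjectivity, so your version is, if anything, slightly leaner.
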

    \begin{proof}
        By \cref{prop:FY-presentation}, $\Omega^*(X_M)$ is generated as an $\mbb{L}^*$-algebra by the elements $[D_{\rho_G}\to X_M]$ for $G$ a proper non-empty flat of $M$. It follows that $\iota^*$ is defined by where it sends these generators. From \Cref{lem:smooth_pullbacks} and \Cref{lem:intersection}, we conclude that $\iota^*([D_{\rho_G}\hookrightarrow X_M])=[D_G \hookrightarrow W_{\mcal{H}}]$. \Cref{lem:cobordism-lift} implies that the classes $[D_G\hookrightarrow W_\mcal{H}]$ generate $\Omega^*(W_\mcal{H})$ as an $\mbb{L}^*$-algebra. From this, we conclude surjectivity of $\iota^*$. %
    \end{proof}

    \begin{theo}\label{theo:pullback-in-cobordism}
        Both $\iota^*\colon \Omega^*(X_{M_\mcal{H}})\to \Omega^*(W_\mcal{H})$ and $\vartheta\colon \Omega^*(W_\mcal{H})\to MU^{2*}(W_\mcal{H}(\mbb{C}))$ are isomorphisms of $\mbb{L}^*$-algebras.
    \end{theo}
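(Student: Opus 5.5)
We already know $\iota^*\colon \Omega^*(X_{M_\mcal{H}})\to \Omega^*(W_\mcal{H})$ is surjective (\cref{lem:pullback-of-inclusion}) and $\vartheta_{W_\mcal{H}}\colon \Omega^*(W_\mcal{H})\to MU^{2*}(W_\mcal{H}(\mbb{C}))$ is surjective (\cref{prop:map:omega-MU}). Both are $\mbb{L}^*$-algebra maps, so it remains to prove injectivity. I would show that the composite
\[\vartheta_{W_\mcal{H}}\circ\iota^*\colon \Omega^*(X_{M_\mcal{H}})\to MU^{2*}(W_\mcal{H}(\mbb{C}))\]
is an isomorphism. Then $\iota^*$ is injective, hence bijective, and consequently $\vartheta_{W_\mcal{H}}$ is bijective as well.

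First I would record the structure of source and target as $\mbb{L}^*$-modules. By \cref{thrm:simplicial} and \cref{cor:free}, $\Omega^*(X_{M_\mcal{H}})\simeq \mbb{L}^*\otimes_{\mbb{Z}}\CH^*(X_{M_\mcal{H}})$ is free of finite rank. By \cref{lem:cycle-class} and \cref{theo:FY-basis}, $H^*(W_\mcal{H}(\mbb{C}))$ is torsion-free and concentrated in even degrees. So \cref{lem:ahssApplication} gives $MU^{2*}(W_\mcal{H}(\mbb{C}))\simeq \mbb{L}^*\otimes_{\mbb{Z}}H^{2*}(W_\mcal{H}(\mbb{C}))$, free, with basis any lift of a $\mbb{Z}$-basis of cohomology.

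Next I would produce compatible bases. Choose a homogeneous $\mbb{Z}$-basis $b_1,\dots,b_N$ of $\CH^*(X_{M_\mcal{H}})$. Pick lifts $\tilde b_j\in\Omega^*(X_{M_\mcal{H}})$, for instance via the isomorphism of \cref{thrm:simplicial}. I claim the $\tilde b_j$ form an $\mbb{L}^*$-basis. They generate by \cref{lem:lifts-generate}. They are free because a surjection between free $\mbb{L}^*$-modules of the same finite rank in each graded piece is an isomorphism; a cleaner argument is that a surjective endomorphism of a finitely generated module is an isomorphism. Now consider the commutative square from the proof of \cref{prop:map:omega-MU}, combined with naturality of $\Omega^*\to\CH^*$ under $\iota^*$. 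The images $\vartheta(\iota^*(\tilde b_j))$ map to $c_\mcal{H}(\iota^*b_j)$ in $H^{2*}(W_\mcal{H}(\mbb{C}))$. By \cref{theo:iso-pullback} and \cref{lem:cycle-class}, the elements $c_\mcal{H}(\iota^* b_j)$ form a $\mbb{Z}$-basis of $H^{2*}(W_\mcal{H}(\mbb{C}))$. Hence, by \cref{lem:ahssApplication}, the elements $\vartheta\iota^*(\tilde b_j)$ form an $\mbb{L}^*$-basis of $MU^{2*}(W_\mcal{H}(\mbb{C}))$.

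The composite therefore sends a basis to a basis and is an isomorphism, which gives both claims of the theorem. The main obstacle is the freeness claim for the lifts $\tilde b_j$ in $\Omega^*(X_{M_\mcal{H}})$. It holds because we have an explicit isomorphism with $\mbb{L}^*\otimes\CH^*$, and a surjection $\mbb{L}^{*N}\to \mbb{L}^{*N}$ of finitely generated modules over a commutative ring is an isomorphism. A second point needing care is the commutativity of the square $\Omega^*\to MU^{2*}\to H^{2*}$ versus $\Omega^*\to\CH^*\to H^{2*}$ together with pullback functoriality. I would take this from \cite[Section 6]{HK11}, since all maps involved are morphisms of oriented cohomology theories and hence commute with $\iota^*$.
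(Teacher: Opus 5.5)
Your proof is correct and follows the paper's strategy. Both arguments show that the composite $\tau=\vartheta\circ\iota^*$ is an isomorphism, then deduce that $\iota^*$ is injective, hence bijective by \cref{lem:pullback-of-inclusion}, and finally that $\vartheta=\tau\circ(\iota^*)^{-1}$ is bijective. The only difference is how $\tau$ is shown to be bijective: the paper uses surjectivity of both factors plus equal ranks of the free $\mbb{L}^*$-modules, while you check that $\tau$ sends an $\mbb{L}^*$-basis of lifts to an $\mbb{L}^*$-basis (via \cref{theo:iso-pullback}, the comparison square, and \cref{lem:ahssApplication}), so you never need the surjectivity of $\vartheta$ from \cref{prop:map:omega-MU}.
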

    \begin{proof}
        Consider the composition of $\mbb{L}^*$-algebra maps:
        \[
    \begin{tikzcd}
    \tau\colon \Omega^*(X_M)\arrow[r,"\iota^*"] & \Omega^*(W_\mcal{H})\arrow[r,"\vartheta"] & MU^{2*}(W_\mcal{H}(\mbb{C})).
    \end{tikzcd}
    \] 
       By \cref{prop:map:omega-MU} and \cref{lem:pullback-of-inclusion}, both $\iota^*$ and $\vartheta$ are surjective maps of $\mbb{L}^*$-algebras, so the composition $\tau$ is surjective as well. We know from \cref{cor:free} and \cref{lem:cobordism-lift} that $\Omega^*(X_M)$ and $MU^{2*}(W_\mcal{H}(\mbb{C}))$ are free $\mbb{L}^*$-modules of the same rank, with that rank being the $\mbb{Z}$-module rank of $\CH^*(W_\mcal{H})$. As $\mbb{L}^*$ is an integral domain and $\tau$ is a surjective $\mbb{L}^*$-algebra morphism between free $\mbb{L}^*$-modules of the same rank, $\tau$ must be an isomorphism. In particular, $\tau=\vartheta\circ \iota^*$ is injective, which means that $\iota^*$ is injective. As $\iota^*$ is both injective and surjective, it must be an isomorphism. Thus, $\vartheta=\tau\circ (\iota^*)^{-1}$ is an isomorphism as well.
    \end{proof}

\begin{rmk}[The complex cobordism of $X_{M_{\mathcal{H}}}$]\label{rmk:binder}
Consider the following commutative diagram of rings
\begin{equation}\label{eq:cobordism}
        \begin{tikzcd}
            \Omega^*(X_{M_\mcal{H}}) \arrow[r,"\vartheta_{M_\mcal{H}}"]  \arrow[d,"\iota^{\Omega^*}"] &
            MU^{2*}(X_{M_\mcal{H}}(\mbb{C})) \arrow[d,"\iota^{MU^{*}}"] \\
            \Omega^*(W_\mcal{H}) \arrow[r,"\vartheta_{W_\mcal{H}}"] &
            MU^{2*}(W_\mcal{H}(\mbb{C})) 
        \end{tikzcd}
    \end{equation}
    where the horizontal arrows are induced by the universal morphism $\vartheta: \Omega^*\to MU^{2*}$ of oriented cohomology theories and the vertical arrows are induced by the inclusion map $\iota: W_{\mcal{H}}\to X_{M_{\mcal{H}}}$. We have proven that $\vartheta_{W_\mcal{H}} \circ \iota^{\Omega^*}$ is a ring isomorphism. This implies that $\vartheta_{M_{\mcal{H}}}$ is injective and $\iota^{MU^*}$ is surjective. However, we do not know whether $\vartheta_{M_{\mcal{H}}}$ and $\iota^{MU^*}$ are ring isomorphisms. In the analogous diagram
    \begin{equation}\label{eq:cohomology}
        \begin{tikzcd}
            \CH^*(X_{M_\mcal{H}}) \arrow[r,"c_{M_\mcal{H}}"]  \arrow[d,"\iota^{\CH^*}"] &
            H^{2*}(X_{M_\mcal{H}}(\mbb{C})) \arrow[d,"\iota^{H^{*}}"] \\
            \CH^*(W_\mcal{H}) \arrow[r,"c_{W_\mcal{H}}"] &
            H^{2*}(W_\mcal{H}(\mbb{C})) 
        \end{tikzcd}
    \end{equation}
    we have that $\iota^{\CH^*}$ and $c_{W_{\mcal{H}}}$ are always ring isomorphisms but $c_{M_{\mcal{H}}}$ and $\iota^{H^*}$ can fail to be ring isomorphisms. This is because the singular cohomology of $X_{M_\mcal{H}}$ can have non-zero even cohomology that is not of Hodge--Tate type.  For example, in work of Binder \cite{Binder26}, it is shown that if $\mcal{H}$ is an arrangement of $4$ points in $\mbb{P}^1$, then $\mrm{Gr}_6 H^4(X_{M_\mcal{H}}) = \mbb{Q}^2$. In particular, $H^4(X_{M_{\mcal{H}}}(\mbb{C}))$ is non-zero. However, $W_{\mcal{H}}\simeq \mbb{P}^1$ so the sequence of maps 
     \[\CH^*(W_{\mcal{H}}) \to H^{2*}(X_{M_{\mcal{H}}}(\mbb{C})) \to H^{2*}(W_{\mcal{H}}(\mbb{C}))\]
    cannot consist of ring isomorphisms. 
    
    We note to the reader that \cref{eq:cohomology} does not immediately tell us information about \cref{eq:cobordism}. As the formal group laws of singular cohomology and Chow theory are additive, one can try to tensor \cref{eq:cobordism} with $\mbb{Z}$ via the map $\mbb{L}^*\to \mbb{Z}$ induced by the additive formal group law. This successfully recovers three of the terms of \cref{eq:cohomology}. However, as $X_{M_\mcal{H}}(\mbb{C})$ is not compact, we do not know how $MU^*(X_{M_\mcal{H}}(\mbb{C})) \otimes_{\mbb{L}^*} \mbb{Z}$ compares to $H^*(X_{M_\mcal{H}}(\mbb{C}))$. The comparison between the functors $MU^*(-)\otimes_{\mbb{L}^*} \mbb{Z}$ and $H^*(-)$ is quite subtle; see \cite{T97}.
    
    In \cite{Binder26}, Binder extensively studies the singular cohomology rings $H^*(X_M)$ of matroids \cite{Binder26}. It would be interesting to see if one can extend their work to the complex cobordism rings $MU^*(X_M)$ of matroids.
\end{rmk}

\subsection{On the cobordism class of the wonderful variety}\label{subsection:the-cobordism-class-of-wonderful}
    As $W_\mathcal{H}$ is a smooth projective complex variety, it determines a class $[W_{\mathcal{H}}\to \mrm{pt}]\in \Omega^*(\mrm{pt})$. In this subsection, we show that this class only depends on the matroid $M_\mcal{H}$ of $H$. We first recall the classical classification of classes in $MU^*(\mrm{pt})$ by their Chern numbers \cite{Q71}.

\begin{defn}
    Let $X$ be a compact complex manifold of complex dimension $n$. Let $\lambda=(\lambda_1,\dotsc,\lambda_r)$ be a partition of $n$, i.e., $\lambda$ is a sequence of weakly increasing positive integers such that $\lambda_1+\lambda_2+\dotsb+ \lambda_r=n$. The \textbf{$\lambda$-th Chern number} of $X$ is defined to be the integer
    \[c_\lambda(X):=\langle c_{\lambda_1}(T_X)\dotsb c_{\lambda_r}(T_X),[X]\rangle, \]
    where $T_X$ is the tangent bundle of $X$, $[X]$ is the fundamental class of $X$ in the homology group $H_*(X)$, and $\langle \text{ $,$ }\rangle$ is the Poincar\'{e} duality pairing between $H^*(X)$ and $H_*(X)$. The \textbf{total Chern class} $c(E)$ of a rank $r$ vector bundle $E$ on $X$ is defined to be the element $c(E):=\sum_{i=0}^{r}c_i(E)\in H^*(X)$.
\end{defn}
    
\begin{theo}(\cite[Theorem 6.5]{Q71})\label{theo:cobordant-manifolds}
    Suppose $X_1$ and $X_2$ are two compact complex manifolds. The classes $[X_1\to \mrm{pt}]$ and $[X_2\to \mrm{pt}]$ in $MU^*(\mrm{pt})$ are equal if and only if $X_1$ and $X_2$ have the same Chern numbers.
\end{theo}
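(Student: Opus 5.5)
The statement is Quillen's theorem, and the plan is to derive it from Quillen's identification $MU^*(\mrm{pt})\simeq \mbb{L}^*$ together with the Hurewicz-type map to rational homology. Both directions of the equivalence reduce to showing that the map
\[
MU^{-2n}(\mrm{pt}) \to \mbb{Z}^{p(n)},\qquad [X\to \mrm{pt}] \mapsto (c_\lambda(X))_{\lambda\vdash n},
\]
is well defined and injective. Here $p(n)$ is the number of partitions of $n$.

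For the easy direction (equal classes imply equal Chern numbers), I would show that Chern numbers are cobordism invariants. Suppose $X_1\sqcup \overline{X_2}=\partial W$ for a stably almost complex $W$. The classes $c_\lambda(T_W)$ restrict to $c_\lambda(T_{X_i}\oplus \mbb{R})=c_\lambda(T_{X_i})$ on the boundary. Pairing with $[\partial W]=\partial [W]$, Stokes' theorem (equivalently, exactness of $H^*(W)\to H^*(\partial W)\to H^{*+1}(W,\partial W)$) shows that the Chern numbers of the boundary vanish. Since the orientation of $\overline{X_2}$ is reversed, this gives $c_\lambda(X_1)=c_\lambda(X_2)$.

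The substantive direction is injectivity. I would use the Thom–Pontryagin construction, $MU_{2n}\simeq \pi_{2n}(MU)$, and the Hurewicz map $\pi_{2n}(MU)\to H_{2n}(MU;\mbb{Z})\simeq H_{2n}(BU;\mbb{Z})$, the last isomorphism coming from the Thom isomorphism. Under this composite, $[X]$ is sent to the image of the stable normal bundle's classifying map applied to $[X]$. Dually, it is the functional on $H^{2n}(BU)=\mbb{Z}\{c_\lambda\}$ given by the normal Chern numbers, and these determine and are determined by the tangent Chern numbers via $c(\nu)=c(T_X)^{-1}$. So it suffices to know that the Hurewicz map $\pi_*(MU)\to H_*(MU)$ is injective. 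For this I would invoke Milnor's computation, via the Adams spectral sequence, that $\pi_*(MU)$ is torsion-free. Serre's theorem gives that $\pi_*(MU)\otimes \mbb{Q}\to H_*(MU;\mbb{Q})$ is an isomorphism, because $MU$ is rationally a product of Eilenberg–MacLane spectra. Together these imply injectivity integrally.

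The main obstacle is torsion-freeness of $\pi_*(MU)$, i.e.\ $\mbb{L}^*\simeq \mbb{Z}[t_1,t_2,\ldots]$ having no torsion. This is exactly Lazard's theorem (\Cref{theo:Lazard-ring}) combined with Quillen's isomorphism $MU^*(\mrm{pt})\simeq \mbb{L}^*$, recalled in \Cref{egg-complex-cobordism}. I would cite these rather than rerun the Adams spectral sequence argument. With them, injectivity follows from the rational statement.
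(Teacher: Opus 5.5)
The paper gives no proof of this statement. It is quoted from Quillen and used as a black box, to show that $[W_{\mcal{H}}\to \mrm{pt}]$ depends only on $M_{\mcal{H}}$, so there is no internal argument to compare against.

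Your proposal correctly reconstructs the classical Milnor--Novikov argument. Under Pontryagin--Thom, the Chern-number map on $MU^{-2n}(\mrm{pt})\simeq \pi_{2n}(MU)$ is a composite of three steps:
\begin{enumerate}
\item the Hurewicz map to $H_{2n}(MU)\simeq H_{2n}(BU)$;
\item the perfect Kronecker pairing with the free group $H^{2n}(BU)=\mbb{Z}\{c_\lambda\}$;
\item the integral unimodular change of basis $c(\nu)=c(T_X)^{-1}$ from normal to tangent Chern numbers.
\end{enumerate}
The Hurewicz map is injective because its kernel lies in the kernel of $\pi_*(MU)\to \pi_*(MU)\otimes\mbb{Q}\simeq H_*(MU;\mbb{Q})$. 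That kernel is the torsion subgroup, which is zero.

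Two comments. First, once you have this description, the easy direction is automatic: the composite is a homomorphism defined on bordism classes. Your Stokes argument is therefore correct but redundant. In it, $c_\lambda(T_{X_i}\oplus\mbb{R})$ should be read as Chern classes of the stable complex structure.

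Second, and more substantively, all the depth of the theorem sits in the torsion-freeness you import from $MU^*(\mrm{pt})\simeq \mbb{L}^*\simeq \mbb{Z}[t_1,t_2,\ldots]$. This is legitimate relative to what the paper already assumes, and it is not circular. In Quillen's Steenrod-operation approach, one proves that $\mbb{L}^*\to MU^*(\mrm{pt})$ is surjective. Lazard's theorem then makes the composite $\mbb{L}^*\to MU^*(\mrm{pt})\to H_*(BU)$ injective. This yields the isomorphism and the injectivity of the Hurewicz map at the same time. Alternatively, Milnor's Adams spectral sequence computation gives torsion-freeness directly.

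So your route and Quillen's coincide at the final step and differ only in how torsion-freeness is obtained.
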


\begin{rmk}
    Let $X$ be a compact complex manifold of complex dimension $n$. The Chern number $c_{(n)}(X)= \langle c_n(T_X), [X]\rangle  $ is equal to the Euler characteristic of $X$. If $X=W_\mcal{H}(\mbb{C})$, then, as $W_\mcal{H}(\mbb{C})$ has no odd cohomology, $c_{(n)}(W_\mcal{H})= \chi(W_\mcal{H}) = \dim H^*(W_{\mcal{H}}(\mbb{C}), \mbb{Q})$ which is an invariant of the matroid $M_\mcal{H}$ of $W_\mcal{H}$. The combinatorial study of this invariant is of recent interest \cite{FMSV24}.
\end{rmk}

The total Chern class of $T_{W_\mathcal{H}}$ was recently given an explicit formula which depends only on the matroid $M_\mcal{H}$ in the work of Cheng \cite{Cheng25}. The total Chern class of $T_{W_\mathcal{H}}$ is closely related to the Chern classes of the tautological bundle $\mathcal{Q}_{\mcal{H}}$ on the permutahedral variety \cite{BEST} and the Poincar\'e dual of the Chern-Schwartz-MacPherson class of the complement $W_{\mathcal{H}}\setminus \mathcal{H}$ \cite{AL24}. We now recall Cheng's formula using the presentation for $H^{2*}(W_{\mcal{H}}(\mbb{C}))$ given by \Cref{rmk:chow} and the cycle class isomorphism $\CH^*(W_{\mcal{H}})\simeq H^{2*}(W_{\mcal{H}}(\mbb{C}))$. This presentation agrees with the one given by Feichtner and Yuzvinsky in \cite{FY04}.

\begin{defn}
    For an integer $k\geq 0$, let \[S_{k,M_\mcal{H}} := \sum_{G \text{ a rank $r-k$ flat of $M_\mcal{H}$}} x_G \in H^2(W_{\mcal{H}}(\mbb{C})).\]
    Note that $S_{0,M_\mcal{H}} = x_E$, which is the class denoted by $-\alpha$ in \cite{AHK18}.
\end{defn}

\begin{theo}[Theorem 3.4 of \cite{Cheng25}]\label{theo:Cheng}
The total Chern class of the tangent bundle of $W_{{\mcal{H}}}(\mbb{C})$ is equal to 
\[c(T_{W_{\mcal{H}}(\mbb{C})})= \prod_{i=1}^{r-1} (1+S_{i,M_\mcal{H}}) \cdot \prod_{j=0}^{r-1}\left(1-\sum_{k=0}^j S_{k,M_\mcal{H}}\right). \]
\end{theo}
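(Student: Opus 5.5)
The plan is to split $c(T_{W_\mcal{H}(\mbb{C})})$ along the simple normal crossings boundary. The first product $\prod_{i=1}^{r-1}(1+S_{i,M_\mcal{H}})$ should be the contribution of the boundary divisors, and the second product should be the total Chern class of the logarithmic tangent bundle $T_{W_\mcal{H}}(-\log D)$, where $D=\sum_G D_G$ runs over the proper non-empty flats $G$.

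\emph{Step 1 (boundary factor).} Since $D$ is simple normal crossings, there is the residue exact sequence
\[0\to T_{W_\mcal{H}}(-\log D)\to T_{W_\mcal{H}}\to \bigoplus_{G} \mcal{O}_{D_G}(D_G)\to 0.\]
The sequence $0\to\mcal{O}\to\mcal{O}(D_G)\to\mcal{O}_{D_G}(D_G)\to 0$ shows that $c(\mcal{O}_{D_G}(D_G))=1+x_G$. This gives
\[c(T_{W_\mcal{H}})=c(T_{W_\mcal{H}}(-\log D))\prod_G(1+x_G).\]
Group the flats by corank $i\in\{1,\dots,r-1\}$. Two distinct flats of the same rank are incomparable, so $x_Gx_{G'}=0$ in the presentation of \Cref{rmk:chow}. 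Hence $\prod_{\mrm{rk} G=r-i}(1+x_G)=1+S_{i,M_\mcal{H}}$, which is exactly the first product in \Cref{theo:Cheng}.

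\emph{Step 2 (log tangent factor).} It remains to show
\[c(T_{W_\mcal{H}}(-\log D))=\prod_{j=0}^{r-1}\Bigl(1-\sum_{k=0}^j S_{k,M_\mcal{H}}\Bigr).\]
The right side has $r$ factors while the bundle has rank $r-1$. This suggests realizing $T_{W_\mcal{H}}(-\log D)$ as a quotient $\mcal{V}/\mcal{O}$, where $\mcal{V}$ is a rank $r$ bundle filtered by line bundles $\mcal{L}_0,\dots,\mcal{L}_{r-1}$ with $c_1(\mcal{L}_j)=-\sum_{k\le j}S_{k,M_\mcal{H}}$.

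The model case is $\mbb{P}(L)$ minus $r+1$ generic hyperplanes, where the log tangent bundle is trivial. In general I would use the embedding of $W_\mcal{H}$ into $\prod_G\mbb{P}(L/L_G)$ and the tautological line bundles pulled back from the factors. First check that $c_1$ of the pullback of $\mcal{O}(-1)$ from $\mbb{P}(L/L_G)$ equals $-\sum_{H\supseteq G}x_H$, which is the analogue of the element $\phi(h_G)$ in \Cref{thrm:simplicial}. Then sum over flats of rank $r-k$, telescoping in the rank, to assemble the layers $\mcal{L}_j$. Equivalently, and perhaps more cleanly, one can use the known identification from \cite{BEST} of $\Omega^1_{W_\mcal{H}}(\log D)$ with a restriction of the tautological quotient bundle $\mcal{Q}$ on the permutahedral variety. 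The tautological bundle has an explicit filtration by rank, which produces the factors $1-\sum_{k\le j}S_{k,M_\mcal{H}}$.

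\emph{Main obstacle.} The hardest step is Step 2: producing the filtration of the log tangent (or log cotangent) bundle and correctly identifying each graded piece's first Chern class in the Feichtner--Yuzvinsky generators. If the global filtration is hard to construct directly, I would fall back on induction along the blowup sequence in the proof of \Cref{lem:cycle-class}. Each center is the strict transform of $\mbb{P}(L_G)$, which meets the current boundary transversally, so the log tangent bundle changes by an elementary modification along the new exceptional divisor. I would then check that the product formula transforms accordingly under the blowup formula for Chern classes. This reduces to a combinatorial identity among the $S_k$, using the relations $\mcal{I}_1^M$ and $\mcal{I}_2^M$ specialized to the additive formal group law.
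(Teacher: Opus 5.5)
The paper gives no proof of this statement; it quotes it from \cite{Cheng25}, so your argument has to stand on its own. Your Step 1 is correct. The residue sequence, $c(\mcal{O}_{D_G}(D_G))=1+x_G$, and $x_Gx_{G'}=0$ for distinct flats of equal rank give exactly $\prod_{i=1}^{r-1}(1+S_{i,M_\mcal{H}})$. The resulting reduction is consistent: for $n$ points on $\mbb{P}^1$, with $S_{0,M_\mcal{H}}=x_E=-\alpha$, both sides of the remaining identity equal $1+(2-n)[p]$, where $[p]$ is the class of a point. But the remaining identity $c(T_{W_\mcal{H}}(-\log D))=\prod_{j=0}^{r-1}\bigl(1-\sum_{k\le j}S_{k,M_\mcal{H}}\bigr)$ is the whole content of the theorem, and Step 2 does not establish it. No bundle $\mcal{V}$ or filtration is constructed, and none of your three routes is carried through to the factorization.

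Each route also has a concrete defect.

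(a) With $x_E=-\alpha$, it is the pullback of $\mcal{O}(1)$, not $\mcal{O}(-1)$, from $\mbb{P}(L/L_G)$ that has $c_1=-\sum_{H\supseteq G}x_H$; take $G=E$ to see this. Summing these classes over flats of rank $s$ gives $-\sum_H N_s(H)\,x_H$, where $N_s(H)$ is the number of rank-$s$ flats contained in $H$. This is not $-\sum_{\mrm{rk}(H)\ge s}x_H$. For $r=2$ the atoms contribute $0$, while $-S_{0,M_\mcal{H}}-S_{1,M_\mcal{H}}=(1-n)[p]$. Your model case is also off by one. The complement of $r+1$ generic hyperplanes in $\mbb{P}(L)\simeq\mbb{P}^{r-1}$ is not a torus: $\mbb{P}^1$ minus three points has $T(-\log D)=\mcal{O}(-1)$. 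The toric case is the Boolean matroid, where the second product must equal $1$ in the Chow ring of the permutohedral variety, and that already needs proof.

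(b) $\Omega^1_{W_\mcal{H}}(\log D)$ has rank $r-1$ while $\mcal{Q}$ has rank $|E|-r$, so the identification you quote cannot hold. What is true is the following. By \Cref{lem:intersection}, $W_\mcal{H}$ is transverse to the toric boundary of $X_{M_\mcal{H}}$, so $0\to T_{W_\mcal{H}}(-\log D)\to\mcal{O}_{W_\mcal{H}}^{|E|-1}\to N_{W_\mcal{H}/X_{M_\mcal{H}}}\to 0$ is exact. Hence $c(T_{W_\mcal{H}}(-\log D))=c(N_{W_\mcal{H}/X_{M_\mcal{H}}})^{-1}$. It is this normal bundle that \cite{BEST} relate, up to their conventions, to the restricted tautological quotient bundle. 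That is the correct form of your $\mcal{V}/\mcal{O}$ heuristic, with $\mcal{V}$ the restricted tautological subbundle. You would then still have to compute $c(N_{W_\mcal{H}/X_{M_\mcal{H}}})^{-1}$ in Feichtner--Yuzvinsky generators and prove that it factors as claimed. That computation is exactly what is missing.

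(c) In the blowup induction, the intermediate boundaries are in general not simple normal crossings. Each center lies inside the strict transforms of the $\mbb{P}(H_i)$ with $i\in G$. The classes $S_{k,M_\mcal{H}}$ are not defined on the intermediate varieties. So neither the elementary modification nor the ``combinatorial identity'' you appeal to is actually formulated.
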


\begin{theo}\label{theo:U-cobordant}
The complex cobordism class $[W_\mcal{H}(\mbb{C}) \to \mrm{pt}]\in MU^*(\mrm{pt})$ of the wonderful compactification $W_{\mcal{H}}$ of a complex hyperplane arrangement $\mcal{H}$ depends only on the matroid $M_\mcal{H}$ of $\mcal{H}$. Similarly, the algebraic cobordism class $[W_\mcal{H}\to \mrm{pt}]\in \Omega^*(\mrm{pt})$ depends only on the matroid $M_{\mcal{H}}$ of $\mcal{H}$. 
\end{theo}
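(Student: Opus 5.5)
By \Cref{theo:cobordant-manifolds}, the complex cobordism class $[W_\mcal{H}(\mbb{C})\to \mrm{pt}]\in MU^*(\mrm{pt})$ is determined by the Chern numbers $c_\lambda(W_\mcal{H}(\mbb{C}))$. So I would prove the first claim by showing that every Chern number is a matroid invariant. The algebraic statement would then follow from the fact that the comparison map $\vartheta_{\mrm{pt}}\colon \Omega^*(\mrm{pt})\to MU^{2*}(\mrm{pt})$ is an isomorphism (both are $\mbb{L}^*$; cf.\ \Cref{egg-complex-cobordism}). This map sends $[W_\mcal{H}\to\mrm{pt}]$ to $[W_\mcal{H}(\mbb{C})\to\mrm{pt}]$, because $\vartheta$ commutes with pushforward and sends $1_{W_\mcal{H}}$ to $1_{W_\mcal{H}(\mbb{C})}$. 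Hence two arrangements with the same matroid have equal classes in $MU^*(\mrm{pt})$, and therefore equal classes in $\Omega^*(\mrm{pt})$.

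To show the Chern numbers depend only on $M_\mcal{H}$, I would work in $H^{2*}(W_\mcal{H}(\mbb{C}))$. By \Cref{lem:cycle-class} and \Cref{theo:iso-pullback}, this ring is identified with $\CH^*(X_{M_\mcal{H}})$, whose Feichtner--Yuzvinsky presentation (\Cref{rmk:chow}) depends only on $M_\mcal{H}$. Under this identification the classes $x_G$ correspond to the divisor classes $[D_G]$. By \Cref{theo:Cheng}, the total Chern class $c(T_{W_\mcal{H}})$ is a polynomial in the $S_{k,M_\mcal{H}}$, hence in the $x_G$, given by a formula depending only on $M_\mcal{H}$. Extracting homogeneous components, each $c_i(T_{W_\mcal{H}})$ is a matroid-determined polynomial in the $x_G$, and so is each product $c_{\lambda_1}\cdots c_{\lambda_r}$ lying in top degree $H^{2(r-1)}$.

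It remains to check that the pairing with the fundamental class, $\langle\,\cdot\,,[W_\mcal{H}]\rangle\colon H^{2(r-1)}\to\mbb{Z}$, is a matroid invariant when expressed through the presentation. The top-degree group is $\mbb{Z}$, generated for instance by the monomial $x_{G_1}\cdots x_{G_{r-1}}$ for a maximal flag $G_1\subsetneq\cdots\subsetneq G_{r-1}$ of proper flats. Geometrically this monomial is the class of the transverse intersection $D_{G_1}\cap\cdots\cap D_{G_{r-1}}$, which is a single reduced point, since the boundary is a simple normal crossings divisor and the $D_G$ for a flag meet in a point. So the degree sends every flag monomial to $1$. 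The degree map is therefore the unique map sending flag monomials to $1$, which is the matroid degree map of \cite{AHK18}, and the Chern numbers are combinatorial.

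The main obstacle is the bookkeeping in this last step. Cheng's formula is phrased in the classes $x_G$ including $x_E$, which must be matched with the geometric classes under the Feichtner--Yuzvinsky identification, including sign conventions for $x_E=-\alpha$. I would also need to justify carefully that the intersection of the $D_G$ over a complete flag is a single reduced point, which follows from the local description of $W_\mcal{H}$ in \cite{DP95}.
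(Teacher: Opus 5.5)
Your proposal is correct and follows the same route as the paper. You use Quillen's Chern-number criterion (\Cref{theo:cobordant-manifolds}), then matroid-invariance of the Chern numbers via Cheng's formula and the Feichtner--Yuzvinsky description of $\CH^*(W_{\mcal{H}})\simeq H^{2*}(W_{\mcal{H}}(\mbb{C}))$, and finally transfer to $\Omega^*(\mrm{pt})$ through the comparison isomorphism $\Omega^*(\mrm{pt})\to MU^{2*}(\mrm{pt})$, for which the paper cites \cite[Corollary 1.2.11]{LM07}. Your check that the degree map is fixed by flag monomials having degree $1$ spells out a point the paper leaves implicit; however, ``both are $\mbb{L}^*$'' alone does not show that $\vartheta_{\mrm{pt}}$ itself is an isomorphism, so you need its compatibility with the maps classifying the formal group laws, or the cited corollary.
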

\begin{proof}
As the ring structure of $\CH^*(W_{\mcal{H}})$ and the total Chern class $c(T_{W_{\mcal{H}}})$ depend only on $M_\mcal{H}$, the Chern numbers of $W_{\mcal{H}}$ depend only on $M_\mcal{H}$. This observation, combined with \Cref{theo:cobordant-manifolds}, yields the claim for $[W_{\mcal{H}}(\mbb{C})\to \mrm{pt}]\in MU^*(\mrm{pt})$. That the claim also holds for $[W_{\mcal{H}}\to \mrm{pt}]\in \Omega^*(\mrm{pt})$ follows from the fact that the $\mbb{L}^*$-algebra homomorphism $\Omega^*(\mrm{pt})\to MU^{2*}(\mrm{pt})$ which sends a class $[X\to \mrm{pt}]\in \Omega^*(\mrm{pt})$ to $[X(\mbb{C})\to \mrm{pt}] \in MU^*(\mrm{pt})$ is an isomorphism \cite[Corollary 1.2.11]{LM07}.
\end{proof}

\begin{rmk}\label{rmk:future-work}
It would be interesting to give an explicit formula for the class $[W_{\mcal{H}}\to \mrm{pt}] \in \Omega^*(\mrm{pt})$ that depends only on the matroid $M_{\mcal{H}}$. It would also be interesting to give a complete description of the proper push-forward map $\Omega^*(W_{\mcal{H}})\to \Omega^*(\mrm{pt})$. Upon doing so, this may generalize to yield an artificial ``cobordism class'' $[X_M]\in \Omega^*(\mrm{pt})$ of a matroid $M$ which agrees with the class $[W_{\mcal{H}}\to \mrm{pt}]\in\Omega^*(\mrm{pt})$ when $M$ is realizable by $\mcal{H}$ over $\mbb{C}$, and also yield an $\mbb{L}^*$-module homomorphism $\Omega^*(X_M) \to \Omega^*(\mrm{pt})$ which agrees with the proper push-forward map $\Omega^*(W_{\mcal{H}})\to \Omega^*(\mrm{pt})$ when $M$ is realizable by $\mcal{H}$ over $\mbb{C}$. A similar program was carried out for $K$-theory in \cite{LLPP24}. An explicit recursive formula for the algebraic cobordism class $[\overline{\mcal{M}_{0,n}} \to \mrm{pt}]$ of the moduli space of stable genus zero curves with $n$ marked points is given in \cite{EB26} in terms of the generators of the Lazard ring $\mbb{L}^*$. The space $\overline{\mcal{M}_{0,n}}$ is the wonderful compactification of a hyperplane arrangement with respect to a non-maximal building set, namely the minimal building set \cite[Section 4.3]{DPwonderful}. Generalizing the techniques developed in \cite{EB26} to give a recursive formula for the algebraic cobordism class of a wonderful variety $[W_\mcal{H}\to\mrm{pt}]$ with respect to a maximal building set is work in progress of the first author.
\end{rmk}

\printbibliography
	
\end{document}